\documentclass[11pt]{amsart} 
\usepackage{amsmath,amsfonts,amssymb,amsthm,amscd}
\usepackage{color, hyperref, graphicx}
\usepackage{dsfont}
\usepackage[all,color]{xy}

\newtheorem{theorem}{Theorem}[section]

\newtheorem{prop}[theorem]{Proposition}

\newtheorem{fact}[theorem]{Fact}
\newtheorem{cor}[theorem]{Corollary}
\newtheorem{proposition}[theorem]{Proposition}
\newtheorem{lemma}[theorem]{Lemma}

\theoremstyle{definition}

\theoremstyle{remark}
\newtheorem{rem}[theorem]{Remark}
\newtheorem{remark}[theorem]{Remark}

\newcommand{\N}{\mathbb{N}}

\newcommand{\R}{\mathbb{R}}       
\newcommand{\C}{\mathbb{C}}      

\newcommand{\K}{\mathbb{K}} 
              

\newcommand{\ff}{\varphi}                   


\newcommand{\sub}{\subseteq}

\newcommand{\CM}{\mathcal M}

\global\long\def\GL{\mathrm{GL}}
\global\long\def\id{\mathrm{id}}

\DeclareMathOperator{\rank}{rank}

\newcommand{\suchthat}{\;\ifnum\currentgrouptype=16 \middle\fi|\;}

\title{Locally  $\C$-Nash groups}
\author{El\'ias Baro}
\address{Departamento de \'Algebra, Facultad de Matem\'aticas, Universidad Complutense de Madrid, 
28040 Madrid, Spain. 
E-mail address: ebaro@ucm.es}
\author{Juan de Vicente}
\address{Departamento de Matem\'aticas, Universidad Aut\'onoma de Madrid, 28049 Madrid, Spain.
E-mail address: juan.devicente@uam.es}
\author{Margarita Otero}
\address{Departamento de Matem\'aticas, Universidad Aut\'onoma de Madrid, 28049 Madrid, Spain. 
E-mail address: margarita.otero@uam.es}

\thanks{The first and third authors are partially supported by  Spanish MTM2014-55565-P and Grupos UCM 910444. Second author also supported by a grant of the International
Program of Excellence in Mathematics at Universidad Aut\'onoma de Madrid.}
\subjclass[2010]{Primary 14P10; Secondary 14P20.}

\keywords{Semialgebraic set, Nash map, locally $\C$-Nash group, algebraic group}

\date{1 August 2018}

\begin{document}
\begin{abstract} We introduce and study the category of locally $\mathbb{C}$-Nash groups, basic examples of such groups are complex algebraic groups. We prove that if two complex algebraic groups are  locally $\C$-Nash isomorphic then they also are biregularly isomorphic. We also show that  both, abelian locally Nash   and abelian locally $\mathbb{C}$-Nash groups, can be characterised via
meromorphic maps admitting an algebraic addition theorem;  we give an invariant of such groups associated to the groups of periods of a chart at the identity. Finally, we prove that the category of simply connected  abelian locally $\mathbb{C}$-Nash groups coincides with that of universal coverings of the abelian complex irreducible algebraic groups  (a complex version of  a  result of Hrushovski and Pillay in \cite{Hrushovski_Pillay, Hrushovski_Pillay_Errata}). 
\end{abstract}
\maketitle
\section{Introduction}
The (real) locally Nash category was first introduced by M.\,Shiota in \cite{Shiota1},  and it was used in the context of groups by J.J.\,Madden and C.M.\,Stanton \cite{Madden_Stanton} (see also  \cite{Shiota2} and \cite{Kawakami96}) to study universal coverings of Nash groups and  to  classify one-dimensional Nash groups. In particular, a category where universal coverings  of real algebraic groups could be defined. Later, E. Hrushovski and A. Pillay showed (\cite{Hrushovski_Pillay, Hrushovski_Pillay_Errata} based on Hrushovski's group configuration theorem \cite{Hrushovski}) that locally Nash groups fulfilled this role perfectly, since they are precisely quotients of universal coverings of 
real algebraic groups by discrete subgroups.  Therefore, a classification of locally Nash groups will provide a first step towards a classification of Nash groups and of real algebraic groups.  

We will consider the analogous category in the complex case.
Such category is based on the notion of $\mathbb{C}$-Nash map, firstly considered by \L ojasiewicz, and then applied  in the solution of approximation 
problems by J.\,Adamus and S.\,Randriambololona \cite{Adamus_Randriambololona}, A.\,Tancredi and A.\,Tognoli \cite{Tancredi_Tognoli} 
and others.
Generalizations of this concept for algebraically closed fields of characteristic $0$ have also been considered by Y.\,Peterzil and 
S.\,Starchenko \cite{Peterzil_Starchenko_1,Peterzil_Starchenko_2} and M.\,Knebusch and R.\,Huber \cite{Huber_Knebusch,Knebusch}. Complex Nash groups were also considered by P.\,Flondor and  F.\,Guaraldo in \cite{FlodorG04}.

Many of the results  we prove are common to both  the Nash and $\C$-Nash  categories, thus,  we have chosen to  speak  about  $\mathbb{K}$-Nash when   talking  about both cases simultaneously.  Paying  attention to both cases will enable  us to apply the results of this paper  to the classification of  two-dimensional abelian locally $\K$-Nash groups in \cite{BDO2ALKNG}.

The main results of the paper are the following. We first introduce the category of locally $\mathbb{C}$-Nash groups, examples of such groups are  complex algebraic groups. We prove that the category of algebraic groups is a full subcategory of the category of locally $\C$-Nash groups (Corollary\,\ref{ALGisomorfismo}). Then, we concentrate on  the abelian case. We show that  abelian locally $\mathbb{K}$-Nash groups can be characterised via global
meromorphic maps admitting an algebraic addition theorem (Theorem\,\ref{T1});  we study an invariant associated to the groups of periods of a chart at the identity (Proposition\,\ref{different ranks}), and finally we prove  a complex version of the mentioned result of Hrushovski and Pillay. Namely, the category of simply connected  abelian locally $\mathbb{C}$-Nash groups equals that of universal coverings of the abelian complex irreducible algebraic groups (Corollary\,\ref{equalcat}). 

As it is mentioned in \cite{Hrushovski_Pillay} for Nash groups, locally Nash $\K$-groups  might fold into the henselian groups category introduced by Perrin (see \cite{Perrin}). If this would be the case, Corollary\,\ref{equalcat} (as well as results  in \cite{Hrushovski_Pillay}) could follow from the work of Perrin. Here, we prove Corollary\,\ref{equalcat} via an extension result from \cite{BDOAAT} that provides a characterisation of simply connected abelian locally  Nash groups (see Theorem\,\ref{T1} and Fact 4.1), and which cannot be deduced from \cite{Perrin}. Having that theorem in hand our proof is elementary,  and it does not make use of neither model theoretic techniques nor schemes, as the work in \cite{Hrushovski_Pillay} and \cite{Perrin}, respectively, do.

The results of this paper are part of the second author's Ph.D. thesis \cite{tesis}.

\smallskip

\emph{All along this paper $\mathbb{K}$ will be $\mathbb{C}$ or $\mathbb{R}$. When we refer to semialgebraic subsets of $\mathbb{C}^n$, we mean semialgebraic as subsets of $\mathbb{R}^{2n}$ 
and similarly for semialgebraic maps. We recall  that -- by definition -- a semialgebraic map has a semialgebraic domain of definition.}

\section{Locally $\K$-Nash manifolds}\label{LKNmap}
Let $U$, $V$ and $W$ be open subsets of $\mathbb{K}^p$ with $W\sub U\cap V$.
Given maps $f:U\rightarrow \mathbb{K}^m$ and $g:V\rightarrow \mathbb{K}^n$ we say that $f$ is \emph{algebraic} over 
$\mathbb{K}(g):=\mathbb{K}(g_1,\ldots ,g_n)$ on $W$ if each of its components $f_1,\ldots ,f_m$ is algebraic over $\mathbb{K}(g)$ on $W$, that is,  
for each $i\in \{1,\ldots ,m\}$ there exists a polynomial 
$P_i\in \mathbb{K}[X_1,\ldots ,X_n,Y]$ of positive degree in $Y$ such that 
\[
P_i(g_1(u),\ldots ,g_n(u), f_i(u))=0 \text{ for each } u\in W. 
\]
In the real context, Nash maps are those that are both smooth and semialgebraic.
Alternatively, Nash maps are those whose domain of definition is an open semialgebraic set and that are both analytic and algebraic 
over $\mathbb{R}(\id)$ (see, \emph{e.g.}, J. Bochnak, M. Coste, M.-F. Roy \cite[Prop.\,8.1.8]{Bochnak_Coste_Roy}).
This definition can be extended to the complex case as follows.

Let $U$ be an open subset of $\mathbb{K}^n$. 
We say that a map $f:U\rightarrow \mathbb{K}^m$ is a \emph{$\mathbb{K}$-Nash map} if $U$ is a semialgebraic, $f$ is analytic (as a $\mathbb{K}$-map)
and algebraic over $\mathbb{K}(\id)$ on $U$.

We remark that although the real and complex case have analogous definitions and share similar properties, 
there are also some subtle differences, because the complex case is more rigid than the real one.
For example, in the real case any semialgebraic function becomes a Nash function, when restricted to an adequate open dense subset of its domain of definition.

\begin{fact}{\cite[2.4.1]{Fernando_Gamboa_Ruiz}}\label{Nash cell decomposition}
Let $U$ be an open subset of $\mathbb{R}^n$ and let $f:U\rightarrow \mathbb{R}^m$ be a semialgebraic map.
Then, there exists an open dense subset $V\sub U$ such that $f:V\rightarrow \mathbb{R}^m$ is a Nash map.
\end{fact}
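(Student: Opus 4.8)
The plan is to reduce to a scalar-valued $f$ and then produce, on a suitable open dense subset, the two properties that define a Nash map: analyticity and algebraicity over $\R(\id)$. First I would reduce to $m=1$. Writing $f=(f_1,\dots,f_m)$ with each $f_i\colon U\to\R$ semialgebraic, if I find for every $i$ an open dense $V_i\sub U$ on which $f_i$ is Nash, then $V=\bigcap_{i=1}^m V_i$ is open, and dense because each $U\setminus V_i$ is nowhere dense and a finite union of nowhere dense sets is nowhere dense; on $V$ the map $f$ is then Nash. So I may assume $f\colon U\to\R$.

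Algebraicity is essentially free. The graph $\Ga=\{(x,f(x)):x\in U\}$ is a semialgebraic subset of $\R^{n+1}$, and since the projection $\Ga\to U$ is a semialgebraic bijection onto an $n$-dimensional set, $\dim\Ga=n$; hence $\Ga$ is not Zariski dense and there is a nonzero $P\in\R[X_1,\dots,X_n,Y]$ vanishing on $\Ga$. Were $\deg_Y P=0$, this $P$ would vanish on the open set $U$ and hence be zero, so necessarily $\deg_Y P\ge 1$. Taking $P$ irreducible, $P(x,f(x))=0$ on $U$ is exactly the algebraicity of $f$ over $\R(\id)$. It remains to secure analyticity on a dense open set.

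For this I would invoke the structure theory of semialgebraic sets: by cell decomposition, $f$ is continuous on an open dense subset $U_0\sub U$ with $\dim(U\setminus U_0)<n$. On $U_0$ consider $W=\{x\in U_0:\partial_Y P(x,f(x))\neq 0\}$, which is open. At each point of $W$ the analytic implicit function theorem, applied to the polynomial relation $P(x,y)=0$, furnishes a local analytic solution $y=\varphi(x)$; since $f$ is continuous and satisfies the same relation, $f=\varphi$ locally by uniqueness, so $f$ is analytic on $W$. Thus $f$ is Nash on $W$, and the only thing left is to show that $W$ is dense in $U$.

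The main obstacle is precisely this density, i.e.\ bounding the critical locus. The complement satisfies $U\setminus W\sub (U\setminus U_0)\cup\pi\bigl(\Ga\cap\{\partial_Y P=0\}\bigr)$, where $\pi$ forgets the last coordinate; the first set has dimension $<n$. For the second, irreducibility of $P$ with $\deg_Y P\ge 1$ forces $P\nmid\partial_Y P$, so $P$ and $\partial_Y P$ are coprime; hence the complex variety $\{P=0,\ \partial_Y P=0\}$ has complex dimension $\le n-1$, and its real points have real dimension $\le n-1$. Since $\pi$ is injective on $\Ga$, the projected set also has dimension $<n$, so $U\setminus W$ has empty interior and $W$ is open dense. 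The delicate points are the passage from real to complex irreducibility (to guarantee coprimality of $P$ and $\partial_Y P$ over $\C$) and the estimate $\dim_\R(\text{real points})\le\dim_\C$; these are exactly where the real case is more subtle than the complex one, while the rest is routine.
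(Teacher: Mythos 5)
The paper offers no proof of this statement: it is quoted verbatim from \cite[2.4.1]{Fernando_Gamboa_Ruiz}, so your argument can only be judged on its own merits. Your overall route --- reduce to $m=1$, get a polynomial relation from the fact that the graph is a semialgebraic set of dimension $n$ in $\R^{n+1}$, pass to the continuity locus by cell decomposition, and then use the implicit function theorem off the zero set of $\partial_Y P$ --- is the standard one (it parallels the proof of \cite[Prop.\,8.1.8]{Bochnak_Coste_Roy}, which the paper does invoke for Proposition\,\ref{complex Nash}), and the skeleton is sound.

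There is one genuine flaw in the write-up: the step ``taking $P$ irreducible'' is not available in general. The Zariski closure of the graph need not be irreducible (take $U=\R$, $f=0$ on $x\le 0$ and $f=1$ on $x>0$: the closure is $\{Y(Y-1)=0\}$, whose ideal contains no irreducible polynomial), so you cannot choose a single irreducible $P$ vanishing on all of $\Gamma$. This matters because your density argument hinges on $\gcd(P,\partial_Y P)=1$, which fails if $P$ has a repeated factor. The repair is cheap but should be stated: take $P$ squarefree (e.g.\ of minimal degree among nonzero polynomials vanishing on $\Gamma$), and write $P=Q(X)\,R(X,Y)$ with $Q$ collecting the irreducible factors of $Y$-degree zero. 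Then $R$ is squarefree with every factor of positive $Y$-degree, so no irreducible factor of $R$ divides $\partial_Y R$ and hence $\gcd(R,\partial_Y R)=1$ (over $\R$, hence over $\C$, since conjugate factors pair up); meanwhile $\{P=0,\ \partial_Y P=0\}\subseteq\{Q=0\}\cup\{R=0,\ \partial_Y R=0\}$, and $\pi\bigl(\Gamma\cap\{Q=0\}\bigr)\subseteq\{x: Q(x)=0\}$ already has dimension $<n$. With that substitution your bound on the critical locus, and hence the density of $W$, goes through. Two further points you use implicitly and should record: the set $W$ you produce is semialgebraic (continuity loci of semialgebraic maps are semialgebraic, and $\partial_Y P(x,f(x))\neq 0$ is a semialgebraic condition), which is needed since the paper's definition of a Nash map requires a semialgebraic domain; and projections of semialgebraic sets never raise dimension, so the injectivity of $\pi$ on $\Gamma$ is not actually needed.
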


In contrast with the real case, the following well known result holds only for the complex case.

\begin{fact}[{\cite[Ch.III,\,B.13]{Gunning_Rossi}}]\label{analytic algebraic map}
Let $U$ be an open subset of $\mathbb{C}^n$ and let $f:U\rightarrow \mathbb{C}^m$ be continuous and algebraic over $\mathbb{C}(\id)$.
Then $f$ is analytic.
In particular, if $U$ is a semialgebraic set then $f$ is a $\mathbb{C}$-Nash map.
\end{fact}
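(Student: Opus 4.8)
The statement is quoted from \cite{Gunning_Rossi}, but here is how I would prove it directly. Since analyticity is checked componentwise, it suffices to treat the case $m=1$, so fix $f\colon U\to\C$ continuous with a polynomial relation $P(z,f(z))=0$ on $U$, where $P(z,Y)=\sum_{j=0}^{d}a_j(z)Y^{j}\in\C[z_1,\dots,z_n][Y]$ has $a_d\not\equiv 0$ and $d\ge 1$. The first reduction is to arrange that $P$ is squarefree as a polynomial in $Y$ over the field $K=\C(z_1,\dots,z_n)$: replacing $P$ by its radical $Q=P/\gcd(P,\partial P/\partial Y)$ (computed in $K[Y]$ and cleared of denominators), the polynomials $Q(z,\cdot)$ and $P(z,\cdot)$ have the same roots for all $z$ outside a proper algebraic subset, so $Q(z,f(z))=0$ on a dense open subset of $U$, and hence on all of $U$ by continuity of $f$. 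Thus I may assume from the start that $P$ is squarefree over $K$, which guarantees that its discriminant $\Delta=\mathrm{disc}_Y P\in\C[z]$ is not identically zero.

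Now set $A=\{z\in U: a_d(z)=0\}\cup\{z\in U:\Delta(z)=0\}$. This is contained in the zero set of the single nonzero polynomial $a_d\Delta$, hence is a (complex) analytic subset of $U$ of codimension at least one, and its complement $U\setminus A$ is open and dense. At any point $z_0\in U\setminus A$ the fibre polynomial $P(z_0,\cdot)$ has only simple roots and $a_d(z_0)\neq 0$, so $(\partial P/\partial Y)(z_0,f(z_0))\neq 0$. The holomorphic implicit function theorem then produces a holomorphic function $g$ on a neighbourhood of $z_0$ with $g(z_0)=f(z_0)$ and $P(z,g(z))\equiv 0$, and $g$ is the unique root of $P(z,\cdot)$ near $f(z_0)$; since $f$ is continuous and also a root, $f$ coincides with $g$ near $z_0$. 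Therefore $f$ is holomorphic on $U\setminus A$.

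It remains to extend analyticity across $A$. Here I would invoke Riemann's removable singularity theorem in several variables: a function holomorphic on $U\setminus A$, where $A$ is an analytic set of codimension at least one, and locally bounded near $A$, extends holomorphically to all of $U$. The hypothesis that $f$ is continuous supplies exactly the required local boundedness, so $f$ is analytic on $U$, proving the first assertion. For the final sentence, recall that by definition a $\C$-Nash map is one with semialgebraic domain that is both analytic and algebraic over $\C(\id)$; we are given the algebraicity, we have just established analyticity, and $U$ is assumed semialgebraic, so $f$ is $\C$-Nash. The one genuinely delicate point is the squarefree reduction: one must check that passing to the radical $Q$ does not destroy the relation $Q(z,f(z))=0$, which is where continuity of $f$ together with the density of the good locus is used, and which is also what makes the discriminant nonvanishing --- the step that keeps the exceptional set $A$ thin enough for Riemann extension to apply.
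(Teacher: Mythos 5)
Your argument is correct. Note, however, that the paper does not prove this statement at all: it is stated as a \emph{Fact} and attributed to Gunning--Rossi, so there is no in-paper proof to compare against. Your proof is essentially the standard one for this classical result: reduce to a squarefree defining polynomial so that the discriminant is not identically zero, apply the implicit function theorem together with continuity of $f$ to get holomorphy off the thin algebraic set where the leading coefficient or discriminant vanishes, and then extend across that set by the Riemann removable singularity theorem using local boundedness. The only step worth a sentence more is the very last one: Riemann's theorem produces a holomorphic extension $\tilde f$ of $f|_{U\setminus A}$, and one should observe that $\tilde f=f$ on all of $U$ because both are continuous and agree on the dense set $U\setminus A$; with that remark the proof is complete.
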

We begin by   showing  that the conditions of being algebraic over $\mathbb{K}(\id)$ and being semialgebraic are equivalent 
for analytic functions with a semialgebraic domain of definition.

\begin{proposition}\label{complex Nash}
Let $U$ be an open subset of $\mathbb{K}^n$. 
Then $f:U\rightarrow \mathbb{K}^m$ is both analytic and semialgebraic if and only if $f$ is a $\mathbb{K}$-Nash map.
\end{proposition}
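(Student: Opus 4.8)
The plan is to first strip the statement down to its essential content. Both ``analytic and semialgebraic'' and ``$\mathbb{K}$-Nash'' include analyticity, and a semialgebraic map has a semialgebraic domain by convention, so what really has to be shown is that, for an analytic $f$ defined on a semialgebraic open set $U$, being semialgebraic is equivalent to being algebraic over $\mathbb{K}(\id)$. When $\mathbb{K}=\mathbb{R}$ this is exactly the cited characterisation of Nash maps \cite[Prop.\,8.1.8]{Bochnak_Coste_Roy}, so the whole point is the case $\mathbb{K}=\mathbb{C}$. There I would view $f\colon U\to\mathbb{C}^m$ as a real-analytic map $U\to\mathbb{R}^{2m}$ with $U\sub\mathbb{R}^{2n}$ (holomorphy gives real-analyticity), writing $z_j=x_j+iy_j$ and $f_i=u_i+iv_i$, so that the coordinate field becomes $\mathbb{C}(x_1,\dots,y_n)=\mathbb{C}(z_1,\dots,z_n,\bar z_1,\dots,\bar z_n)$.

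For the implication $\mathbb{C}$-Nash $\Rightarrow$ semialgebraic I would argue through the real case. If $f_i$ is algebraic over $\mathbb{C}(\id)$, say $P_i(z,f_i(z))=0$ with $P_i\in\mathbb{C}[Z_1,\dots,Z_n,Y]$ of positive degree in $Y$, then conjugating the identity pointwise shows that $\overline{f_i}$ is a root of the polynomial obtained by conjugating the coefficients of $P_i$ and evaluating at $\bar z$; hence $\overline{f_i}$ is algebraic over $\mathbb{C}(\bar z)$. Consequently $u_i=(f_i+\overline{f_i})/2$ and $v_i=(f_i-\overline{f_i})/2i$ are algebraic over $\mathbb{C}(z,\bar z)=\mathbb{C}(x,y)$, and \emph{a fortiori} over $\mathbb{R}(x,y)=\mathbb{R}(\id_{\mathbb{R}^{2n}})$. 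Thus $f$ is a real-analytic map that is algebraic over $\mathbb{R}(\id)$, so it is Nash and in particular semialgebraic by the real case.

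The remaining, and genuinely complex, implication is semialgebraic $\Rightarrow$ algebraic over $\mathbb{C}(\id)$. Running the real case backwards, a semialgebraic analytic $f$ is Nash as a real map, so each $u_i,v_i$, and therefore $f_i$ itself, is algebraic over $\mathbb{R}(x,y)\sub\mathbb{C}(z,\bar z)$. The main obstacle is to remove the dependence on the antiholomorphic coordinates $\bar z_j$, and this is exactly where holomorphy must enter. I would take the monic minimal polynomial $f_i^{\,d}+c_{d-1}f_i^{\,d-1}+\dots+c_0=0$ of $f_i$ over $\mathbb{C}(z,\bar z)$ (with $c_k\in\mathbb{C}(z,\bar z)$) and apply the Wirtinger operator $\partial/\partial\bar z_j$. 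Since $f_i$ is holomorphic, $\partial f_i/\partial\bar z_j=0$, so all terms produced by differentiating powers of $f_i$ vanish and one is left with $\sum_{k=0}^{d-1}(\partial c_k/\partial\bar z_j)\,f_i^{\,k}=0$, a polynomial relation for $f_i$ of degree at most $d-1$. By minimality of $d$ this relation must be trivial, forcing $\partial c_k/\partial\bar z_j=0$ for every $k$ and every $j$; since $z_1,\dots,z_n,\bar z_1,\dots,\bar z_n$ are algebraically independent over $\mathbb{C}$, each $c_k$ then lies in $\mathbb{C}(z_1,\dots,z_n)$. Clearing denominators produces a polynomial $P_i\in\mathbb{C}[Z_1,\dots,Z_n,Y]$ of positive degree in $Y$ with $P_i(z,f_i(z))=0$, i.e.\ $f$ is algebraic over $\mathbb{C}(\id)$. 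I expect this final descent, via the minimal polynomial together with the Cauchy--Riemann equations, to be the crux of the argument, the real-to-complex bookkeeping being routine.
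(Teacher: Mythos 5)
Your proof is correct, but it takes a genuinely different route from the paper's. The paper only proves the implication ``$\mathbb{C}$-Nash $\Rightarrow$ semialgebraic'', citing [FLR87] for the converse, and it does so by a direct semialgebraic-geometry argument: it looks at the real zero set $A$ of $P$ in $\mathbb{R}^{2n}\times\mathbb{R}^2$, stratifies the base by the number of roots, builds semialgebraic root-selection functions $s_j$, and uses a cell decomposition plus a connectedness argument to identify $f$ with one $s_j$ on each cell; notably this works for $f$ merely continuous, not just analytic. You instead reduce that direction to the real case by conjugating the polynomial identity, so that $\overline{f_i}$ is algebraic over $\mathbb{C}(\bar z)$ and hence $u_i,v_i$ are algebraic over $\mathbb{C}(x,y)$, and then over $\mathbb{R}(x,y)$ --- but be careful that this last step is not really ``a fortiori'': it holds because $\mathbb{C}(x,y)$ is itself algebraic over $\mathbb{R}(x,y)$, so you need transitivity of algebraic extensions (and a product with the conjugate polynomial to get real coefficients). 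After that, [Prop.\,8.1.8] of Bochnak--Coste--Roy finishes it, which is cleaner than the paper's cell-by-cell analysis but uses the real proposition in both of its directions rather than only as a black box for $\mathbb{K}=\mathbb{R}$. Your second half --- the Wirtinger-derivative descent applied to the minimal polynomial of $f_i$ over $\mathbb{C}(z,\bar z)$ to kill the antiholomorphic variables --- is a correct and self-contained proof of the implication the paper delegates to [FLR87], so your write-up is actually more complete than the paper's. Two small points to tidy: the minimal-polynomial argument needs $U$ connected (work on the finitely many semialgebraic connected components and multiply the resulting polynomials $P_i$), and at the end you should note that the coefficients $c_k\in\mathbb{C}(z)$ may have poles in $U$, so the cleared identity $P_i(z,f_i(z))=0$ first holds off a thin set and then everywhere by continuity. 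Neither is a genuine gap.
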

\begin{proof}
For $\mathbb{K}=\mathbb{R}$ this is \cite[Prop.\,8.1.8.]{Bochnak_Coste_Roy}.
The fact that if $f$ is analytic and semialgebraic then $f$ is a $\mathbb{C}$-Nash map is proved in \cite{FLR87}. We prove  the converse. 
We may assume that $m=1$.

It is enough to show that if $f:U\rightarrow \mathbb{C}$ is a continuos function on a 
semialgebraic open subset $U$ of $\mathbb{C}^n$ such that $P(x,f(x))=0$ on $U$ for some $P\in \mathbb{C}[X_1,\ldots ,X_n,X]$, $P\neq 0$, 
then $f$ is a semialgebraic function.
Firstly, we note that \[
A:=\{ (x,y,u,v)\in \mathbb{R}^{n}\times \mathbb{R}^n\times \mathbb{R}\times \mathbb{R} \suchthat P(x+iy,u+iv)=0\} 
\]
is a semialgebraic set.
Also  that the graph of $f$ is a subset of $A$, so to show that $f$ is semialgebraic, it suffices to find a partition of $A$ into semialgebraic
subsets compatible with the graph of $f$.
With that aim,  take $N\in \mathbb{N}$ such that for each $z\in \mathbb{C}^n$ the number of roots of $P(z,X)$ is bounded by $N$.
Fix $k\leq N$ and let 
\[
Z_k:=\{ (x,y)\in \mathbb{R}^{n}\times \mathbb{R}^n :|A\cap ((x,y)\times \mathbb{R}^2)|=k\}.
\]
Since $A$ is a semialgebraic set, $Z_k$ is also a semialgebraic set.
Now, for each $1\leq j\leq k$, let $s_j:Z_k\rightarrow \mathbb{C}$ be the semialgebraic map whose graph is
\begin{multline*}
\{ (z,r)\in (Z_k\times \mathbb{R}^2)\cap A  \suchthat \exists r_1 \ldots \exists r_k\in \mathbb{R}^2, r_1<\ldots <r_k,\\
(z,r_1)\ldots ,(z,r_k)\in (Z_k\times \mathbb{R}^2)\cap A \text{ and } r=r_j \},
\end{multline*}
where $<$ denotes the lexicographic order of $\mathbb{R}^2$.
Take a cell decomposition of $\mathbb{R}^{2n}$ compatible with the sets $Z_1,\ldots ,Z_N, U$ such that each $s_j$ is continuous in each 
one of the cells.
So take $C\sub U$ one of these cells and for each $j\in \{1,\ldots ,k\}$ let $C_j:=\{ x\in C \suchthat s_j(x)=f(x)\}$.
Since, for each $j$, both $f$ and $s_j$ are continuous in $C$, each $C_j$ is closed in $C$.
Also, by definition of the $s_j$, the sets $C_1,\ldots ,C_k$ are disjoint and, as there is a finite number of them, they are open in $C$.
Since $C$ is connected and not empty, we deduce that there exists a unique $j\in \{1,\ldots ,k\}$ such that $C=C_j$.
So $f\equiv s_j$ on $C$, which shows that the restriction of $f$ to $C$ is semialgebraic.
This is clearly enough, since we have partitioned $\mathbb{R}^{2n}$ into finitely many semialgebraic sets $Z_1,\ldots ,Z_N$, and each 
one of these sets into finitely many cells.
\end{proof}

Throughout this paper, we will think of $\mathbb{K}$-Nash maps as analytic,  algebraic  and defined on a semialgebraic set.

The following well known  properties will be frequently used, see, \emph{e.g.}, \cite{Bochnak_Coste_Roy} for the real case, the complex case is proved similarly.

\begin{lemma}\label{basic}
$(1)$ The restriction of a $\mathbb{K}$-Nash map to an open semialgebraic subset of its domain of definition 
 is a $\mathbb{K}$-Nash map.\\
$(2)$  The composition of $\mathbb{K}$-Nash maps is a $\mathbb{K}$-Nash map.\\
$(3)$ Given a $\mathbb{K}$-Nash map $f:U\times V\sub \mathbb{K}^m\times \mathbb{K}^n\rightarrow \mathbb{K}^p$ and 
 $a\in V$, the evaluation  at $a$, $f(x,a):U\rightarrow \mathbb{K}^p$ is also a $\mathbb{K}$-Nash map.\\
$(4)$ If  a $\mathbb{K}$-Nash map $f:U\sub \mathbb{K}^n\rightarrow \mathbb{K}^n$ is an analytic diffeomorphism 
 then $f^{-1}$ is also a $\mathbb{K}$-Nash map.
\end{lemma}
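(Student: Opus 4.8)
The plan is to route everything through Proposition \ref{complex Nash}, which says that for a map with open semialgebraic domain, being $\K$-Nash is equivalent to being both analytic and semialgebraic. Thus for each of the four items I only need to verify these two properties separately: analyticity will follow from standard facts of (real or complex) analysis, while semialgebraicity will follow from the Tarski--Seidenberg projection theorem and its usual corollaries, namely that images, fibres, and compositions of semialgebraic maps are semialgebraic, and that a set obtained by permuting coordinate blocks of a semialgebraic set is again semialgebraic. Throughout, $\C^n$ is viewed as $\R^{2n}$, so the semialgebraic bookkeeping is uniform in $\K$.

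For $(1)$, if $W\sub U$ is open and semialgebraic, then $f|_W$ is analytic as a restriction of an analytic map, and its graph is $\mathrm{graph}(f)\cap(W\times\K^m)$, an intersection of semialgebraic sets, hence semialgebraic; Proposition \ref{complex Nash} then applies. For $(2)$, given $\K$-Nash maps $f$ and $g$ with $f(U)$ contained in the domain of $g$, the composite $g\circ f$ is analytic because compositions of analytic maps are analytic, and it is semialgebraic because the composition of semialgebraic maps is semialgebraic (its domain is the semialgebraic set $U$ and its graph is a projection of a suitable semialgebraic set). Item $(3)$ is then immediate: the inclusion $\iota_a:U\to U\times V$, $x\mapsto(x,a)$, is polynomial, hence analytic and semialgebraic, so $\K$-Nash; the evaluation $f(\cdot,a)$ equals $f\circ\iota_a$ and is $\K$-Nash by $(2)$.

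The step requiring the most care is $(4)$. Since $f$ is an analytic diffeomorphism, its image $f(U)$ is open, and it is semialgebraic as the image of the semialgebraic set $U$ under the semialgebraic map $f$; moreover $\mathrm{graph}(f^{-1})$ is obtained from $\mathrm{graph}(f)$ by interchanging the two coordinate blocks of $\K^n\times\K^n$, so it too is semialgebraic, whence $f^{-1}$ is semialgebraic. For analyticity, the inverse of an analytic diffeomorphism is analytic: in the complex case this is the content of the holomorphic inverse function theorem, and in the real case it follows from the inverse function theorem together with the hypothesis that $f$ is a diffeomorphism, the Jacobian of $f$ being nonsingular on $U$. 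Applying Proposition \ref{complex Nash} to $f^{-1}$ on the open semialgebraic set $f(U)$ then yields that $f^{-1}$ is $\K$-Nash. The only genuine subtlety across all four parts is the control of semialgebraicity under projection, which is exactly what Tarski--Seidenberg supplies uniformly for both $\K=\R$ and $\K=\C$.
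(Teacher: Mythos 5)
Your proof is correct. The paper itself offers no proof of this lemma---it is stated as ``well known'' with a citation to Bochnak--Coste--Roy for the real case and the remark that the complex case is similar---and your argument supplies the details in exactly the way that reference does: reduce each item to checking analyticity and semialgebraicity separately via Proposition~\ref{complex Nash}, with Tarski--Seidenberg handling the semialgebraic bookkeeping (uniformly in $\K$ by viewing $\C^n$ as $\R^{2n}$).
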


Let $U$ be an open subset of $\mathbb{K}^m$.
We say that $f:U\rightarrow V\sub \mathbb{K}^n$ is a $\mathbb{K}$-\emph{Nash diffeomorphism} if $f$ is  both an analytic 
diffeomorphism and a $\mathbb{K}$-Nash map.
Let $M$ be a $\mathbb{K}$-analytic manifold. 
Two charts $(U,\phi )$ and $(V,\psi )$ of an atlas for $M$ are $\mathbb{K}$-\emph{Nash compatible} if $\phi (U)$ and $\psi (V)$ are semialgebraic and either $U\cap V= \emptyset$ or 
\[
\psi \phi ^{-1}:\phi (U\cap V)\rightarrow \psi (U\cap V) 
\]
is a $\mathbb{K}$-Nash diffeomorphism.
An atlas of $M$ is a $\mathbb{K}$-\emph{Nash atlas} if any two charts in the atlas are $\mathbb{K}$-Nash compatible.
In particular, $\phi (U)$ is semialgebraic for any $(U,\phi )$ in the $\mathbb{K}$-Nash atlas.
A $\mathbb{K}$-analytic manifold $M$ together with a $\mathbb{K}$-Nash atlas is called a \emph{locally $\mathbb{K}$-Nash 
manifold} (the word ``locally'' is added since the term $\mathbb{K}$-Nash manifold is usually reserved for those locally 
$\mathbb{K}$-Nash manifolds that admit a finite $\mathbb{K}$-Nash atlas).

Let $M_1$ and $M_2$ be locally $\mathbb{K}$-Nash manifolds equipped with $\mathbb{K}$-Nash atlases $\{(U_i,\phi _i)\}_{i\in I}$ and $\{(V_j,\psi _j)\}_{j\in J}$, respectively. A \emph{locally $\mathbb{K}$-Nash map} $f:M_1\rightarrow M_2$ is a (continuous) map such that for every $a\in M_1$ and every $j\in J$ such that $f(a)\in V_j$ there exists $i\in I$ and an open subset $U\sub U_i$ such that $a\in U$, $f(U)\sub V_j$ and
\[
\psi _j f\phi _i ^{-1} : \phi _i  (U) \rightarrow \psi _j (V_j)
\]
is a $\mathbb{K}$-Nash map. A locally $\mathbb{K}$-Nash map $f:M_1\rightarrow M_2$ is a \emph{locally $\mathbb{K}$-Nash diffeomorphism} if $f$ is  both an 
analytic (global) diffeomorphism and  a locally $\mathbb{K}$-Nash map (or equivalently  $f$ and $f^{-1}$  locally $\mathbb{K}$-Nash maps).

Locally $\mathbb{K}$-Nash maps can be characterised as follows.

\begin{proposition}\label{characterisation of locally Nash maps} Let $M_1$ and $M_2$ be locally $\mathbb{K}$-Nash manifolds with $\mathbb{K}$-Nash atlases $\{(U_i,\phi _i)\}_{i\in I}$ and $\{(V_j,\psi _j)\}_{j\in J}$ respectively.
The following are equivalent:\\
$(1)$ $f:M_1\rightarrow M_2$ is a locally $\mathbb{K}$-Nash map.
\\ $(2)$ For every $a\in M_1$ and for each $i\in I$ and $j\in J$ such that $a\in U_i$ and $f(a)\in V_j$ 
 there exists an open subset $U$ of $U_i$ such that $a\in U$, $f(U)\sub V_j$, and 
 \[ 
 \psi _j f  \phi _i^{-1} :\phi _i(U)\rightarrow \psi _j(V_j) 
 \]
 is a $\mathbb{K}$-Nash map.
\\ $(3)$ For every $a\in M_1$ there exist $i \in I$ and $j\in J$ such that $a\in U_i$ and $f(a)\in V_j$ and 
 there exists an open subset $U$ of $U_i$ such that $a\in U$, $f(U)\sub V_j$, and 
 \[
 \psi_j   f   \phi_i^{-1}:\phi_i(U )\rightarrow \psi_j (V_j ) 
 \] 
 is a $\mathbb{K}$-Nash map.
\end{proposition}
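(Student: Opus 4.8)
The plan is to prove the cycle $(2)\Rightarrow(1)\Rightarrow(3)\Rightarrow(2)$, in which the first two implications are formal and the last one carries all the content. For $(2)\Rightarrow(1)$: given $a\in M_1$ and $j\in J$ with $f(a)\in V_j$, I pick any $i\in I$ with $a\in U_i$ (possible since the $U_i$ cover $M_1$) and feed the pair $(i,j)$ into $(2)$ to obtain the required neighbourhood; continuity of $f$ is automatic, since near each point $f=\psi_j^{-1}(\psi_j f\phi_i^{-1})\phi_i$ is a composite of the homeomorphisms $\psi_j^{-1},\phi_i$ with a $\mathbb{K}$-Nash, hence continuous, map. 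For $(1)\Rightarrow(3)$: given $a$, I choose $j\in J$ with $f(a)\in V_j$ (the $V_j$ cover $M_2$) and apply $(1)$ to get the indices $i,j$ and the neighbourhood.

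For the substantial implication $(3)\Rightarrow(2)$, fix $a\in M_1$ and $i\in I$, $j\in J$ with $a\in U_i$, $f(a)\in V_j$. Condition $(3)$ supplies indices $i_0,j_0$ and an open $U'\subseteq U_{i_0}$ with $a\in U'$, $f(U')\subseteq V_{j_0}$, and $g:=\psi_{j_0}f\phi_{i_0}^{-1}$ a $\mathbb{K}$-Nash map on the semialgebraic set $\phi_{i_0}(U')$. The idea is to rewrite $\psi_j f\phi_i^{-1}$ as a composition of $g$ with the two chart transitions. First I absorb the source transition by setting $h:=g\circ(\phi_{i_0}\phi_i^{-1})=\psi_{j_0}f\phi_i^{-1}$ on $\Omega:=\phi_i(U_i\cap U_{i_0}\cap U')$. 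Since $\phi_{i_0}\phi_i^{-1}$ is a $\mathbb{K}$-Nash diffeomorphism on $\phi_i(U_i\cap U_{i_0})$ (the atlas of $M_1$ being a $\mathbb{K}$-Nash atlas), $\Omega$ is the preimage under it of $\phi_{i_0}(U')\cap\phi_{i_0}(U_i\cap U_{i_0})$, hence open and semialgebraic, and Lemma \ref{basic}$(1),(2)$ make $h$ a $\mathbb{K}$-Nash map on $\Omega$.

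Now I choose the neighbourhood. Because $\mathbb{K}$-Nash maps are semialgebraic and $\psi_{j_0}(V_j\cap V_{j_0})$ is a semialgebraic open set, the set $D:=\{x\in\Omega: h(x)\in\psi_{j_0}(V_j\cap V_{j_0})\}$ is open (continuity of $h$) and semialgebraic, and it contains $\phi_i(a)$ since $a\in U_i\cap U_{i_0}\cap U'$ and $f(a)\in V_j\cap V_{j_0}$. Put $U:=\phi_i^{-1}(D)$. Then $U$ is an open neighbourhood of $a$ with $U\subseteq U_i$, $\phi_i(U)=D$ semialgebraic, $U\subseteq U'$ (so $f(U)\subseteq V_{j_0}$), and by the defining condition on $D$ also $f(U)\subseteq V_j\cap V_{j_0}$. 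On $D$ one has $\psi_j f\phi_i^{-1}=(\psi_j\psi_{j_0}^{-1})\circ h$, where the target transition $\psi_j\psi_{j_0}^{-1}$ is a $\mathbb{K}$-Nash diffeomorphism on the semialgebraic open set $\psi_{j_0}(V_j\cap V_{j_0})$ (atlas of $M_2$ a $\mathbb{K}$-Nash atlas) and $h(D)\subseteq\psi_{j_0}(V_j\cap V_{j_0})$. By Lemma \ref{basic}$(1),(2)$ the composite $\psi_j f\phi_i^{-1}$ is a $\mathbb{K}$-Nash map on $\phi_i(U)$, which is exactly $(2)$.

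The only real care needed is not analytic but bookkeeping about domains: the very statement ``$\psi_j f\phi_i^{-1}$ is a $\mathbb{K}$-Nash map'' presupposes that its domain $\phi_i(U)$ be semialgebraic, so the neighbourhood cannot be produced by soft continuity alone. The point that unlocks the argument, and which I expect to be the main (mild) obstacle, is that $\mathbb{K}$-Nash maps --- in particular the transitions $\phi_{i_0}\phi_i^{-1}$, $\psi_j\psi_{j_0}^{-1}$ and the intermediate map $h$ --- are semialgebraic (Proposition \ref{complex Nash}), so that preimages of semialgebraic open sets remain semialgebraic and open; this is what keeps $\Omega$, and then $D=\phi_i(U)$, semialgebraic throughout.
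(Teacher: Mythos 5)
Your proof is correct and follows essentially the same route as the paper: the content is in $(3)\Rightarrow(2)$, handled by writing $\psi_j f\phi_i^{-1}$ as the composite of the given $\mathbb{K}$-Nash local expression with the two chart transitions and shrinking to an open semialgebraic neighbourhood on which the composite is defined. The only (cosmetic) difference is how that neighbourhood is made semialgebraic — you take the preimage of $\psi_{j_0}(V_j\cap V_{j_0})$ under the semialgebraic map $h$, while the paper simply shrinks to the preimage of a small ball about $\phi_i(a)$; both are valid.
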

\begin{proof}
It suffices to prove that $(3)$ implies $(2)$.
Fix $a\in M_1$ and let $i \in I$, $j\in J$ and $U\sub U_i$  as in  $(3)$.
Fix $k\in I$ and $\ell \in J$ with $a\in U_k$ and $f(a)\in V_\ell$.
Clearly, it suffices to show that there exists an open subset $U'$ of $U_k$ with $a\in U'$ such that 
\[
\psi_\ell   f   \phi_k^{-1}:\phi_k(U')\rightarrow \psi_\ell (V_\ell) 
\]
is $\mathbb{K}$-Nash. To prove the latter, firstly note that  by continuity  exists an open subset $U'$ of $U\cap U_k\sub U_i\cap U_k$ with $a\in U'$ such that
\[
(\psi_j   f   \phi_i^{-1})(\phi_i(U'))\sub \psi_j(V_j\cap V_\ell). 
\]
Moreover, we can assume that $\phi_i(U')$ is semialgebraic (it suffices to take, instead of $U'$, the preimage of an open ball 
centered in $\phi_i(a)$ and contained in the original $\phi_i(U')$), thus  the map 
\[
\psi_j   f   \phi_i^{-1}:\phi_i(U')\rightarrow \psi_j (V_j\cap V_\ell) 
\]
is  a $\mathbb{K}$-Nash map. On the other hand,  since change of charts 
are $\mathbb{K}$-Nash maps, the composition 
\[
\psi_\ell   f   \phi_k^{-1}=(\psi_\ell   \psi_j^{-1})\circ (\psi_j f   \phi_i^{-1})\circ (\phi_i   \phi_k^{-1})
: \phi _k(U')\rightarrow \psi _\ell(V_\ell)
\]
is also a $\mathbb{K}$-Nash map, as required.
\end{proof}

\section{Locally \texorpdfstring{$\mathbb{K}$}{K}-Nash groups}\label{LKNgroup} 

A \emph{locally $\mathbb{K}$-Nash group} is a locally Nash manifold equipped with group operations -- multiplication and inversion -- which 
are  locally $\mathbb{K}$-Nash maps. A \emph{homomorphism} (\emph{isomorphism})  of locally $\mathbb{K}$-Nash groups is a locally $\mathbb{K}$-Nash map (resp. diffeomorphism) that is also a homomorphism (resp. isomorphism) of groups. 

We begin by  showing  how to describe the locally $\mathbb{K}$-Nash structure of a locally $\mathbb{K}$-Nash group via a chart of the identity. Let $G$ together with an analytic atlas $\mathcal{A}$ be a $\mathbb{K}$-analytic group --  a Lie group -- 
and let $(U,\phi)$ be a chart  of the identity in $\mathcal{A}$. Then, 
\[
\mathcal{A}_{(U,\phi)}:= \{ (gU, \phi _g) \suchthat \phi _g:gU\rightarrow \mathbb{K}^n:u\mapsto \phi (g^{-1}u)\}_{g\in G}
\]
is also an analytic atlas for $G$. $\mathcal{A}_{(U,\phi)}$ might not be a $\mathbb{K}$-Nash atlas for $G$, but if it is so, then \emph{the locally $\mathbb{K}$-Nash group $G$ equipped with $\mathcal{A}_{(U,\phi)}$ will be denoted $(G,\cdot,\phi |_U)$}.

\begin{fact}{\cite[Lem.\,1]{Madden_Stanton}}\label{compatibility0}
Let $G$ be a $\mathbb{K}$-analytic group with atlas $\mathcal{A}$.
Let $(U,\phi )\in \mathcal{A}$ be a chart of the identity such that:
\begin{enumerate}
\noindent \begin{minipage}{0.88\textwidth}
\item[$(i)$] there exists an open neighbourhood of the identity $U'\sub U$ such that
\[
\phi\,m(\phi ^{-1},\phi ^{-1}):\phi (U')\times \phi (U')\rightarrow \phi (U) :
(x,y)\mapsto \phi (\phi ^{-1}(x)\cdot \phi ^{-1}(y))
\]
is a $\mathbb{K}$-Nash map, where $m:G\times G\to m: (g,h)\mapsto g\cdot h$, and
\item[$(ii)$] for each $g\in G$ there exists an open neighbourhood of the identity $U_g\sub U$ such that
\[
\phi \circ -^{g} \circ \phi ^{-1}:\phi (U_g)\rightarrow \phi (U) :
x\mapsto \phi (g^{-1}\phi ^{-1}(x)g)
\]
is a $\mathbb{K}$-Nash map.
\end{minipage}
\end{enumerate}
Then, there exists a neighbourhood of the identity $V\sub U$ such that 
$\mathcal{A}_{(V,\phi )}=\{ (gV,\phi _g)\} _{g\in G}$ is a $\mathbb{K}$-Nash atlas for $G$ and hence $(G,\cdot ,\phi |_V)$ is a 
locally $\mathbb{K}$-Nash group.
\end{fact}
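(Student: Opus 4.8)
The plan is to fix one small neighbourhood $V$ of the identity $e$ and then check, separately, that $\mathcal{A}_{(V,\phi)}$ is a $\mathbb{K}$-Nash atlas and that multiplication and inversion are locally $\mathbb{K}$-Nash for it. First I would pick $V\sub U'$ with $\phi(V)$ a semialgebraic open ball around $\phi(e)$, small enough that $VV^{-1}\sub U'$ — possible by continuity of $(v_1,v_2)\mapsto v_1v_2^{-1}$ at $(e,e)$. Since $\phi_g(gV)=\phi(V)$ is then semialgebraic for every $g$, the only remaining point for the atlas is the $\mathbb{K}$-Nash compatibility of overlapping charts.

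For compatibility, note $\phi_g^{-1}(x)=g\phi^{-1}(x)$, so the transition map from $(gV,\phi_g)$ to $(hV,\phi_h)$ is $x\mapsto\phi(h^{-1}g\,\phi^{-1}(x))$. The key observation is that a nonempty overlap $gV\cap hV$ forces $k:=h^{-1}g\in VV^{-1}\sub U'$: for $u$ in the overlap one has $k=(h^{-1}u)(g^{-1}u)^{-1}$ with both factors in $V$. Hence $k\in U$, so $\phi(k)\in\phi(U')$ and the transition map equals $x\mapsto\phi\bigl(\phi^{-1}(\phi(k))\cdot\phi^{-1}(x)\bigr)$, i.e. the map of hypothesis (i) with its first argument frozen at $\phi(k)$; by Lemma~\ref{basic}(3) it is a $\mathbb{K}$-Nash map. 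Being moreover an analytic diffeomorphism (the charts form an analytic atlas), it is a $\mathbb{K}$-Nash diffeomorphism, and $\mathcal{A}_{(V,\phi)}$ is a $\mathbb{K}$-Nash atlas.

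For multiplication I would write its coordinate expression at an arbitrary base point $(g_0,h_0)$, in the charts $(g_0V,\phi_{g_0})$, $(h_0V,\phi_{h_0})$ and $(g_0h_0V,\phi_{g_0h_0})$; a direct computation gives $(x,y)\mapsto\phi\bigl(h_0^{-1}\phi^{-1}(x)\,h_0\,\phi^{-1}(y)\bigr)$. This is exactly where hypothesis (ii) enters: setting $c_{h_0}(x):=\phi(h_0^{-1}\phi^{-1}(x)h_0)$, which is $\mathbb{K}$-Nash near $\phi(e)$ by (ii), the expression becomes $\phi\bigl(\phi^{-1}(c_{h_0}(x))\cdot\phi^{-1}(y)\bigr)$, the composition of $c_{h_0}$ in the $x$-slot with the multiplication map of (i). After shrinking to a neighbourhood of $(g_0,h_0)$ on which $c_{h_0}(x)$ and $y$ stay in $\phi(U')\cap\phi(U_{h_0})$ — possible since $c_{h_0}(\phi(e))=\phi(e)$ — Lemma~\ref{basic}(1),(2) shows this is $\mathbb{K}$-Nash, so $m$ is locally $\mathbb{K}$-Nash by Proposition~\ref{characterisation of locally Nash maps}.

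Inversion is the only part not immediate from (i) and (ii), and the main obstacle. Its coordinate map at $g_0$ is $x\mapsto\phi\bigl(g_0\,\phi^{-1}(x)^{-1}g_0^{-1}\bigr)$, which first requires that inversion near $e$, namely $\iota_\phi(x):=\phi(\phi^{-1}(x)^{-1})$, be $\mathbb{K}$-Nash. I would derive this from (i) alone: writing $\mu$ for the map of (i), the $\mathbb{K}$-Nash map $F(x,y):=(x,\mu(x,y))$ satisfies $\mu(\phi(e),y)=y$, so its Jacobian at $(\phi(e),\phi(e))$ is block lower-triangular with identity diagonal blocks and hence invertible; thus $F$ is an analytic diffeomorphism near $(\phi(e),\phi(e))$ and $F^{-1}$ is $\mathbb{K}$-Nash by Lemma~\ref{basic}(4), whence $\iota_\phi(x)$, the second coordinate of $F^{-1}(x,\phi(e))$, is $\mathbb{K}$-Nash by Lemma~\ref{basic}(3). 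The inversion coordinate map is then $c_{g_0^{-1}}\circ\iota_\phi$ — conjugation by $g_0^{-1}$, furnished by (ii), composed with $\iota_\phi$ — hence locally $\mathbb{K}$-Nash after a final shrinking. The genuine work throughout is the bookkeeping that keeps every intermediate point inside the neighbourhoods $U'$ and $U_g$ on which (i) and (ii) hold; choosing $V$ small at the outset and shrinking pointwise for the operations takes care of it.
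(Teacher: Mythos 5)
Your argument is correct, but note that the paper does not prove this statement at all: it is quoted as Fact~\ref{compatibility0} directly from Madden--Stanton, so there is no internal proof to compare against. Your write-up is a sound self-contained reconstruction. The decomposition is the natural one: (i) alone gives the chart compatibility, since a nonempty overlap $gV\cap hV$ forces $h^{-1}g\in VV^{-1}\sub U'$ and the transition map is the multiplication map of (i) with first slot frozen (Lemma~\ref{basic}(3)); (ii) enters exactly where it must, namely in the coordinate expression $\phi\bigl(h_0^{-1}\phi^{-1}(x)h_0\phi^{-1}(y)\bigr)$ of $m$ at a general base point $(g_0,h_0)$; and the genuinely nonformal step --- that inversion near $e$ is $\mathbb{K}$-Nash even though neither hypothesis mentions it --- is handled correctly by applying the inverse function theorem to $F(x,y)=(x,\mu(x,y))$, whose Jacobian at $(\phi(e),\phi(e))$ is invertible because $\mu(\phi(e),\cdot)=\id$, and then invoking Lemma~\ref{basic}(4). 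Two small points you gloss over but which do go through: the overlap image $\phi_g(gV\cap hV)$ must be checked to be semialgebraic (it is the preimage of the semialgebraic $\phi(V)$ under the semialgebraic map $x\mapsto\mu(\phi(k),x)$ restricted to $\phi(V)$), and the use of Proposition~\ref{characterisation of locally Nash maps} for $m$ and the inversion presupposes that $\mathcal{A}_{(V,\phi)}$ has already been verified to be a $\mathbb{K}$-Nash atlas, which your ordering of the steps respects.
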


We note that if $G$ is an abelian group then $(ii)$ of  the fact is trivially satisfied.
So, in that case, the proposition says that each chart of the identity satisfying $(i)$ induces a locally $\mathbb{K}$-Nash group 
structure on $G$.

\begin{proposition}\label{compatibility1}
Let $G$ be a locally $\mathbb{K}$-Nash group equipped with a $\mathbb{K}$-Nash atlas $\mathcal{A}$.
Then, for every chart of the identity $(U,\phi )\in \mathcal{A}$, there exists an open neighbourhood of the identity $V\subseteq U$ such that $G$ 
equipped with $\mathcal{A}$ is isomorphic to $(G,\cdot , \phi |_V)$.
\end{proposition}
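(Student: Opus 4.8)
The plan is to produce the neighbourhood $V$ from Fact~\ref{compatibility0} and then verify that the set-theoretic identity $\mathrm{id}_G\colon G\to G$ is the required isomorphism of locally $\mathbb{K}$-Nash groups between $G$ equipped with $\mathcal{A}$ and $(G,\cdot,\phi|_V)$. Since $\mathrm{id}_G$ is trivially a group isomorphism and the charts $\phi_g=\phi\circ L_{g^{-1}}$ of $\mathcal{A}_{(V,\phi)}$ (writing $L_a(x)=ax$ for left translation) are left translates of $\phi$---hence analytically compatible with $\mathcal{A}$, as $G$ is an analytic group---the map $\mathrm{id}_G$ is automatically an analytic diffeomorphism. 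Everything therefore reduces to checking that both $\mathrm{id}_G$ and its inverse are locally $\mathbb{K}$-Nash maps.

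First I would check that the chart $(U,\phi)$ satisfies the hypotheses of Fact~\ref{compatibility0}. For $(i)$, the multiplication $m\colon G\times G\to G$ is a locally $\mathbb{K}$-Nash map and $m(e,e)=e\in U$, where $e$ denotes the identity; applying Proposition~\ref{characterisation of locally Nash maps} at $(e,e)$ with the product chart $(U\times U,\phi\times\phi)$ and the target chart $(U,\phi)$ yields an open neighbourhood of $(e,e)$ on which $\phi\,m(\phi^{-1},\phi^{-1})$ is $\mathbb{K}$-Nash; shrinking this neighbourhood to a product $U'\times U'$ and restricting (Lemma~\ref{basic}$(1)$) gives $(i)$. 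For $(ii)$, each conjugation $x\mapsto g^{-1}xg$ is locally $\mathbb{K}$-Nash and fixes $e$, so the same application of Proposition~\ref{characterisation of locally Nash maps} produces the required neighbourhood $U_g$. Fact~\ref{compatibility0} then supplies a neighbourhood $V\subseteq U$ of the identity for which $\mathcal{A}_{(V,\phi)}$ is a $\mathbb{K}$-Nash atlas, so $(G,\cdot,\phi|_V)$ is indeed a locally $\mathbb{K}$-Nash group.

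It remains to prove that $\mathrm{id}_G$ is locally $\mathbb{K}$-Nash in both directions, for which I would use the criterion in Proposition~\ref{characterisation of locally Nash maps}$(3)$ at an arbitrary point $a\in G$, choosing the target chart $(aV,\phi_a)$ (so $a\in aV$ since $e\in V$). On the relevant overlap the two change-of-chart maps are
\[
\phi_a\circ\phi_i^{-1}=\phi\circ L_{a^{-1}}\circ\phi_i^{-1}
\qquad\text{and}\qquad
\phi_i\circ\phi_a^{-1}=\phi_i\circ L_{a}\circ\phi^{-1},
\]
where $(U_i,\phi_i)\in\mathcal{A}$ is any chart containing $a$. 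The key point is that the left translations $L_{a^{\pm1}}$ are locally $\mathbb{K}$-Nash maps for the atlas $\mathcal{A}$: expressing $m$ in charts and freezing the first variable, Lemma~\ref{basic}$(3)$ shows that $m(a,-)$ and $m(a^{-1},-)$ are $\mathbb{K}$-Nash in charts. Feeding this into Proposition~\ref{characterisation of locally Nash maps} at $a$ (resp.\ at $e$) exhibits the first (resp.\ second) displayed map as a $\mathbb{K}$-Nash map on a suitable semialgebraic neighbourhood, which is exactly what the criterion requires.

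I expect the only delicate points to be bookkeeping ones: ensuring that the neighbourhoods obtained from Proposition~\ref{characterisation of locally Nash maps} can be shrunk to semialgebraic domains (e.g.\ small balls) so that restrictions remain $\mathbb{K}$-Nash by Lemma~\ref{basic}$(1)$, and verifying that the identifications $\phi_a=\phi\circ L_{a^{-1}}$ hold on the overlaps where all the relevant compositions are defined. The genuinely substantive input---that left-invariant change of charts stays $\mathbb{K}$-Nash after shrinking to $V$---is already packaged in Fact~\ref{compatibility0}, so no new geometric argument should be needed beyond these local verifications.
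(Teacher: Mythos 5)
Your proposal is correct and follows essentially the same route as the paper's proof: verify hypotheses $(i)$ and $(ii)$ of Fact~\ref{compatibility0} for $(U,\phi)$ using Proposition~\ref{characterisation of locally Nash maps}, obtain $V$, and then show the identity map is locally $\mathbb{K}$-Nash by expressing the chart transitions $\phi_a\circ\phi_i^{-1}$ as left translations obtained by freezing one variable of the multiplication map. The only cosmetic differences are that you assert directly that conjugation is locally $\mathbb{K}$-Nash where the paper unwinds it as an explicit composition of two frozen multiplication maps in charts, and that you check both directions of the identity map where the paper notes one direction suffices by the definition of locally $\mathbb{K}$-Nash diffeomorphism.
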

\begin{proof}
Firstly, we will check that $(U,\phi)$ satisfies $(i)$ and $(ii)$ of Fact\,\ref{compatibility0}.
So that, we will be able to conclude  that there exists $V\sub U$ such that $\mathcal{A}_{(V,\phi )}$ is a $\mathbb{K}$-Nash atlas for $G$. Finally, we will show that the identity map from $G$ equipped with $\mathcal{A}$ to $G$ equipped with 
$\mathcal{A}_{(V,\phi )}$ is a locally $\mathbb{K}$-Nash diffeomorphism and, hence, an isomorphism of locally $\mathbb{K}$-Nash groups.

Let $(U,\phi )\in \mathcal{A}$ be a chart of the identity.
Since $m:G\times G\rightarrow G$ is a locally $\mathbb{K}$-Nash map when $G$ is equipped with $\mathcal{A}$, by Proposition\,\ref{characterisation of locally Nash maps}.$(2)$ we deduce the following facts:

\noindent 
(1) There exists an open neighbourhood of the identity $U'\sub U$ such that
\[
\phi\,m (\phi ^{-1},\phi ^{-1}):\phi (U')\times \phi (U')\rightarrow \phi (U) :
(x,y)\mapsto \phi (\phi ^{-1}(x)\cdot \phi ^{-1}(y))
\]
is a $\mathbb{K}$-Nash map.
So $(U,\phi )$ satisfies $(i)$ of Fact\,\ref{compatibility0}.

\noindent
(2) Fix $g\in G$ and $(W_1,\psi _1),(W_2,\psi _2)\in \mathcal{A}$ coordinate neighbourhoods of $g$ and $g^{-1}$ respectively.
Then there exist open neighbourhoods $W_1'\sub W_1$ and $W_2'\sub W_2$ of $g$ and $g^{-1}$ respectively such that
$$\displaystyle 
\begin{array}{rcrcl}
\phi\,m (\psi _2 ^{-1},\psi _1 ^{-1})& : & \psi _2 (W_2')\times \psi _1 (W_1')& \rightarrow & \phi (U) \\
& & (z,x) & \mapsto & \phi (\psi _2 ^{-1}(z)\cdot \psi _1 ^{-1}(x))
\end{array}$$
is a $\mathbb{K}$-Nash map.
Similarly, there exist open neighbourhoods $U_g\sub U$ and $W_1''\sub W_1'$ of the identity and $g$ respectively such that 
$$\displaystyle 
\begin{array}{rcrcl}
\psi _1 m (\phi ^{-1},\psi _1 ^{-1})& : & \phi (U_g)\times \psi _1 (W_1'')& \rightarrow & \psi _1 (W_1') \\
& & (x,y) & \mapsto & \psi _1 (\phi ^{-1}(x)\cdot \psi _1 ^{-1}(y))
\end{array}$$
is a $\mathbb{K}$-Nash map.
If we evaluate the first map at $z=\psi _2 (g^{-1})$ and the second at $y=\psi _1 (g)$, we again obtain $\mathbb{K}$-Nash 
maps.
Then, composing both maps, we deduce that $(U,\phi )$ satisfies $(ii)$ for $g$ of Fact\,\ref{compatibility0}.

Hence, $(U,\phi)$ is under the hypothesis of Fact\,\ref{compatibility0} and, therefore, there exists an open neighbourhood of the 
identity $V\sub U$ such that $\mathcal{A}_{(V,\phi )}$ is a $\mathbb{K}$-Nash atlas for $G$.

Now, we check that the identity map from $G$ equipped with $\mathcal{A}$ to $G$ equipped with $\mathcal{A}_{(V,\phi )}$ is a 
locally $\mathbb{K}$-Nash diffeomorphism. It is enough to show that it is a locally $\mathbb{K}$-Nash map.
By definition it suffices to prove that for each $g,h\in G$ with $g\in hV$ there exist $(W_1,\psi _1 )\in \mathcal{A}$ with 
$g\in W_1$ and an open neighbourhood $W_1'\sub W_1\cap hV$ of $g$ such that ($\psi _1(W_1')$ is semialgebraic and)
\[
\phi _h   \psi _1 ^{-1} : \psi _1(W_1') \rightarrow \phi (V): x\mapsto \phi (h^{-1} \psi _1 ^{-1}(x))
\]
is a $\mathbb{K}$-Nash map.
Let $g$ and $h$ be fixed with $g\in hV$. Let $(W_2,\psi _2 )\in \mathcal{A}$ be a coordinate neighbourhood of $h^{-1}$.
Assume $G$ is equipped with $\mathcal{A}$. Since $h^{-1}g\in V$ and $m :G\times G\rightarrow G$ is a locally $\mathbb{K}$-Nash map, there exists a coordinate neighbourhood $(W_1,\psi _1)\in \mathcal{A}$ of $g$, and open neighbourhoods $W_2'\sub W_2$ and $W_1'\sub W_1$ of $h^{-1}$ and $g$ respectively such that $W_2'\cdot W_1'\sub V$ and
\[
\begin{array}{rcrcl}	
\phi\,m (\psi _2 ^{-1},\psi _1 ^{-1})& : & (\psi _2 (W_2'), \psi _1(W_1')) & \rightarrow & \phi (V)\\  
& & (x,y) & \mapsto & \phi (\psi _2 ^{-1}(x)\cdot \psi _1 ^{-1}(y))
\end{array}
\]
is a $\mathbb{K}$-Nash map. We evaluate the map above at $x=\psi _2 (h^{-1})$ to deduce that
\[
\phi _h   \psi _1 ^{-1}: \psi _1 (W_1')\rightarrow \phi (V):x \mapsto \phi (h^{-1} \psi _1 ^{-1}(x))
\]
is a $\mathbb{K}$-Nash map, as required.
\end{proof}

Next proposition provides a sufficient condition for a pure group homomorphism of locally $\mathbb{K}$-Nash groups to be a 
locally $\mathbb{K}$-Nash homomorphism.

\begin{proposition}\label{lochomo} Let $G$ and $H$ be locally $\mathbb{K}$-Nash groups and let $\alpha :G\rightarrow H$ be a group homomorphism.
Suppose there are charts $(U,\phi)$ and $(V,\psi)$ of $G$ and $H$ respectively and an open subset $U'\sub U$ such that 
$\alpha (U')\sub V$ and $$\psi   \alpha    \phi^{-1}:\phi(U')\rightarrow \psi(V)$$ is a $\mathbb{K}$-Nash map.
Then $\alpha $ is a locally $\mathbb{K}$-Nash homomorphism. 
\end{proposition}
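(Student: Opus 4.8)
The plan is to prove that $\alpha$ is a locally $\mathbb{K}$-Nash map; since it is already a group homomorphism, this suffices to make it a locally $\mathbb{K}$-Nash homomorphism. By Proposition \ref{characterisation of locally Nash maps} it is enough to produce, for each $a\in G$, a neighbourhood of $a$ on which $\alpha$ admits a $\mathbb{K}$-Nash local representation. The whole idea is to transport the $\mathbb{K}$-Nash behaviour of $\alpha$ on $U'$ to an arbitrary point $a$ by means of left translations and the homomorphism identity.

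First I would record two auxiliary facts. \emph{(a)} The restriction $\alpha|_{U'}\colon U'\to H$ is a locally $\mathbb{K}$-Nash map: for every $u\in U'$ the charts $(U,\phi)$ and $(V,\psi)$ witness condition $(3)$ of Proposition \ref{characterisation of locally Nash maps}, since $\psi\alpha\phi^{-1}$ is $\mathbb{K}$-Nash on the (semialgebraic) set $\phi(U')$ and restrictions of $\mathbb{K}$-Nash maps to open semialgebraic subsets are again $\mathbb{K}$-Nash by Lemma \ref{basic}$(1)$. \emph{(b)} For any fixed $s\in G$ the left translation $L_s\colon G\to G$, $g\mapsto sg$, is a locally $\mathbb{K}$-Nash map, and likewise for left translations in $H$. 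This follows from the fact that $m\colon G\times G\to G$ is locally $\mathbb{K}$-Nash (by the definition of locally $\mathbb{K}$-Nash group) together with the evaluation property Lemma \ref{basic}$(3)$: in suitable charts the representation of $m$ is $\mathbb{K}$-Nash in both variables, and fixing the first variable at the coordinate of $s$ yields a $\mathbb{K}$-Nash map. This is exactly the evaluation manoeuvre already used in the proof of Proposition \ref{compatibility1}.

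With these in hand, fix an arbitrary $a\in G$ and choose any $b\in U'$. Put $s:=ba^{-1}\in G$ and $c:=\alpha(a)\alpha(b)^{-1}\in H$, and consider the open set $W:=\{g\in G: sg\in U'\}=ab^{-1}U'$, which contains $a$ because $sa=b\in U'$. Using that $\alpha$ is a homomorphism one checks $c\,\alpha(sg)=\alpha(a)\alpha(b)^{-1}\alpha(b)\alpha(a)^{-1}\alpha(g)=\alpha(g)$ for every $g\in W$, so that
\[
\alpha|_W=L_c\circ\alpha|_{U'}\circ L_s|_W,
\]
where $L_s$ maps $W$ into $U'$ and $L_c$ denotes left translation by $c$ in $H$. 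Thus on $W$ the map $\alpha$ is a composition of locally $\mathbb{K}$-Nash maps, hence locally $\mathbb{K}$-Nash (composition of locally $\mathbb{K}$-Nash maps is locally $\mathbb{K}$-Nash, which follows from Lemma \ref{basic}$(2)$ through Proposition \ref{characterisation of locally Nash maps}); in particular $\alpha$ is locally $\mathbb{K}$-Nash at $a$. Since $a$ was arbitrary, $\alpha$ is a locally $\mathbb{K}$-Nash map on $G$, and therefore a locally $\mathbb{K}$-Nash homomorphism.

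A word on where the care is needed. The homomorphism identity is the only genuine idea and is immediate, so the main obstacle is the bookkeeping in fact \emph{(b)}: verifying that a single fixed translation is really locally $\mathbb{K}$-Nash in the chart sense, and that the triple composition above can be realised, chart by chart, as a composition of $\mathbb{K}$-Nash maps so that Lemma \ref{basic}$(2)$ applies. This is routine given the evaluation lemma, but it is the step that must be written out with attention to the choice of charts.
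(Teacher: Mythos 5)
Your argument is correct, and it reaches the conclusion by a cleaner, more uniform route than the paper. The paper's proof is organized in two stages: it first treats the case where $U'$ and $V$ are neighbourhoods of the identity, invoking Proposition~\ref{compatibility1} to pass to the translated atlases $\mathcal{A}_{(U,\phi)}$ and $\mathcal{A}_{(V,\psi)}$, in which the chart at $g$ is $\phi_g=\phi\circ L_{g^{-1}}$ and the homomorphism identity makes the local representation of $\alpha$ at every point \emph{literally equal} to $\psi\alpha\phi^{-1}$; it then reduces the general case to the identity case by composing with $L_g$ and $L_{\alpha(g)^{-1}}$ exactly as in your fact \emph{(b)}. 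You instead skip the reduction to the identity and the change of atlas altogether, writing $\alpha|_W=L_c\circ\alpha|_{U'}\circ L_s|_W$ directly for an arbitrary base point $a$ and any chosen $b\in U'$; the identity $c\,\alpha(sg)=\alpha(g)$ checks out for non-abelian $G$ as well. The price is that you must supply two general-purpose facts the paper never isolates as lemmas --- that a fixed left translation is a locally $\mathbb{K}$-Nash self-map of $G$ (your evaluation manoeuvre, which is indeed the same computation the paper performs inside the proof of Proposition~\ref{compatibility1}), and that compositions of locally $\mathbb{K}$-Nash maps between locally $\mathbb{K}$-Nash manifolds are locally $\mathbb{K}$-Nash (routine from Proposition~\ref{characterisation of locally Nash maps} and Lemma~\ref{basic}, and used implicitly elsewhere in the paper, e.g.\ in Proposition~\ref{covering}). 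The gain is a single argument with no case distinction and no dependence on Proposition~\ref{compatibility1}; the paper's version stays closer to explicit chart formulas already on the table. One small point worth stating explicitly if you write this up: the hypothesis that $\psi\alpha\phi^{-1}$ is a $\mathbb{K}$-Nash map on $\phi(U')$ already forces $\phi(U')$ to be semialgebraic (by the convention that $\mathbb{K}$-Nash maps have semialgebraic domains), which is what makes $(U',\phi|_{U'})$ a legitimate chart for your fact \emph{(a)}.
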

\begin{proof}
Firstly, we prove that $\alpha $ is a locally $\mathbb{K}$-Nash map, provided that $U'\sub U$ and $V$ are neighbourhoods of the identity of 
$G$ and $H$ respectively.
By Proposition\,\ref{compatibility1}, we can assume that the locally $\mathbb{K}$-Nash groups $G$ and $H$ equipped with 
$\mathcal{A}_{(U ,\phi)}$ and $\mathcal{A}_{(V ,\psi)}$, respectively, are locally $\mathbb{K}$-Nash isomorphic to the original structures.
Let $g\in G$.
We have that $(gU,\phi_g)$ and $(\alpha (g)V,\psi_{\alpha (g)})$ are charts of $G$ and $H$ respectively with $g\in gU'\sub gU$ and 
$\alpha (g)\in \alpha (g)V$.
By Proposition\,\ref{characterisation of locally Nash maps}.$(3)$, it would be enough to show that the map
\[
\psi_{\alpha (g)}  \alpha   \phi^{-1}_g:\phi(U') \rightarrow \psi(V) 
\]
is $\mathbb{K}$-Nash.
The latter is true since $\psi   \alpha    \phi^{-1}$ is a $\mathbb{K}$-Nash map and
\begin{align*}
(\psi_{\alpha (g)}  \alpha   \phi^{-1}_g)(x) & =(\psi_{\alpha (g)}  \alpha )(g\phi^{-1}(x))\\
& =\psi_{\alpha (g)}(\alpha (g)\alpha (\phi^{-1}(x)))\\
& =\psi(\alpha (g)^{-1}\alpha (g)\alpha (\phi^{-1}(x)))\\
& =\psi(\alpha (\phi^{-1}(x))).
\end{align*}

It remains to prove that we can assume that the relevant open sets can be taken neighbourhoods of the identity.
Fix $g\in U'$.
Since multiplication in  $G$ is a locally $\mathbb{K}$-Nash map, there exist a chart $(U_0,\phi_0)$ of $G$ 
and an open neighbourhood of the identity $U'_0\sub U_0$ such that $gU'_0\sub U'$ and the map
\[
L_g:\phi_0(U'_0)\rightarrow \phi(U'):x\mapsto \phi(g\phi_0^{-1}(x))
\]
is a $\mathbb{K}$-Nash map.
Similarly, there exist a chart $(V_0,\psi_0)$ of the identity of $H$ and an open subset $V'\ni \alpha (g)$ of $V$ such that 
$\alpha (g)^{-1}V'\sub V_0$ and 
\[
L_{\alpha (g)^{-1}}:\psi(V')\rightarrow \psi_0(V_0):x\mapsto \psi_0(\alpha (g)^{-1}\psi^{-1}(x))
\]
is a $\mathbb{K}$-Nash map.
By continuity and since $(\psi   \alpha    \phi^{-1}  L_g)(\phi_0(e))=\psi(\alpha (g))$, we can take $U'_0$ small enough so that 
\[
(\psi   \alpha    \phi^{-1}  L_g)(\phi _0(U'_0))\sub\psi(V').
\]
In particular, the composition
\[
\begin{array}{rcccl}
L_{\alpha (g)^{-1}}  \psi   \alpha    \phi^{-1}  L_g & : & \phi_0(U'_0) & \rightarrow & \psi_0(V_0) \\
& & x & \mapsto & \psi_0(\alpha (\phi^{-1}_0(x)))
\end{array}
\]
is a $\mathbb{K}$-Nash map, as required.
\end{proof}

\begin{remark}\label{improved lochomo}
If $\mathbb{K}=\mathbb{R}$ in Proposition\,\ref{lochomo} then the result holds in case that 
$\psi   \alpha    \phi^{-1}:\phi(U')\rightarrow \psi(V)$ is just a semialgebraic map.
Indeed, by Fact\,\ref{Nash cell decomposition} and restricting $U'$ if necessary, we can assume that 
the map $\psi   \alpha    \phi^{-1}:\phi(U')\rightarrow \psi(V)$ is Nash.
\end{remark}

Next, we characterise those analytic isomorphisms which are isomorphisms of locally $\mathbb{K}$-Nash groups.
We recall that if an analytic map is an isomorphism of groups then its inverse is also an analytic map.

\begin{proposition}\label{compatibilityAut}
Let $G$ and $H$ be locally $\K$-Nash groups equipped with atlases $\mathcal{A}$ and $\mathcal{B}$, respectively. Let  $\alpha :G \rightarrow H$ be a continuous isomorphism.
Then,  the following  are equivalent. 

$(1)$ $\alpha$ is an isomorphism of locally $\mathbb{K}$-Nash groups, and  

$(2)$ there exist,  
for each pair of charts of the identity $(U,\phi)\in\mathcal{A}$ and $(V, \psi) \in \mathcal{B}$, an open neighbourhood of the identity  
$W\sub  U \cap \alpha^{-1}(V)$ such that $\psi  \alpha$ is algebraic over $\mathbb{K}(\phi)$ on $W$. 
\end{proposition}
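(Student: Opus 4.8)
The plan is to translate both conditions into statements about the single chart representative $\psi\alpha\phi^{-1}$ and then feed them into Proposition~\ref{lochomo} (with its real refinement, Remark~\ref{improved lochomo}) and Lemma~\ref{basic}. The formal observation underlying everything is that, for charts of the identity $(U,\phi)\in\mathcal A$ and $(V,\psi)\in\mathcal B$ and an open set $W\subseteq U\cap\alpha^{-1}(V)$, the assertion ``$\psi\alpha$ is algebraic over $\K(\phi)$ on $W$'' is literally the same as ``$\psi\alpha\phi^{-1}$ is algebraic over $\K(\id)$ on $\phi(W)$'': a witnessing identity $P_i(\phi_1(w),\dots,\phi_n(w),(\psi\alpha)_i(w))=0$ becomes, after the substitution $t=\phi(w)$ and using $(\psi\alpha\phi^{-1})_i(\phi(w))=(\psi\alpha)_i(w)$, the identity $P_i(t,(\psi\alpha\phi^{-1})_i(t))=0$, and conversely.

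For $(1)\Rightarrow(2)$ I would fix an arbitrary pair of charts of the identity $(U,\phi)\in\mathcal A$, $(V,\psi)\in\mathcal B$ and apply Proposition~\ref{characterisation of locally Nash maps}.$(2)$ to the locally $\K$-Nash map $\alpha$ at the point $a=e$ (note $e\in U$ and $\alpha(e)=e\in V$). This produces an open neighbourhood $W\subseteq U$ of $e$ with $\alpha(W)\subseteq V$ (so $W\subseteq U\cap\alpha^{-1}(V)$) on which $\psi\alpha\phi^{-1}$ is $\K$-Nash, hence algebraic over $\K(\id)$; by the observation above, $\psi\alpha$ is algebraic over $\K(\phi)$ on $W$, which is exactly $(2)$.

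For $(2)\Rightarrow(1)$ I would use one pair of charts of the identity together with the neighbourhood $W\subseteq U\cap\alpha^{-1}(V)$ given by $(2)$. Shrinking $W$ to $W_0:=\phi^{-1}(B)$ for a small open ball $B$ around $\phi(e)$ inside the semialgebraic set $\phi(U)$, I may assume $\phi(W_0)$ is semialgebraic while keeping $\psi\alpha\phi^{-1}$ continuous and algebraic over $\K(\id)$ on it. If $\K=\C$, Fact~\ref{analytic algebraic map} makes this map analytic, so it is a $\C$-Nash map and Proposition~\ref{lochomo} shows $\alpha$ is locally $\C$-Nash. If $\K=\R$, the map is continuous and algebraic over $\R(\id)$ on a semialgebraic set, hence semialgebraic (by the argument in the proof of Proposition~\ref{complex Nash}), and Remark~\ref{improved lochomo} lets me apply Proposition~\ref{lochomo} anyway. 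In either case the chart representatives of $\alpha$ are now $\K$-Nash, so $\alpha$ is analytic. For the inverse, $\phi\alpha^{-1}\psi^{-1}=(\psi\alpha\phi^{-1})^{-1}$: in the complex case $\alpha$ is an analytic diffeomorphism (its inverse is analytic by the fact recalled before the statement), so $\psi\alpha\phi^{-1}$ is an analytic diffeomorphism onto the open semialgebraic set $\psi(\alpha(W_0))$ and Lemma~\ref{basic}.$(4)$ makes its inverse $\C$-Nash; in the real case $\phi\alpha^{-1}\psi^{-1}$ is the inverse of a semialgebraic homeomorphism, hence semialgebraic. Applying Proposition~\ref{lochomo} (with Remark~\ref{improved lochomo} over $\R$) to $\alpha^{-1}:H\to G$ on the neighbourhood $\alpha(W_0)$ shows $\alpha^{-1}$ is locally $\K$-Nash. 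Thus $\alpha$ and $\alpha^{-1}$ are both locally $\K$-Nash maps and $\alpha$ is a group isomorphism, i.e.\ an isomorphism of locally $\K$-Nash groups.

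The main obstacle is the analyticity of $\psi\alpha\phi^{-1}$ in the step $(2)\Rightarrow(1)$: it cannot simply be deduced from continuity of $\alpha$, since continuous homomorphisms of complex Lie groups need not be holomorphic. The crux is that $(2)$ supplies exactly the algebraicity required to invoke Fact~\ref{analytic algebraic map} in the complex case, whereas in the real case analyticity is circumvented through Remark~\ref{improved lochomo}. The remaining technical points — arranging a semialgebraic chart domain by shrinking, and deducing that the inverse is $\K$-Nash via Lemma~\ref{basic}.$(4)$ — are routine.
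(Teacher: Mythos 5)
Your proposal is correct and follows essentially the same route as the paper: Proposition~\ref{characterisation of locally Nash maps}.$(2)$ for $(1)\Rightarrow(2)$, and for $(2)\Rightarrow(1)$ the reduction to a single semialgebraic connected chart domain, Fact~\ref{analytic algebraic map} plus Proposition~\ref{lochomo} over $\C$, and the semialgebraicity argument plus Remark~\ref{improved lochomo} over $\R$, treating the inverse symmetrically. The only (harmless) deviations are cosmetic: you invoke Lemma~\ref{basic}.$(4)$ for the inverse in the complex case where the paper reapplies Fact~\ref{analytic algebraic map} to $\phi\alpha^{-1}\psi^{-1}$ directly, and you cite the argument of Proposition~\ref{complex Nash} where the paper cites the proof of \cite[Prop.\,8.1.8]{Bochnak_Coste_Roy}.
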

\begin{proof}
We begin with  (1) implies (2).
Fix a pair of charts of the identity $(U,\phi )\in \mathcal{A}$ and $(V,\psi )\in \mathcal{B}$.
Since $\alpha$ is a locally $\mathbb{K}$-Nash map, by Proposition\,\ref{characterisation of locally Nash maps}.$(2)$ 
there exists an open neighbourhood of the identity $W\sub U\cap \alpha ^{-1}(V)$ such that
\[
\psi   \alpha   \phi ^{-1} : \phi (W) \rightarrow \psi (V): x\mapsto \psi (\alpha ( \phi ^{-1}(x)))
\]
is a $\mathbb{K}$-Nash map.
So $\psi   \alpha   \phi ^{-1}$ is algebraic over $\mathbb{K}(\id)$ on $\phi(W)$ and, hence, $\psi   \alpha$ 
is algebraic over $\mathbb{K}(\phi)$ on $W$, as required.

Now we show  (2) implies (1) for the case $\mathbb{K}=\mathbb{R}$.
Fix $i\in\{1,\ldots,n\}$.
By hypothesis $\psi_i  \alpha$ is algebraic over $\mathbb{R}(\phi)$ on $W$ and, therefore, since $\phi$ is a diffeomorphism,
$\psi_i   \alpha   \phi^{-1}$ is algebraic over $\mathbb{R}(\id)$ on $\phi(W)$. 
Hence, there exists a polynomial $P\in \mathbb{R}[x][Y]$ such that $$P(x,(\psi_i   \alpha   \phi ^{-1})(x)) = 0$$for all 
$x \in \phi(W)$.
Without loss of generality, we can assume that $\phi(W)$ is semialgebraic.
Then, by the proof of \cite[Prop.\,8.1.8]{Bochnak_Coste_Roy} and since 
$\psi_i   \alpha   \phi ^{-1}$ is continuous, we obtain that each coordinate function $\psi_i   \alpha   \phi^{-1}$ is a semialgebraic function on $\phi(W)$.
By Remark\,\ref{improved lochomo}, we deduce that $\alpha$ is a locally Nash map.
Moreover, we also have that the inverse of the above map,
\[
\phi   \alpha^{-1}  \psi^{-1}:\psi(\alpha(W))\rightarrow \phi(W)\sub \phi(U), 
\]
is semialgebraic and, therefore, again by Remark\,\ref{improved lochomo}, we deduce that $\alpha^{-1}$ is a locally Nash map.
Thus, $\alpha$ is a locally Nash isomorphism.  

Finally, we prove (2) implies (1) for the case $\mathbb{K}=\mathbb{C}$.
We can assume that $\phi(W)$ is connected and semialgebraic. 
Since $\psi   \alpha   \phi^{-1}|_{\phi(W)}$ is both continuous and algebraic over $\mathbb{C}(\id)$, 
it follows from Fact\,\ref{analytic algebraic map} and Proposition\,\ref{lochomo} that $\alpha$ is a locally $\mathbb{C}$-Nash 
homomorphism.
Moreover, by invariance of domain, we have that $V':=\alpha (W)$ is an open subset of $V$ and 
$\psi   \alpha   \phi^{-1}|_{\phi(W)}$ an homeomorphism. 
Thus, we have that
\[
\phi   \alpha^{-1}  \psi^{-1}|_{\psi(V')} 
\]
is continuous and also algebraic over $\mathbb{C}(\id)$.
Then, again by Fact\,\ref{analytic algebraic map} and Proposition\,\ref{lochomo}, we conclude that $\alpha^{-1}$ is a locally $\mathbb{C}$-Nash isomorphism, as required. 
\end{proof}

The following is an immediate consequence of  the last proposition.

\begin{cor}\label{compatibilityGerm}
Let $G$ together with a $\mathbb{K}$-Nash atlas $\mathcal{A}$ be a locally $\mathbb{K}$-Nash group.
Let $(U,\phi )$ and $(V,\psi)$ be charts of the identity of $\mathcal{A}$.
If $(G,\cdot,\phi |_U)$ and $(G,\cdot,\psi |_V)$ are locally $\mathbb{K}$-Nash groups then the identity map is a locally $\mathbb{K}$-Nash isomorphism.
\end{cor}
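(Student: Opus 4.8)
The plan is to apply Proposition~\ref{compatibilityAut} to the continuous group isomorphism $\alpha=\id:G\rightarrow G$, where the source copy of $G$ carries the $\mathbb{K}$-Nash atlas $\mathcal{A}_{(U,\phi)}$ (i.e.\ is the locally $\mathbb{K}$-Nash group $(G,\cdot,\phi|_U)$) and the target copy carries $\mathcal{A}_{(V,\psi)}$ (i.e.\ is $(G,\cdot,\psi|_V)$). Both are locally $\mathbb{K}$-Nash groups by hypothesis, and since the multiplication of $G$ is analytic the atlases $\mathcal{A}$, $\mathcal{A}_{(U,\phi)}$, $\mathcal{A}_{(V,\psi)}$ all define the same underlying $\mathbb{K}$-analytic manifold; hence $\id$ is an analytic, in particular continuous, isomorphism of groups and Proposition~\ref{compatibilityAut} applies. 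It then suffices to verify condition~$(2)$ of that proposition: to produce, for a pair of charts of the identity, an open neighbourhood $W$ of the identity on which $\psi\circ\id=\psi$ is algebraic over $\mathbb{K}(\phi)$.

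The natural pair to use is $(U,\phi)$ and $(V,\psi)$ themselves. Taking $g=e$ in the definition of $\mathcal{A}_{(U,\phi)}$ yields $(eU,\phi_e)=(U,\phi)$, a chart of the identity of the source atlas, and similarly $(V,\psi)$ is a chart of the identity of the target atlas. The key observation is that $(U,\phi)$ and $(V,\psi)$ are also charts of the \emph{original} $\mathbb{K}$-Nash atlas $\mathcal{A}$, and both contain the identity, so $U\cap V\neq\emptyset$. By $\mathbb{K}$-Nash compatibility of charts in $\mathcal{A}$, the transition map $\psi\phi^{-1}:\phi(U\cap V)\rightarrow\psi(U\cap V)$ is a $\mathbb{K}$-Nash diffeomorphism; in particular each component $(\psi\phi^{-1})_i$ is algebraic over $\mathbb{K}(\id)$ on $\phi(U\cap V)$, so there are polynomials $P_i$ with $P_i\big(\phi(u),\psi_i(u)\big)=0$ for all $u\in U\cap V$. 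This says precisely that each $\psi_i$ is algebraic over $\mathbb{K}(\phi)$ on $W:=U\cap V$, which is condition~$(2)$ for this pair.

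The one point to address is that condition~$(2)$ is phrased for \emph{every} pair of identity charts, whereas we have checked only one. This is harmless: the proof of the implication $(2)\Rightarrow(1)$ in Proposition~\ref{compatibilityAut} invokes the algebraicity hypothesis for a single fixed pair before passing through Proposition~\ref{lochomo}, which itself requires just one pair of Nash-compatible charts; so verifying $(2)$ for $(U,\phi),(V,\psi)$ already yields that $\id$ is a locally $\mathbb{K}$-Nash isomorphism. (Should one prefer to check an arbitrary pair $(gU,\phi_g)$, $(hV,\psi_h)$ directly, the transition $\psi_h\circ\phi_g^{-1}$ is $x\mapsto\psi(h^{-1}g\,\phi^{-1}(x))=\psi\circ L_{h^{-1}g}\circ\phi^{-1}(x)$, which is $\mathbb{K}$-Nash near $\phi(g^{-1})$ because left translation is a locally $\mathbb{K}$-Nash map in the original structure, the multiplication being locally $\mathbb{K}$-Nash.) Thus there is no genuine obstacle here: the entire content is the compatibility of the two identity charts inside the common atlas $\mathcal{A}$, which is exactly why the statement is a corollary of Proposition~\ref{compatibilityAut}.
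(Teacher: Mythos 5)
Your proof is correct and follows essentially the same route as the paper's: the paper's own proof is the one-line observation that, since $(U,\phi)$ and $(V,\psi)$ are $\mathbb{K}$-Nash compatible charts of $\mathcal{A}$, the result follows from Proposition~\ref{compatibilityAut} with $H=G$ and $\alpha=\mathrm{Id}_G$. Your additional care about the quantifier over all pairs of identity charts (resolved either via the single-pair sufficiency in the proof of $(2)\Rightarrow(1)$ or via left translations) is a legitimate point the paper glosses over, but it does not change the argument.
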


\proof
 Note that since $(U,\phi )$ and $(V,\psi)$ are $\mathbb{K}$-Nash compatible, the result follows from the proposition taking $H=G$ and $\alpha=Id_G$.
 \endproof

In particular,  the corollary  says that  the isomorphism type of the locally $\K$-Nash group $(G,\cdot,\phi |_U)$ depends only on the germ of $\phi $ at the identity.

\

Next we show that the category of locally $\mathbb{K}$-Nash groups is closed under universal coverings 
\begin{proposition}\label{covering}
Let $G$ be a locally $\mathbb{K}$-Nash group.
Let $\widetilde{G}$ be its (analytic) universal cover.
Then, $\widetilde{G}$ can be equipped with the structure of a locally $\mathbb{K}$-Nash group in such a way that  the covering map 
$\pi :\widetilde{G}\rightarrow G$ is a locally $\mathbb{K}$-Nash homomorphism.
This structure is unique up to isomorphism.
Moreover, any analytic section $s:W\to\widetilde{G}$, where $W\subseteq G$ is open, is a locally  $\mathbb{K}$-Nash map.
\end{proposition}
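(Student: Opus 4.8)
\emph{The plan} is to transport a chart of the identity from $G$ up to $\widetilde G$ along $\pi$ and then apply Fact\,\ref{compatibility0}. Recall that the universal cover of a connected $\mathbb K$-analytic group carries a unique $\mathbb K$-analytic group structure for which $\pi:\widetilde G\to G$ is a homomorphism sending the chosen point $\tilde e$ over $e$ to the identity; moreover $\pi$ is an analytic local diffeomorphism. Fix a chart of the identity $(U,\phi)\in\mathcal A$ of $G$ with $U$ evenly covered, let $\widetilde U$ be the sheet of $\pi^{-1}(U)$ containing $\tilde e$, and set $\tilde\phi:=\phi\circ(\pi|_{\widetilde U})$, a chart of $\tilde e$ with $\tilde\phi(\widetilde U)=\phi(U)$ and $\pi\circ\tilde\phi^{-1}=\phi^{-1}$. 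Because $\pi$ is a homomorphism, for $\tilde x,\tilde y$ near $\tilde e$ one computes
\[
\tilde\phi(\tilde x\,\tilde y)=\phi\big(\pi(\tilde x)\,\pi(\tilde y)\big)=\phi\,m(\phi^{-1},\phi^{-1})\big(\tilde\phi(\tilde x),\tilde\phi(\tilde y)\big),
\]
and, writing $g=\pi(\tilde g)$, $\tilde\phi\big(\tilde g^{-1}\tilde\phi^{-1}(x)\tilde g\big)=\phi\big(g^{-1}\phi^{-1}(x)g\big)$. Thus the multiplication- and conjugation-in-coordinates maps of $(\widetilde G,\tilde\phi)$ coincide with those of $(G,\phi)$, which are $\mathbb K$-Nash since $G$ is a locally $\mathbb K$-Nash group (conditions $(i)$ and $(ii)$ of Fact\,\ref{compatibility0} for $(U,\phi)$ are exactly what is checked in the proof of Proposition\,\ref{compatibility1}). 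Hence Fact\,\ref{compatibility0} yields $\widetilde V\subseteq\widetilde U$ making $(\widetilde G,\cdot,\tilde\phi|_{\widetilde V})$ a locally $\mathbb K$-Nash group.

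With this structure $\pi$ is a locally $\mathbb K$-Nash homomorphism: in the charts $(\widetilde V,\tilde\phi)$ and $(U,\phi)$ one has $\phi\circ\pi\circ\tilde\phi^{-1}=\mathrm{id}$ on $\phi(U)$, which is $\mathbb K$-Nash, so Proposition\,\ref{lochomo} applies.

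For uniqueness, let $(\widetilde G,\cdot,\tilde\psi)$, given by some chart $(\widetilde W,\tilde\psi)$ of $\tilde e$, be any locally $\mathbb K$-Nash structure for which $\pi$ is a locally $\mathbb K$-Nash homomorphism. Since $\pi$ is then a locally $\mathbb K$-Nash map, Proposition\,\ref{characterisation of locally Nash maps}$(2)$ gives a neighbourhood of $\tilde e$ on which $\phi\circ\pi\circ\tilde\psi^{-1}$, and hence $\tilde\phi\circ\tilde\psi^{-1}$, is $\mathbb K$-Nash; being in addition an analytic diffeomorphism, it is a $\mathbb K$-Nash diffeomorphism by Lemma\,\ref{basic}$(4)$, so $\tilde\phi$ and $\tilde\psi$ are $\mathbb K$-Nash compatible charts of the identity. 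In particular $\tilde\phi$ is algebraic over $\mathbb K(\tilde\psi)$ near $\tilde e$, so, exactly as in the proof of Proposition\,\ref{compatibilityAut}$(2)\Rightarrow(1)$ for the single pair $(\widetilde W,\tilde\psi),(\widetilde V,\tilde\phi)$ (which, through Proposition\,\ref{lochomo}, only uses one such pair), the identity map is a locally $\mathbb K$-Nash isomorphism between the two structures; equivalently one may invoke Corollary\,\ref{compatibilityGerm}.

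Finally, let $s:W\to\widetilde G$ be an analytic section and fix $g_0\in W$. As $\pi$ is a covering and $s$ is a continuous section, on a neighbourhood $U_0\subseteq W$ of $g_0$ the map $s$ coincides with the analytic local inverse $(\pi|_{\widetilde U})^{-1}$ of $\pi$ on the sheet $\widetilde U\ni s(g_0)$. Choosing charts $(\widetilde U,\tilde\chi)$ of $\widetilde G$ and $(U,\psi)$ of $G$, the map $\psi\circ\pi\circ\tilde\chi^{-1}$ is $\mathbb K$-Nash (since $\pi$ is locally $\mathbb K$-Nash) and an analytic diffeomorphism, so its inverse $\tilde\chi\circ s\circ\psi^{-1}$ is $\mathbb K$-Nash by Lemma\,\ref{basic}$(4)$; by Proposition\,\ref{characterisation of locally Nash maps} this shows $s$ is locally $\mathbb K$-Nash near $g_0$, hence everywhere. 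The main work is the bookkeeping of the first paragraph---verifying that the coordinate product and conjugation maps pulled back along $\pi$ literally equal those of $G$, so that Fact\,\ref{compatibility0} applies---together with the transitivity/compatibility argument making the uniqueness clean rather than a direct chart-by-chart recomputation.
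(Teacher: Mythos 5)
Your proof is correct and follows essentially the same route as the paper: both pull back a chart of the identity along $\pi$ (your sheet $\widetilde U$ with $\tilde\phi=\phi\circ\pi|_{\widetilde U}$ is the paper's chart $(s(U'),\phi\,\pi)$), observe that the coordinate expressions of multiplication and conjugation are unchanged so Fact\,\ref{compatibility0} applies, get that $\pi$ is locally $\mathbb{K}$-Nash via Proposition\,\ref{lochomo} since $\phi\,\pi\,\tilde\phi^{-1}=\id$, prove uniqueness through Proposition\,\ref{compatibilityAut}, and handle sections by local invertibility of $\pi$. The only cosmetic differences are that the paper phrases the section argument as the coordinate expression being literally the identity map rather than invoking Lemma\,\ref{basic}(4), and it notes explicitly that uniqueness of the analytic structure on $\widetilde G$ is what lets one compare two arbitrary locally $\mathbb{K}$-Nash atlases via the identity map.
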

\begin{proof}
By Proposition\,\ref{compatibility1}, we may assume that the $\mathbb{K}$-Nash atlas of $G$ is of the form 
$\mathcal{A}_{(U,\phi )}$ for some open neighbourhood of the identity $U\sub G$, where
\[
\mathcal{A}_{(U,\phi )}:=\{(gU, \phi _g) \suchthat \phi _g: gU\rightarrow \mathbb{K}^n:u\mapsto \phi (g^{-1}u)\}_{g\in G}\ .
\]
We know that $\widetilde{G}$ is an analytic group and $\pi$ an analytic homomorphism (see \cite[Cor.\,2.6.2]{Varadarajan}).
Then,  there exist an open neighbourhood of the identity $U'\sub U$ and an analytic section 
$s :U'\rightarrow \widetilde{G}$ such that $s (U')$ is a neighbourhood of the identity of $\widetilde{G}$ and 
$\pi   s = Id$.
Thus, $(s(U'),\phi \pi)$ is a chart of $\widetilde{G}$.
We note that since $(U,\phi )$ satisfies properties $(i)$ and $(ii)$ of Fact\,\ref{compatibility0}, 
$(s (U'),\phi   \pi)$ also does.
Therefore,  there exists an open neighbourhood of the identity $V\sub s (U')$ such that 
$\widetilde{G}$ together with $\mathcal{A}_{(V,\phi   \pi)}$ is a locally $\mathbb{K}$-Nash group, where
\[
\mathcal{A}_{(V,\phi   \pi )}:=\{(gV, \phi _{\pi(g)}  \pi \}_{g\in \widetilde{G}}\ .
\]
To check that the covering map $\pi$ is a locally $\mathbb{K}$-Nash homomorphism, it is enough to apply Proposition\,\ref{lochomo} to 
$\pi$ noting that
\[
\phi   \pi   (\phi   \pi )^{-1} : \phi   \pi (V) \rightarrow \phi (U)
\]
is the identity map and, hence, a $\mathbb{K}$-Nash map.

Now, we check that the structure is unique up to locally $\mathbb{K}$-Nash isomorphism. 
Fix two locally $\mathbb{K}$-Nash atlas $\mathcal{A}_1$ and $\mathcal{A}_2$ such that 
$\pi :\widetilde{G}\rightarrow G$ is a locally $\mathbb{K}$-Nash homomorphism when $\widetilde{G}$ is equipped with either 
$\mathcal{A}_1$ and $\mathcal{A}_2$.
Since the analytic structure is unique up to analytic isomorphism, it is enough to check that $(\widetilde{G},\mathcal{A}_1)$ and
$(\widetilde{G},\mathcal{A}_2)$ are locally $\mathbb{K}$-Nash isomorphic.
Since $\pi$ is a locally $\mathbb{K}$-Nash map, given a chart of the identity $(\varphi ,W)$ of $G$ there exist charts of the identity 
$(\psi_1 ,V_1)\in \mathcal{A}_1$ and $(\psi_2 ,V_2)\in \mathcal{A}_2$ such that both 
$\varphi   \pi   \psi_1 ^{-1} : \psi_1 (V_1)\rightarrow \mathbb{K}^n$ and 
$\varphi   \pi   \psi_2 ^{-1} :\psi_2 (V_2)\rightarrow \mathbb{K}^n$ are $\mathbb{K}$-Nash diffeomorphisms.
So $\psi_2   \psi_1 ^{-1}:\psi_1 (V_1\cap V_2)\rightarrow \psi_2 (V_1\cap V_2)$ is a $\mathbb{K}$-Nash diffeomorphism.
This means that $\psi_2$ is algebraic over $\mathbb{K}(\psi_1)$ on some open neighbourhood of the identity and, hence, by 
Proposition\,\ref{compatibilityAut}, the identity map from $(\widetilde{G},\mathcal{A}_1)$ to $(\widetilde{G},\mathcal{A}_2)$
is a locally $\mathbb{K}$-Nash isomorphism.

Next, we fix an open subset $W\sub G$ and an analytic section $s':W\rightarrow \widetilde{G}$ and we want to check that 
$s'$ is a locally $\mathbb{K}$-Nash map.
(We note that since $W$ is an open subset of $G$, it has itself the structure of a locally $\mathbb{K}$-Nash manifold.)
By Proposition\,\ref{characterisation of locally Nash maps}.$(3)$, it is enough to check that for each $a\in W$ there exist 
$g\in G$, $\tilde g\in \widetilde{G}$ and $U'\sub U$ such that the restriction of 
$(\phi _{\pi (\tilde g)}  \pi)   s'   \phi _g^{-1}$ to $\phi (U')$ is a $\mathbb{K}$-Nash map.
Taking $g:=a$, $\tilde g:=s'(a)$ and $U'$ such that $a\phi(U')$ is a semialgebraic open subset of $W$, we get that the previous 
map is the identity map on $\phi(U')$ and, hence, a $\mathbb{K}$-Nash map.
\end{proof}

To consider  non simply connected locally $\mathbb{K}$-Nash groups, we will be interested in taking  quotients of locally $\mathbb{K}$-Nash groups by normal discrete subgroups, next remark shows that doing so we keep within  our category.

\begin{remark}\label{quotients by normal subgroups}
Let $(G,\cdot, \phi |_U)$ be a locally $\mathbb{K}$-Nash group and let $\Lambda$ a normal discrete subgroup of $G$.
Then, $(G/\Lambda, \cdot  ,\phi |_U)$ is a locally $\mathbb{K}$-Nash group and the projection map is a locally 
$\mathbb{K}$-Nash map.
\end{remark}
\begin{proof}
We can provide to $G/\Lambda$ a structure of locally $\mathbb{K}$-Nash group as follows.
Let $\pi:G\rightarrow G/\Lambda$ be the canonical projection of $G$ onto $G/\Lambda$.
Take $(U,\phi)$ a chart of $G$ such that $U$ only intersects $\Lambda$ in the identity of $G$ and define
$\psi:\pi(U) \rightarrow \phi(U): \pi (u) \mapsto \phi(u)$.
Since $G/\Lambda$ has a natural structure of (complex or real)  analytic group 
and $(\pi(U),\psi )$ obviously satisfies $(i)$ and $(ii)$ of Fact\,\ref{compatibility0} (because $(U,\phi)$ -- being a chart of $G$ -- satisfies both properties),  there  must exist $V\sub \pi (U)$ such that $\mathcal{A}_{(V,\psi)}$ is a $\mathbb{K}$-Nash 
atlas for $G/\Lambda$.
When $G/\Lambda$ is equipped with this locally $\mathbb{K}$-Nash structure, the projection $\pi$ is a locally $\mathbb{K}$-Nash map.
\end{proof}

\section{Locally $\mathbb{C}$-Nash groups and algebraic groups}
This section is dedicated to  check that there is a natural adaptation to our context of classical results concerning the relation between the algebraic and the analytic structure of complex algebraic groups.

Following the notation of \cite{Poizat}, recall that an (abstract) algebraic group is a group $G$ together with a finite covering of 
subsets $Y_1,\ldots,Y_d$ of $G$ with bijections $f_i$ from $Y_i$ onto an affine algebraic set $X_i$, for $i=1,\ldots ,d$,
such that:

\noindent
(1) For each pair $(i,j)$, the set $X_{i,j}:= f_i(Y_i \cap Y_j)$ is a Zariski open subset of $X_i$ and $f_{i,j} := f_j   f_i^{-1}$ is locally rational.
Recall that locally rational means that for each point of $X_{i,j}$ there exists a smaller Zariski open neighbourhood of the point where the map is regular,
that is, $f_{i,j}$ can be written as the quotient of two polynomials whose denominator does not vanish in that neighbourhood.

\noindent
(2)  For each triple $(i,j,k)$, both $$\{(f_i(y_1),f_j(y_2)) \suchthat y_1y_2\in Y_k\}\text{ and }\{f_i(y) \suchthat y^{-1}\in Y_j \}$$ are respectively
Zariski open subsets of $X_i\times X_j$ and $X_i$ and the maps  
\[
\{(f_i(y_1),f_j(y_2)) \suchthat y_1y_2\in Y_k\} \rightarrow X_k: (f_i(y_1),f_j(y_2)) \mapsto f_k(y_1y_2)
\]
and
\[
\{f_i(y) \suchthat y^{-1}\in Y_j\} \rightarrow X_j: f_i(y) \mapsto f_j(y^{-1})
\]
are locally rational.

We call each pair $(Y_i,f_i)$ a \emph{Zariski chart} of $G$.
A subset $A$ of $G$ is Zariski closed if $f_i(A\cap Y_i)$ is a Zariski closed subset of $X_i$ for each $i=1,...,d$. Given an arbitrary subset $A$ of $G$, there is an smallest closed subset $\overline{A}^\text{zar}$ of $G$ containing $A$, and called the \emph{Zariski closure} of $A$ in $G$ (otherwise, there we would be an infinite descending chain of Zariski closed subsets of  $X_i$ for some $i=1,\ldots,d$, a contradiction). 

We say that an (abstract) algebraic group is irreducible if its corresponding underlying variety is irreducible, that is, if it cannot be written as the union of two proper closed subsets. Finally, we say that \em $G$ is defined over $\R$ \em if all the relevant algebraic sets and locally rational functions are defined with parameters from $\R$. 

We first show that a complex algebraic group has a natural structure of locally $\mathbb{C}$-Nash group.
We recall that, given an affine algebraic set $X\sub \mathbb{C}^m$, the set of smooth points of $X$ is
$$
\text{Smooth}(X)=\{a\in X \suchthat \dim(T_{X,a})=\dim(X):=\min\{\dim(T_{X,b}) \suchthat b\in X\} \},
$$
where $T_{X,a}$ denotes the Zariski tangent space. The set $\text{Smooth}(X)$ is an open Zariski subset of $X$ and it has a natural structure of complex analytic submanifold of $\mathbb{C}^m$. For, if $a\in \text{Smooth}(X)$ then there are indexes $i_1,\ldots,i_n$,  where $n=\dim(X)$, such that $\{dx_{i_j}\}_{j=1,\ldots, n}$ are linearly independent in $T_{X,a}$. If we denote $\pi:\C^m\rightarrow \C^n$ the $(i_1,\ldots,i_n)$-projection, then there is an open ball $B$ centered at $a$ with $\pi(B\cap X)$ open in $\C^n$ and an analytic section $s: \pi(B\cap X)\rightarrow X\subseteq \C^{m}$ of $\pi$. Moreover, since $U_a:=X\cap B$ is semialgebraic and $s$ is the inverse of the semialgebraic map $\pi_a:=\pi|_{U_a}$, we  get that also $s$ is  semialgebraic. Finally, the collection of charts $(U_a,\pi_a)$ for $a\in \text{Smooth}(X)$ is an atlas of a complex analytic manifold (see e.g. \cite[Cor.\,1.26]{Mumford} for details).

In our case, given a complex algebraic group $G$, let $\text{Smooth}(G)$ be the set of all preimages of the smooth points by the Zariski charts. Then, clearly $\text{Smooth}(G)=G$. 
Indeed, given $g\in G$ and $h\in \text{Smooth}(G)$, we consider the map $G\rightarrow G:x\mapsto gh^{-1}x$.
Since this map is a rational isomorphism in the Zariski charts, we get that the image of $h$ is also a smooth point.
In particular, $G$ has a canonical 
$n$-dimensional complex manifold structure, for some $n$. {\it Henceforth, when we refer to open subsets of $G$ we mean open with respect to this complex manifold structure}.

\begin{lemma}\label{C-Nash structure} 
Let $G$ be a $n$-dimensional complex algebraic group. 
Then, there exists a $\mathbb{C}$-Nash atlas $\mathcal{A}$ such that $G$  equipped with $\mathcal{A}$ is a locally $\mathbb{C}$-Nash group. 
Moreover, if $(Y,f)$ is a Zariski chart of the identity of $ G$, then there exist a projection 
$\pi:\mathbb{C}^m\rightarrow \mathbb{C}^{n}$ on some coordinates and an open subset $U\sub Y$ such that $\mathcal{A}=\mathcal{A}_{(U,\pi  f|_U)}$. 
\end{lemma}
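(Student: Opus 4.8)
The plan is to build the whole atlas from a single chart of the identity extracted from a Zariski chart, and then invoke Fact\,\ref{compatibility0}. Concretely, fix a Zariski chart of the identity $(Y,f)$ with $f:Y\to X\sub\mathbb{C}^m$ an affine algebraic set. Since $\text{Smooth}(G)=G$, the point $a:=f(e)$ is a smooth point of $X$, so the construction recalled just before the lemma applies: there is a coordinate projection $\pi:\mathbb{C}^m\to\mathbb{C}^n$, an open ball $B$ centred at $a$ with $\pi(B\cap X)$ open in $\mathbb{C}^n$, and an analytic semialgebraic section $s:\pi(B\cap X)\to X$ of $\pi$. Setting $\phi:=\pi\circ f$ on a small open $U\ni e$ with $f(U)\sub B\cap X$, I obtain a chart $(U,\phi)$ of the identity whose inverse satisfies $\phi^{-1}=f^{-1}\circ s$; in particular $f\circ\phi^{-1}=s$, and $s$ is $\mathbb{C}$-Nash by Proposition\,\ref{complex Nash}.

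The heart of the argument is to verify conditions $(i)$ and $(ii)$ of Fact\,\ref{compatibility0} for $(U,\phi)$. For $(i)$, using $f\circ\phi^{-1}=s$ together with the description of multiplication in the Zariski chart, I would compute
\[
\phi\,m(\phi^{-1},\phi^{-1})=\pi\circ\mu\circ(s\times s),
\]
where $\mu(f(y_1),f(y_2))=f(y_1y_2)$ is the (locally rational) multiplication map of the algebraic group. The only delicate point is that $\mu$ lives on $X\times X$, which is not open in $\mathbb{C}^{2m}$; to compose within the $\mathbb{C}$-Nash category I would replace $\mu$ near $(a,a)$ by a map $R$ given by quotients of polynomials, regular (non-vanishing denominators) on an open neighbourhood $\Omega\sub\mathbb{C}^{2m}$ of $(a,a)$ and agreeing with $\mu$ on $(X\times X)\cap\Omega$. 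Such an $R$ is analytic and algebraic over $\mathbb{C}(\id)$, hence $\mathbb{C}$-Nash on a semialgebraic ball. Since a small $s\times s$ maps $\phi(U')\times\phi(U')$ into $X\times X$ inside the domain of regularity of $R$, one has $\pi\circ R\circ(s\times s)=\pi\circ\mu\circ(s\times s)$, now a genuine composition of $\mathbb{C}$-Nash maps, and Lemma\,\ref{basic}.$(2)$ settles $(i)$.

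For $(ii)$, I would argue that for fixed $g\in G$ the inner automorphism $c_g:y\mapsto g^{-1}yg$ is a morphism of the algebraic group, being a composition of a left and a right translation, and hence locally rational in the Zariski charts: evaluating the two multiplication maps at the fixed arguments corresponding to $g$ and $g^{-1}$, and transporting through the (locally rational) change-of-chart maps back to $(Y,f)$, yields a locally rational map defined near $a$ (which is mapped back near $a$, since $c_g$ fixes $e$). Representing it near $a$ by a map regular on an open subset of $\mathbb{C}^m$, precomposing with $s$ and postcomposing with $\pi$, gives $\phi\,c_g\,\phi^{-1}=\pi\circ(f\,c_g\,f^{-1})\circ s$ as a composition of $\mathbb{C}$-Nash maps, so $(ii)$ holds. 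With $(i)$ and $(ii)$ established, Fact\,\ref{compatibility0} provides an open $V\sub U$ with $\mathcal{A}_{(V,\phi)}$ a $\mathbb{C}$-Nash atlas making $G$ a locally $\mathbb{C}$-Nash group; renaming $V$ as $U$ gives the asserted $\mathcal{A}=\mathcal{A}_{(U,\pi\circ f|_U)}$.

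I expect the main obstacle to be the bookkeeping in $(ii)$: unlike $(i)$, the elements $g$ and $g^{-1}$ need not lie in the identity chart $(Y,f)$, so one must genuinely pass through several Zariski charts and their transition maps, and check that after fixing $g$ the resulting map is defined and regular on a neighbourhood of $a=f(e)$ mapping back into such a neighbourhood. Everything else reduces to the clean principle that a locally rational map extends to a map regular on an open subset of affine space, which is $\mathbb{C}$-Nash by Fact\,\ref{analytic algebraic map}, and that $\pi$ (linear) and $s$ (analytic and semialgebraic) are $\mathbb{C}$-Nash, so the relevant compositions stay in the $\mathbb{C}$-Nash category by Lemma\,\ref{basic}.
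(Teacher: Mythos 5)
Your proposal is correct and follows essentially the same route as the paper: extract the chart $(U,\pi\circ f|_U)$ from a Zariski chart of the identity via the smooth-point projection and its $\mathbb{C}$-Nash section $s$, verify conditions $(i)$ and $(ii)$ of Fact\,\ref{compatibility0} by writing the group operations in this chart as $\pi\circ(\text{locally rational map})\circ(s,s)$, and conclude with that fact. The only difference is that you make explicit a detail the paper elides, namely replacing the locally rational multiplication (defined only on the non-open set $X\times X$) by a regular representative on an open neighbourhood in $\mathbb{C}^{2m}$ so that the composition genuinely stays in the $\mathbb{C}$-Nash category; this is a correct and welcome clarification rather than a departure.
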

\begin{proof}
By the paragraph above, we already have an analytic structure on $G$, so it is enough to check properties \emph{(i)} and \emph{(ii)} of Fact\,\ref{compatibility0}.
Let $f:Y\rightarrow X\sub \mathbb{C}^m$ be a Zariski chart of the identity $e\in Y \sub G$.
Let $U\sub Y$ be an open neighbourhood of $e$ for which there exists a projection $\pi:\mathbb{C}^m\rightarrow \mathbb{C}^{n}$ on some coordinates such that 
$(U,\pi f|_{U})$ is a chart of the analytic structure of $G$. 
Note that we can assume that $f(U)$ is a semialgebraic set, so that $V:=(\pi f)(U)$ is also semialgebraic. 
In particular, the inverse $s:V \rightarrow f(U)\sub \mathbb{C}^m$ of the projection $\pi:f(U)\rightarrow V$ is semialgebraic and analytic, {\it i.e.}, a $\mathbb{C}$-Nash map. 

Since $G$ is an algebraic group, there exists an open neighbourhood  $U'\sub U$ of $e$ such that
\[
fm (f^{-1},f^{-1}):f(U')\times f(U')\rightarrow f(U)\sub \mathbb{C}^m:(f(y_1),f(y_2)) \mapsto f(y_1y_2) 
\]
satisfies that each coordinate function is the quotient of two polynomials whose denominator does not vanish, where $m:G\to G:(g,h)\mapsto g\cdot h$. 
Without loss of generality, we can assume that $f(U')$ is semialgebraic, so the map above is a $\mathbb{C}$-Nash map. 
Finally, the composition
\[
\phi \,m  (\phi^{-1},\phi^{-1})|_{ \phi(U')\times \phi(U')}=\pi \big(fm (f^{-1},f^{-1})\big) (s,s)|_{ \phi(U')\times \phi(U')}, 
\]
where $\phi:=\pi f|_{U}$, is a $\mathbb{C}$-Nash map, as required.

Similarly, one shows that for each $g\in G$ there exists $U_g\sub U$ such that $\phi \circ -^{g} \circ \phi^{-1}|_{\phi(U_g)}$ is a $\mathbb{C}$-Nash map making use of the fact  that conjugation by $g$ is a rational map in the Zariski charts. 
\end{proof}
\begin{remark}\label{rmk:algGRNash}1) Let us note that in the above proof we can endow $G$ with a \em finite \em $\C$-Nash atlas $\mathcal{A}$, i.e., the group $G$ is a \emph{$\C$-Nash group}. Let $f:Y\rightarrow X\subseteq \C^m$ be a Zariski chart of $G$. We can assume that $X$ is irreducible and therefore connected. Fix  $i_1,\ldots,i_n$ in $\{1,\dots, m\}$ and let $U$ be the Zariski open subset of $X$ of points $a\in X$ such that $\{dx_{i_j}\}_{j=1,\ldots, n}$ are linearly independent in $T_{X,a}$. Clearly, it suffices to prove that $U$ is the union of finitely finitely many $\C$-Nash charts. Note that $U$ is again connected because its complement is a Zariski closed set of the irreducible $X$. Let $\pi:\C^m\rightarrow \C^n$ by the $(i_1,\ldots,i_n)$-projection. Then, by what we wrote just before  Lemma \ref{C-Nash structure}, the continuous semialgebraic map  $\pi|_U:U\rightarrow \pi(U)$ is a covering map. In parti\-cular, it must be a finite $d$-sheeted covering for some $d\in \N$. Since $\pi(U)$ is a semialgebraic set, there are finitely many semialgebraic simply-connected open subsets $U_1,\ldots,U_\ell$ of $\pi(U)$ whose union is $\pi(U)$ (see \cite{Wilkie05}). Then, each $V_i:=\pi^{-1}(U_i)$ is an open semialgebraic subset of $U$ and we can consider the restriction $\pi|_{V_i}:V_i \rightarrow U_i$, which is again a finite cover. Since $U_i$ is simply-connected, there are open  semialgebraic disjoint  subsets $V_{i,1},\ldots, V_{i,d}$ of $V_i$ whose union is $V_i$ and such that $\pi_{ij}:=\pi|_{V_{i,j}}:V_{i,j}\rightarrow U_i$ is a homeomorphism. Note that $U=\bigcup_{i,j} V_{ij}$ and each pair $(V_{ij},\pi_{ij})$ is a chart of the complex manifold structure of $G$, as required.

\noindent 2) If the irreducible algebraic group $G$ in Lemma \ref{C-Nash structure} is defined over $\R$, then the set of real points $G(\R)$ of $G$ is a real algebraic group. $G(\R)$ has a canonical structure of Nash group which naturally comes from the complex one.
 Indeed, in the proof of Lemma \ref{C-Nash structure} we can pick $U$ such that $f(U)$ is an open subset of $X$ invariant with respect to complex conjugation. Since $\overline{\pi(\overline{x})}=\pi(x)$ for all $x\in f(U)$, we clearly have that the section $s:V\rightarrow f(U)$ must satisfy $\overline{s(\overline{x})}=s(x)$ for all $x\in V$, where $V=\pi f(U)$. Therefore if we set
$U(\R):=f^{-1}(f(U)\cap \R^m)$ then we get that the chart $(U(\R),\pi f|_{U(\R)})$ induces a Nash group structure on the set of
 real points $G(\R)$ of $G$. 

Moreover, recall that by Chevalley's theorem \cite[Lem.\,1 \& Thm.\,16]{Rosenlicht}, abstract algebraic groups are open Zariski subsets of algebraic subsets of a projective space $\mathbb{P}^m(\C)$ for some $m\in \N$. If $G$ is defined over $\R$ then the set of real points $G(\R)$ of $G$ is an open Zariski subset of an algebraic subset of the real projective space $\mathbb{P}^m(\R)$. On the other hand, the real projective space $\mathbb{P}^m(\R)$ is biregularly isomorphic to an algebraic subset of $\R^{\ell}$ for some $\ell\in \N$ (see \cite[Thm.\,3.4.4]{Bochnak_Coste_Roy}). Therefore, there is a Nash embedding of the connected component $G(\R)^0$ of $G$ into $\R^\ell$, that is, the Nash group $G(\R)^0$ is \emph{Nash affine}.

In affine Nash  manifolds there are no infinite strictly descending chains of Nash subsets, where a Nash subset is the common zero set of finitely many Nash functions (see \cite[\S 2.8]{Bochnak_Coste_Roy}). In particular, we can consider the \emph{Nash closure} of any subset of an affine Nash group.
\end{remark}

We now analyze locally $\mathbb{C}$-Nash maps between algebraic groups. The category of algebraic groups is not a full subcategory of the category of analytic groups, we will show in Corollary \ref{ALGisomorfismo} that it is indeed a full subcategory of the category of locally $\C$-Nash groups. Recall that given topological groups $G_1$ and $G_2$, a \emph{local isomorphism} is a homeomorphism $f:W\rightarrow f(W)$ between open subsets $W$ and $f(W)$ of $G_1$ and $G_2$ respectively, such that $f(xy)=f(x)f(y)$ for all $x,y\in W$ with $xy\in W$.

\begin{theorem}\label{ALGisogenia}
Let $G_1$ and $G_2$ be irreducible complex algebraic groups. 
If there exist an open semialgebraic subset $W$ of the identity of $G_1$ and a locally $\mathbb{C}$-Nash map $g:W\rightarrow G_2$ that is a local isomorphism, then $G_1$ and $G_2$ are biregularly isogenous.
\end{theorem}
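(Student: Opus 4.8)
The plan is to form the graph of $g$ near the identity, take its Zariski closure inside the complex algebraic group $G_1\times G_2$, and show that this closure is an irreducible algebraic subgroup that projects onto each factor by an isogeny. First I would record that, since $g$ is a local isomorphism, $g(e)=e$ and the germ at $(e,e)$ of the graph $\Gamma_g=\{(x,g(x)):x\in W\}$ is a local subgroup of $G_1\times G_2$: for $x,y$ near $e$ one has $(x,g(x))\cdot(y,g(y))=(xy,g(xy))\in\Gamma_g$ and $(x,g(x))^{-1}=(x^{-1},g(x^{-1}))\in\Gamma_g$. Because $g$ is only \emph{locally} $\C$-Nash, I would shrink $W$ so that, in the charts of $G_1$ and $G_2$ supplied by Lemma\,\ref{C-Nash structure}, $g$ is a genuine $\C$-Nash map, and then fix a connected semialgebraic neighbourhood $\Gamma^0\subseteq\Gamma_g$ of $(e,e)$. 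Such a $\Gamma^0$ is simultaneously complex-analytic of complex dimension $n:=\dim G_1=\dim G_2$ and semialgebraic. Put $H:=\overline{\Gamma^0}^{\mathrm{zar}}$; as $\Gamma^0$ is connected, $H$ is irreducible.

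The key dimension estimate is $\dim_\C H=n$. For the upper bound I would use that $\Gamma^0$, being both analytic and semialgebraic, is $\C$-Nash by Proposition\,\ref{complex Nash}, hence algebraic over $\C(\id)$ in the charts; consequently $\Gamma^0$ is contained in a complex algebraic set of dimension $n$, so $\dim_\C H\le n$. This is exactly the point where the complex rigidity is essential: an arbitrary analytic graph (say that of the exponential) could have Zariski closure of strictly larger dimension, and it is the semialgebraicity that rules this out. For the lower bound, the first coordinate projection $p_1\colon G_1\times G_2\to G_1$ sends $\Gamma^0$ onto a Euclidean-open neighbourhood of $e$ in the irreducible group $G_1$, so $\overline{p_1(H)}^{\mathrm{zar}}=\overline{p_1(\Gamma^0)}^{\mathrm{zar}}=G_1$ and therefore $\dim_\C H\ge\dim G_1=n$; hence $\dim_\C H=n$.

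The step I expect to be the main obstacle is showing that $H$ is an algebraic \emph{subgroup}, since $\Gamma^0$ is only a local subgroup. Fix $h_0\in\Gamma^0$. By continuity of multiplication there is a smaller connected neighbourhood $\Gamma^{00}\subseteq\Gamma^0$ of $(e,e)$ with $h_0\cdot\Gamma^{00}\subseteq\Gamma^0$. Now $\Gamma^{00}$ is an $n$-dimensional connected analytic piece of the irreducible $n$-dimensional variety $H$, so $\overline{\Gamma^{00}}^{\mathrm{zar}}=H$; since left translation $L_{h_0}$ is a biregular automorphism of $G_1\times G_2$ it commutes with Zariski closure, whence
\[
h_0H=L_{h_0}\bigl(\overline{\Gamma^{00}}^{\mathrm{zar}}\bigr)=\overline{L_{h_0}(\Gamma^{00})}^{\mathrm{zar}}\subseteq\overline{\Gamma^0}^{\mathrm{zar}}=H .
\]
Thus every $h_0\in\Gamma^0$ lies in the transporter $S:=\{h:hH\subseteq H\}=\bigcap_{x\in H}Hx^{-1}$, which is Zariski closed; hence $H=\overline{\Gamma^0}^{\mathrm{zar}}\subseteq S$, i.e.\ $HH\subseteq H$. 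Finally, a Zariski-closed subsemigroup of an algebraic group containing the identity is a subgroup: for $h_0\in H$ the descending chain $H\supseteq h_0H\supseteq h_0^2H\supseteq\cdots$ stabilises by Noetherianity, forcing $h_0H=H$ and hence $h_0^{-1}\in H$. So $H$ is an irreducible algebraic subgroup of $G_1\times G_2$ of dimension $n$.

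It then remains to read off the isogenies. The two coordinate projections restrict to regular group homomorphisms $p_i\colon H\to G_i$. Each image $p_i(H)$ is a constructible subgroup of the irreducible group $G_i$ whose Zariski closure is all of $G_i$ (by the computation above), hence contains a nonempty open set and therefore equals $G_i$; and since $\dim H=n=\dim G_i$, each kernel $\ker p_1=H\cap(\{e\}\times G_2)$ and $\ker p_2=H\cap(G_1\times\{e\})$ is zero-dimensional, hence finite. Thus each $p_i$ is a surjective regular homomorphism with finite kernel, i.e.\ an isogeny, and the connected algebraic group $H$ with the regular isogenies $p_1,p_2$ exhibits $G_1$ and $G_2$ as biregularly isogenous, as required.
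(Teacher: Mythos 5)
Your proof is correct and follows essentially the same strategy as the paper's: take the Zariski closure $H$ of the graph of $g$ near the identity, show it is an irreducible $n$-dimensional algebraic subgroup of $G_1\times G_2$, and read off the isogenies from the two coordinate projections (surjective by density of the image, finite kernel by the dimension count). The only point of divergence is the subgroup step: the paper first stabilises the Zariski closure over the family of symmetric semialgebraic neighbourhoods $W_1$ with $W_1W_1\subseteq W$ (a finite intersection by Noetherianity) and then uses the transporter sets $L_a$, $R$, $I$, whereas you obtain the invariance of the closure under shrinking directly from irreducibility together with the dimension estimate $\dim H=n$, and recover inverses from the descending chain $H\supseteq h_0H\supseteq h_0^2H\supseteq\cdots$; both mechanisms are sound, and yours is slightly more self-contained once the dimension computation is in hand.
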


\begin{proof}In the first part of the proof we will assume that $g:W\rightarrow G_2$ is just a locally $\mathbb{C}$-Nash local homomorphism, that is, satisfies that $g(xy)=g(x)g(y)$ for all $x,y\in W$ such that $xy\in W$. 

For each $i=1,2$, let $n_i$ denote the dimension of $G_i$ and let $f_i:Y_i\rightarrow X_i$ be a Zariski chart of the identity of $G_i$, where $Y_i \sub G_i$ and $X_i\sub \mathbb{C}^{m_i}$, and $f_i$ sends the identity to $0$. Let $V_i$ be an open (euclidean) subset of $X_i$ containing $0$ for which there exist projections $\pi_i:V_i \rightarrow \mathbb{C}^{n_i}$ over some coordinates such that $(f_i^{-1}(V_i),\pi_i\circ f_i)$ is a chart of the locally $\mathbb{C}$-Nash structure of $G_i$. 
We can assume that the projections are over the first $n_i$ coordinates. 

Note that, shrinking $W$, we can also assume that $g(W)\sub Y_2$. 
Moreover, we can assume that the open subset $U:=(\pi_1\circ f_1)(W)$ of $\mathbb{C}^{n_1}$ is contained in $\pi_1(V_1)$ and, therefore, 
$(\pi_2\circ f_2)\circ g\circ (\pi_1 \circ f_1)^{-1}|_U=\pi_2\circ (f_2 \circ g \circ f_1^{-1})\circ \pi_1^{-1}|_U$ is a $\mathbb{C}$-Nash map.

In other words, if we denote $\widetilde{g}=f_2 \circ g \circ f_1^{-1}|_{\pi_1^{-1}(U)}=(\widetilde{g}_1,\ldots,\widetilde{g}_{m_2})$ 
then there exist polynomials $Q_j\in \mathbb{C}[\tt{z}_1,\ldots ,\tt{z}_{n_1},\tt{y}]$, $Q_j\neq 0$, such that $$Q_j(z_1,\ldots,z_{n_1},\widetilde{g}_j(\pi_1^{-1}(z_1,\ldots,z_{n_1})))=0$$ for $(z_1,\ldots,z_{n_1})\in U$ 
and $j=1,\ldots,m_2$. 
So, if we consider
\[
Z_j=\{(z_1,\ldots,z_{m_1},y_1,\ldots,y_{m_2}) \suchthat Q_j(z_1,\ldots,z_{n_1},y_j)=0\}\sub \mathbb{C}^{m_1}  \times \mathbb{C}^{m_2} 
\]
and the Zariski closed subset $Z:= (X_1\times X_2)\cap \bigcap^n_{j=1} Z_{j}$ of $X_1\times X_2$ then
the graph of $\widetilde{g}$ is contained in $Z$. 
Furthermore, note that $Z$ is $n_1$-dimensional near $0$. 
Now, since the graph of $\widetilde{g}$ is an analytic manifold, we also deduce that $Z$ must be irreducible.

All in all, we have shown that $A_W:=\text{Graph}(g|_{W})\sub G_1\times G_2$ is contained in its Zariski closure $Z_W$ in $Y_1\times Y_2$, 
which is a closed Zariski irreducible subset of $Y_1\times Y_2 \sub G_1\times G_2$ and $n_1$-dimensional near $0$.
Denote by $B_W:=\overline{A_W}^\text{Zar}$ the Zariski closure of $A_W$ in $G_1\times G_2$. 
Let us show that $B_W=\overline{Z_W}^\text{Zar}$. 

Indeed, as $A_W\sub Z_W$ we have $B_W\sub \overline{Z_W}^\text{Zar}$. 
On the other hand, $A_W$ is contained in the Zariski closed subset $B_W\cap (Y_1\times Y_2)$ of $Y_1\times Y_2$ and, therefore, $Z_W$ is also contained, 
so $\overline{Z_W}^\text{Zar}\sub B_W$. 
Note that $B_W$ is irreducible of dimension $n_1$ near $0$ because the closure of an irreducible set is 
irreducible and $B_W\cap (Y_1\times Y_2)=\overline{Z_W}^\text{Zar}\cap (Y_1\times Y_2)=Z_W$.

Now, let $\mathcal{F}$ be the family of open (in the Euclidean topology) semialgebraic subsets 
$W_1$ of $W$  such that $W_1W_1 \sub W$ and $W_1=W_1^{-1}$. 
For each $W_1\in \mathcal{F}$, we consider the Zariski closure $B_{W_1}$ of 
$A_{W_1}:=\text{Graph}(g|_{W_1})\sub G_1\times G_2$ in $G_1\times G_2$, which is again irreducible and 
$n_1$-dimensional.
Let us denote $B=\bigcap_{W_1\in \mathcal{F}}B_{W_1}$. 
This intersection is finite, so $B=B_{W_1}$ for some $W_1\in \mathcal{F}$.

Let us see that $B$ is a subgroup of $G_1\times G_2$. 
Let $W_2\in \mathcal{F}$, with $W_2W_2\sub W_1$, and note that $B_{W_2}=B_{W_1}=B$. 
Pick a point $a\in A_{W_2}$ and consider the Zariski closed subset
\[
L_a=\{x\in G_1\times G_2 \suchthat ax \in B\}.
\]
Since $aA_{W_2}\sub A_{W_2}A_{W_2} \sub A_{W_1} \sub B$, we get $A_{W_2} \sub L_a$ and, therefore, $B\sub L_a$. 
On the other hand, consider the Zariski closed subset
\[
R=\{a\in G_1\times G_2 \suchthat aB \sub B\}. 
\]
We have showed that $A_{W_2}\sub R$, so that $B\sub R$. 
Finally, let us show that $B^{-1}=B$. 
Take the Zariski closed set $I=\{a\in G_1\times G_2 \suchthat a^{-1}\in B \}$. 
Since $W_2^{-1}=W_2$, we have that $A_{W_2}\sub I$ and, therefore, $B\sub I$, as required.

We have proved that $B$ is an irreducible algebraic subgroup of $G_1\times G_2$ of dimension $n_1$. 
As $B$ is a group, it is connected and pure dimensional. 

\smallskip
In the second part of the proof, we assume that $g:W\rightarrow G_2$ is actually a $\C$-Nash local isomorphism. In particular, note that $n:=n_1=n_2$ because $G_1$ and $G_2$ have the same dimension as complex analytic manifold and, therefore, as algebraic groups.
Consider the projection $p_1:B\rightarrow G_1$. Note that $p_1(B)$ is a $n$-dimensional subgroup of the 
irreducible $G_1$, 
so that $p_1$ is onto. 
We have that $\ker(p_1)=e\times F_2$, where $F_2$ is a subgroup of $G_2$. 
Moreover, since $\dim(B)=\dim(G_1)=n$, we deduce that $F_2$ is finite. 
Similarly, the projection $p_2:B\rightarrow G_2$ is onto and $\ker(p_2)=F_1\times e$, 
where $F_1$ is a finite subgroup of $G_1$. 
Therefore, $G_1/F_1=B/(F_1\times F_2)=G_2/F_2$, where the  equalities occur in the algebraic category. Hence, we conclude that $G_1$ and $G_2$ are isogenous.
\end{proof}

\begin{cor}\label{ALGisomorfismo}
Any locally $\mathbb{C}$-Nash homomorphism between irreducible complex algebraic groups is a biregular morphism.
\end{cor}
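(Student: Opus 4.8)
The plan is to recover $\alpha$ from the Zariski closure of its graph, after first identifying that closure with the graph itself. Set $\Gamma := \{(x,\alpha(x)) : x\in G_1\}\sub G_1\times G_2$. Since $\alpha$ is a homomorphism, $\Gamma$ is a subgroup; since $\alpha$ is continuous, $x\mapsto(x,\alpha(x))$ is a homeomorphism of $G_1$ onto $\Gamma$, so $\Gamma$ is connected and the coordinate projections restrict to continuous maps on it. First I would restrict $\alpha$ to an open semialgebraic neighbourhood $W$ of the identity on which, reading it in charts at $e$ and $\alpha(e)=e$, it is a $\C$-Nash local homomorphism, and then run the construction in the first part of the proof of Theorem\,\ref{ALGisogenia} verbatim. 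That construction uses only that $\alpha|_W$ is a local homomorphism (never a local isomorphism), so it applies even when $\alpha$ has positive-dimensional kernel, and it yields an irreducible algebraic subgroup $B\sub G_1\times G_2$ with $\dim B=n_1=\dim G_1$ that contains $\mathrm{Graph}(\alpha|_W)$. Near $(e,e)$ this graph is an $n_1$-dimensional analytic submanifold of the $n_1$-dimensional smooth group $B$, hence a euclidean-open subset of $B$.

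The key step is to upgrade this local coincidence to the global equality $\Gamma=B$. Fix a euclidean neighbourhood $\Omega$ of $(e,e)$ with $\gamma:=B\cap\Omega=\mathrm{Graph}(\alpha|_W)\cap\Omega$. Then $\gamma$ is at once an open neighbourhood of the identity in $B$ and, through the homeomorphism $G_1\cong\Gamma$, an open neighbourhood of the identity in $\Gamma$. An irreducible complex algebraic group is connected, and a connected topological group is generated by any neighbourhood of its identity; applying this to both $B$ and $\Gamma$ gives $B=\langle\gamma\rangle\sub\Gamma$ and $\Gamma=\langle\gamma\rangle\sub B$, so $\Gamma=B$. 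In particular $\Gamma$ is Zariski closed, i.e.\ an algebraic subgroup of $G_1\times G_2$.

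To conclude, consider the projection $p_1:B=\Gamma\rightarrow G_1$, a homomorphism of algebraic groups. It is bijective: surjective because $\alpha$ is defined on all of $G_1$, and injective because $\Gamma$ is the graph of a function; thus $\ker p_1$ is trivial. In characteristic zero a surjective homomorphism of algebraic groups with trivial kernel is an isomorphism, so $p_1$ is biregular and $p_1^{-1}$ is regular. As $p_2:B\rightarrow G_2$ is regular, being a coordinate projection restricted to a closed algebraic subgroup, the identity $\alpha=p_2\circ p_1^{-1}$ exhibits $\alpha$ as a regular homomorphism, i.e.\ a biregular morphism, as required. I expect the only genuine obstacle to lie in the second paragraph---checking that the local graph $\gamma$ is simultaneously an identity neighbourhood in $B$ and in $\Gamma$ with compatible group structures, and drawing $\Gamma=B$ from it; the bijectivity of $p_1$, its regularity in characteristic zero, and the factorisation of $\alpha$ are then formal.
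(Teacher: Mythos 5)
Your proof is correct and follows essentially the same route as the paper: both restrict $\alpha$ to a neighbourhood of the identity, invoke the first part of the proof of Theorem\,\ref{ALGisogenia} (which indeed only needs a local homomorphism) to produce the irreducible $n_1$-dimensional algebraic subgroup $B$, and then identify $B$ with the full graph of $\alpha$ to read off regularity. The only differences are cosmetic: where you obtain $B=\Gamma$ by observing that both connected groups are generated by the common identity neighbourhood $\gamma$, the paper argues that $H\cap B$ is an analytic subset of the connected graph $H$ containing an open set, hence $H\subseteq B$, and then $H=B$ by equality of dimensions; and your factorisation $\alpha=p_2\circ p_1^{-1}$ with $p_1$ bijective (hence biregular in characteristic zero) makes explicit the final step that the paper leaves implicit.
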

\begin{proof}
Let $G_1$ and $G_2$ be  irreducible complex  algebraic groups of dimension $n_1$ and $n_2$ respectively. Let $g:G_1\rightarrow G_2$ be a locally $\mathbb{C}$-Nash homomorphism. 
Let $W$ be an small enough neighbourhood of the identity of $G_1$ and consider the restriction $g|_W:W\rightarrow G_2$. 
Let $B$ be the $n_1$-dimensional irreducible subgroup of $G_1\times G_2$ obtained in the first part of the proof of Theorem\,\ref{ALGisogenia} (noting that in that proof we only assume that $g$ is a locally $\C$-Nash local homomorphism). By construction and without loss of generality, we can assume that $B$ is the Zariski closure of $A:=\text{Graph}(g|_{W})\sub G_1\times G_2$. Now, let $H$ be the graph of $g$, which is an analytic connected subgroup of $G_1\times G_2$ of dimension $n_1$. 
Since $H\cap B$ is an analytic subset of the analytic connected manifold $H$ which contains an open set 
(because it contains $A$), it follows that $H\cap B=H$, {\it i.e.}, $H$ is a subgroup of $B$. 
Since $B$ is a connected group of dimension $n_1$ and $H$ is a $n_1$-dimensional subgroup, it follows that $B=H$. 
In particular, the isomorphism $g$ is a morphism between algebraic varieties, as required.
\end{proof}

We finish this section analyzing the set of real points of algebraic groups defined over $\R$, which by Remark \ref{rmk:algGRNash} has a canonical structure of locally Nash group. We recall that if $G$ is an irreducible algebraic group defined over $\R$ then  -- being smooth --  $G(\R)$ is a Zariski-dense subset of $G$ of real dimension equal to $\dim(G)$ (see  \cite[p.\,8]{Silhol}).

\begin{prop}Let $G_1$ and $G_2$ be irreducible complex  algebraic groups defined over $\mathbb{R}$. For $i=1,2$, let $G_i(\mathbb{R})$, $G_i(\mathbb{R})^o$ and $\widetilde{G_i(\mathbb{R})^o}$ denote the set of real points, its connected component and the universal covering respectively. Then the following are equivalent:

\noindent \emph{(1)} There is a Nash local isomorphism between  $\widetilde{G_1(\mathbb{R})^o}$ and $\widetilde{G_2(\mathbb{R})^o}$.

\noindent  \emph{(2)} There is a biregular isogeny between  $G_1$ and $G_2$ defined over $\mathbb{R}$. 

\noindent  \emph{(3)} $G_1(\mathbb{R})^o$ and $G_2(\mathbb{R})^o$ are Nash isogenous.

\end{prop}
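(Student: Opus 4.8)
The plan is to establish the cycle $(2)\Rightarrow(3)\Rightarrow(1)\Rightarrow(2)$, leaning on Theorem\,\ref{ALGisogenia} for the hard direction and on the real Nash structures described in Remark\,\ref{rmk:algGRNash} throughout.

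For $(2)\Rightarrow(3)$ I would unpack a biregular isogeny over $\R$ as an irreducible algebraic group $B$ defined over $\R$, together with finite normal subgroups $F_1,F_2$ and biregular surjections $p_i\colon B\rightarrow G_i$ over $\R$ with $\ker p_i$ finite. Restricting each $p_i$ to real points and passing to connected components gives Nash homomorphisms $p_i\colon B(\R)^o\rightarrow G_i(\R)^o$, since by Remark\,\ref{rmk:algGRNash} the restriction to real points of a map defined over $\R$ is Nash in the induced structures. Each restricted kernel is a subgroup of the finite group $F_i(\R)$, hence finite, and the image of $p_i$ is an open subgroup of the connected group $G_i(\R)^o$, hence all of it. Thus $B(\R)^o$ is a common Nash bridge and $G_1(\R)^o$, $G_2(\R)^o$ are Nash isogenous.

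For $(3)\Rightarrow(1)$ the Nash isogeny supplies a Nash group $B$ with finite-kernel Nash surjections onto both $G_i(\R)^o$; as finite kernels are discrete, these are Nash covering maps. By Proposition\,\ref{covering} the universal cover of each locally Nash group carries a unique locally Nash structure making the covering a locally Nash homomorphism, so $\widetilde{B}\cong\widetilde{G_1(\R)^o}$ and $\widetilde{B}\cong\widetilde{G_2(\R)^o}$ as locally Nash groups. Composing these yields a (global, hence a fortiori local) Nash isomorphism between the two universal covers, which is exactly $(1)$.

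The implication $(1)\Rightarrow(2)$ is the crux. Pulling the given local isomorphism $h$ down through the covering maps $\pi_i$ — which near the identity are local Nash diffeomorphisms by Proposition\,\ref{covering} — produces a Nash local isomorphism $g_\R\colon W_1\rightarrow W_2$ between neighbourhoods of the identity in $G_1(\R)^o$ and $G_2(\R)^o$. Since by Remark\,\ref{rmk:algGRNash} the real charts are restrictions of the complex ones, $g_\R$ is given in coordinates by real Nash functions, each real-analytic and satisfying a polynomial equation over $\R$; such a function extends to a complex-analytic function on a complex neighbourhood, and by analytic continuation the extension satisfies the same polynomial, so it is algebraic over $\C(\id)$ and hence $\C$-Nash. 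This complexifies $g_\R$ to a $\C$-Nash map $g\colon W\rightarrow G_2$ on an open $W\sub G_1$. The identity $g(xy)=g(x)g(y)$ holds on the Zariski-dense set of real points, so by analytic continuation it holds on all of $W$; and since $g_\R$ is a local diffeomorphism the complex differential of $g$ at the identity is an isomorphism, so $g$ is a $\C$-Nash local isomorphism. Theorem\,\ref{ALGisogenia} then yields that $G_1$ and $G_2$ are biregularly isogenous. The main obstacle — the genuinely new point over Theorem\,\ref{ALGisogenia} — is controlling the field of definition: in that proof the bridge is $B=\overline{\mathrm{Graph}(g|_W)}^{\mathrm{Zar}}$, and because $g$ is the complexification of the real map $g_\R$ this graph is invariant under complex conjugation, so $B$, the finite kernels of its projections, and the resulting isogeny are all defined over $\R$, giving $(2)$ and closing the cycle.
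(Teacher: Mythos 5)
Your proof is correct, but it closes the equivalence along a different cycle than the paper and puts the weight on a different implication. The paper proves $(2)\Rightarrow(1)$ (immediate), then $(1)\Rightarrow(3)$ by replaying the closure argument of Theorem\,\ref{ALGisogenia} entirely inside the real Nash category: since $G_1(\R)^o\times G_2(\R)^o$ is an affine Nash group (Remark\,\ref{rmk:algGRNash}), one takes \emph{Nash} closures of the graphs $\mathrm{Graph}(g|_{W_1})$ for $W_1$ in the family $\mathcal{F}$, and the stabilised intersection is a connected $n$-dimensional Nash subgroup inducing a Nash isogeny; finally $(3)\Rightarrow(2)$ is obtained by taking the Zariski closure of that graph first in the real algebraic group $G_1(\R)\times G_2(\R)$ and then in $G_1\times G_2$, which is automatically defined over $\R$. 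You instead make $(1)\Rightarrow(2)$ the hard step: you complexify the real Nash local isomorphism, feed it to Theorem\,\ref{ALGisogenia} as stated, and recover the field of definition from conjugation-invariance. This works, and it buys you a literal reuse of the complex theorem rather than a real-category replay of its proof; the price is that you must justify (i) that the complexification is a well-defined locally $\C$-Nash local isomorphism --- which holds because the real charts of Remark\,\ref{rmk:algGRNash} are restrictions of the complex ones and the algebraicity and homomorphism identities propagate from the real points (a maximal totally real set of uniqueness) by the identity theorem --- and (ii) that $B=\overline{\mathrm{Graph}(g|_W)}^{\mathrm{Zar}}$ is defined over $\R$, which follows since this closure coincides with the Zariski closure of the conjugation-invariant real graph $\mathrm{Graph}(g_\R)$ (again by the identity theorem), so Galois descent applies; for the literal conjugation-invariance of $\mathrm{Graph}(g|_W)$ you should also take $W$ itself conjugation-invariant. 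Your $(2)\Rightarrow(3)$ and $(3)\Rightarrow(1)$ are the routine directions that the paper compresses into ``$(2)$ implies $(1)$ is clear''.
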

\begin{proof}(2) implies (1) is clear. We show (3) implies (2).   The graph of the isogeny is a Nash subgroup of  $G_1(\mathbb{R})^o \times G_2(\mathbb{R})^o$ of dimension $n:=\dim G_1(\mathbb{R})=\dim G_2(\mathbb{R})$. Take the Zariski closure $H_{\text{real}}$ of the latter in the real algebraic group $G_1(\R) \times G_2(\R)$, which is an algebraic subgroup  of dimension $n$. Then, pick the Zariski closure $H$ of $H_{\text{real}}$ in $G_1\times G_2$, which is again an algebraic subgroup of dimension $n$. The projection over  the irreducible $G_1$ is a constructible subgroup of $G_1$ of dimension $n$, so it equals $G_1$. Similarly for $G_2$. Therefore $H$ induces an biregular isogeny defined over $\mathbb{R}$.

(1) implies (3). By (1) we have a Nash local isomorphism $g:W\rightarrow G_2(\mathbb{R})^o$,  where  $W$ is an open connected semialgebraic subset of $G_1(\mathbb{R})^o$. Recall that $G_1(\mathbb{R})^o \times G_2(\mathbb{R})^o$ is  Nash affine group by Remark \ref{rmk:algGRNash}.
Let $\mathcal{F}$ be the family of open connected semialgebraic subsets
$W_1$ of $W$  such that $W_1W_1 \sub W$ and $W_1=W_1^{-1}$. For each $W_1\in \mathcal{F}$, we consider the Nash closure $B_{W_1}$ of
$\text{Graph}(g|_{W_1})\sub G_1(\mathbb{R})^o \times G_2(\mathbb{R})^o$, which is connected and $n$-dimensional.
Let us denote $B=\bigcap_{W_1\in \mathcal{F}}B_{W_1}$.
This intersection is finite, so $B=B_{W_1}$ for some $W_1\in \mathcal{F}$.

It is easy to show that  $B$ is a Nash connected subgroup of $G_1(\mathbb{R})^o \times G_2(\mathbb{R})^o$ of dimension $n$, which induces a Nash isogeny, as required.
\end{proof}

\section{Abelian locally $\K$-Nash groups}\label{ALKNgroups}

A simply connected abelian locally $\K$-Nash group is analytically isomorphic to $\K^n$, for some $n$. Thus, this section is dedicated to the study of the  local $\K$-Nash group structures on $\K^n$.  Since we want to consider both the real and complex case, we first recall some notions related to invariant meromorphic maps. Then,  we will introduce some notation which allows us to state the results in \cite{BDOAAT} -- on maps admitting an algebraic addition theorem--, and finally, we will apply these results to  our case.

We  recall that a meromorphic map $f:\mathbb{C}^n\dashrightarrow \mathbb{C}^m$ is an \emph{invariant meromorphic map} if 
$\overline{f(\overline{u})}=f(u)$ for each $u\in \mathbb{C}^n$ where $f$ is defined.  We use the symbol $\dashrightarrow$ to stress that these functions are not defined in all points of $\C^n$, they are defined off an analytic subset. We say that a map $f:\mathbb{C}^n\dashrightarrow \mathbb{C}^m$ is \emph{$\mathbb{C}$-meromorphic} 
if it is just meromorphic and \emph{$\mathbb{R}$-meromorphic} if it is invariant meromorphic.

Let  $\mathcal{O}_{\mathbb{K},n}$ be the ring of power series in $n$ variables with coefficients in $\mathbb{K}$ that 
are convergent in a neighbourhood of the origin,  and $\CM_{\mathbb{K},n}$ its quotient field.  As usual, by the identity principle for analytic functions, we identify $\mathcal{O}_{\mathbb{K},n}$ with the ring of germs of analytic 
functions at $0$, and $\CM_{\mathbb{K},n}$ with its quotient field. 

We say that  $\phi \in \CM_{\mathbb{K},n}^n$ admits an \emph{algebraic addition theorem} (AAT)  if 
$\phi _1,\ldots ,\phi _n$ are algebraically independent over $\mathbb{K}$ and $\phi(u+v)$ is algebraic over 
$\mathbb{K}(\phi _1(u),\ldots ,\phi _n(u), \phi _1(v),\ldots ,\phi _n(v))$, where $u$ and $v$ denote $n$-tuples of variables.

We recall that by the well-known description of abelian Lie groups, the only analytic structure on the  additive group  $\mathbb{K}^n$ is the standard one (the one given by the identity map, so that its compatible charts are exactly the analytic diffeomorphisms). Next result relates locally $\K$-Nash structures on $(\K,+)$ with maps admitting an algebraic addition theorem:

\begin{lemma}\label{AAT-star}
Let $(U,\phi )$ be a chart of the identity of the group $\mathbb{K}^n$ compatible with its standard analytic structure.
Then, the following are equivalent:

\noindent 
$(1)$There exists an open neighbourhood of the identity $U'\sub U$ such that 
$$\phi (U')\times \phi (U')\rightarrow \phi (U) :
(x,y)\mapsto \phi (\phi ^{-1}(x)+\phi ^{-1}(y))$$
is a $\mathbb{K}$-Nash map (and therefore, there exists an open neighbourhood $V\sub U$ of $0$ such that 
$(\mathbb{K}^n,+,\phi|_V)$ is a locally $\mathbb{K}$-Nash group), and 

\noindent
$(2)$ $\phi$ admits an AAT.
\end{lemma}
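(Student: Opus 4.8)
The plan is to recognise both conditions as two readings of the same polynomial relations, linked by the invertible substitution $x=\phi(u)$, $y=\phi(v)$. Abbreviate the candidate multiplication in the $\phi$-chart by $m_\phi(x,y):=\phi(\phi^{-1}(x)+\phi^{-1}(y))$ and record the identity
\[
m_\phi(\phi(u),\phi(v))=\phi(u+v),
\]
valid for $(u,v)$ near the origin. Since $(U,\phi)$ is a chart, $\phi$ is an analytic diffeomorphism onto the open set $\phi(U)$, so $(u,v)\mapsto(\phi(u),\phi(v))$ is an analytic diffeomorphism near the origin; this is the only structural property of $\phi$ that the argument uses.

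First I would dispatch the algebraic-independence clause of the AAT, which is automatic for a chart. If $P(\phi_1,\ldots,\phi_n)\equiv 0$ for some nonzero $P\in\mathbb{K}[X_1,\ldots,X_n]$, then the nonempty open set $\phi(U)$ would lie inside the zero set of $P$, a proper algebraic subset of $\mathbb{K}^n$ with empty interior --- a contradiction. Hence $\phi_1,\ldots,\phi_n$ are algebraically independent over $\mathbb{K}$.

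For $(1)\Rightarrow(2)$, if $m_\phi$ is $\mathbb{K}$-Nash on $\phi(U')\times\phi(U')$ then each component $m_{\phi,i}$ is algebraic over $\mathbb{K}(\id)$, so there is a nonzero $P_i\in\mathbb{K}[x_1,\ldots,x_n,y_1,\ldots,y_n,T]$, of positive degree in $T$, with $P_i(x,y,m_{\phi,i}(x,y))\equiv 0$; substituting $x=\phi(u)$, $y=\phi(v)$ and using the displayed identity yields $P_i(\phi(u),\phi(v),\phi_i(u+v))\equiv 0$, which is exactly the statement that $\phi_i(u+v)$ is algebraic over $\mathbb{K}(\phi(u),\phi(v))$. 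Combined with the independence above, this is the AAT. For $(2)\Rightarrow(1)$ I would run the same computation backwards: from a witnessing relation $Q_i(\phi(u),\phi(v),\phi_i(u+v))\equiv 0$, substituting $u=\phi^{-1}(x)$, $v=\phi^{-1}(y)$ produces $Q_i(x,y,m_{\phi,i}(x,y))\equiv 0$, so each $m_{\phi,i}$ is algebraic over $\mathbb{K}(\id)$. As $m_\phi$ is a composition of the analytic maps $\phi$, $\phi^{-1}$ and addition it is analytic near the identity, and replacing $U'$ by the preimage of a small ball makes $\phi(U')$ semialgebraic; hence $m_\phi$ is $\mathbb{K}$-Nash and $(1)$ holds. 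The parenthetical claim in $(1)$ --- that some $V\sub U$ gives a locally $\mathbb{K}$-Nash group $(\mathbb{K}^n,+,\phi|_V)$ --- then follows from Fact\,\ref{compatibility0}, whose hypothesis $(i)$ is precisely the $\mathbb{K}$-Nash-ness of $m_\phi$ and whose hypothesis $(ii)$ is vacuous since the group is abelian.

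The computations are routine, and I expect no genuine obstacle beyond careful bookkeeping. The one point meriting attention is that the relations must transport correctly across the chart: an identity such as $P_i(x,y,m_{\phi,i}(x,y))=0$ holds among analytic functions on an open set, and precomposing with the diffeomorphism $(u,v)\mapsto(\phi(u),\phi(v))$ preserves it by the identity principle, matching the germ-level formulation of the AAT. One must also nest the neighbourhoods so that $\phi^{-1}(x)+\phi^{-1}(y)$ remains in $U$ and the domains stay semialgebraic; in the case $\mathbb{K}=\mathbb{C}$ one may instead appeal to Proposition\,\ref{complex Nash} to reduce being $\mathbb{K}$-Nash to being analytic and semialgebraic.
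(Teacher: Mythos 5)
Your proof is correct and follows essentially the same route as the paper's: both directions are the substitution $x=\phi(u)$, $y=\phi(v)$ (and its inverse) applied to the witnessing polynomial relations, with the domain shrunk to a semialgebraic neighbourhood in the $(2)\Rightarrow(1)$ direction. Your explicit argument for the algebraic independence of $\phi_1,\ldots,\phi_n$ (the paper only asserts it is clear) and your appeal to Fact\,\ref{compatibility0} with hypothesis $(ii)$ vacuous in the abelian case for the parenthetical claim are exactly the intended justifications.
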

\begin{proof}
$(1)$ implies $(2)$:
By hypothesis, $\phi (U')$ is semialgebraic, since it is the projection of the domain of a semialgebraic map. 
Fix $i\in \{1,\ldots ,n\}$.
As we have mentioned in the definition of $\mathbb{K}$-Nash map, this hypothesis implies that there exists 
$P_i\in \mathbb{K}[X_1,\ldots ,X_{2n+1}]$, $P_i\neq 0$, such that
\[
P_i(x_1,\ldots ,x_n,y_1,\ldots ,y_n,\phi _i(\phi ^{-1}(x) + \phi ^{-1}(y)))\equiv 0 \text{ on } \phi (U')\times \phi (U'),
\]
where $x:=(x_1,\ldots ,x_n)$ and $y:=(y_1,\ldots ,y_n)$.
Since $\phi$ is a diffeomorphism, letting $u:=\phi ^{-1} (x)$ and $v:=\phi ^{-1}(y)$, we deduce that 
\[
P_i(\phi _1(u),\ldots ,\phi _n(u),\phi _1(v),\ldots ,\phi _n(v),\phi _i(u+v))\equiv 0 \text{ on } U'\times U'.
\]
In addition, the coordinate functions $\phi _1,\ldots ,\phi _n$ are clearly algebraically independent.
So $\phi$ admits an AAT.

\smallskip

$(2)$ implies $(1)$:
Fix $i\in \{1,\ldots ,n\}$.
If $\phi$ admits an AAT then there exists $P_i\in \mathbb{K}[X_1,\ldots ,X_{2n+1}]$, $P_i\neq 0$, such that
\[
P_i(\phi _1(u),\ldots ,\phi _n(u),\phi _1(v),\ldots ,\phi _n(v),\phi _i(u+v))\equiv 0 \text{ on } U'\times U'
\]
for some open neighbourhood of the identity $U'\sub U$.
Since $\phi$ is a diffeomorphism, we can let $x:=\phi (u)$ and $y:=\phi (v)$ and  shrinking $U'$, if  necessary,  to make it 
semialgebraic, as required.
\end{proof}

The main purpose of this section is to improve the above lemma. Namely, we want to prove that any locally $\K$-Nash structure on $(\K^n,+)$ comes from a \emph{global} $\K$-meromorphic map $f:\C^n\dashrightarrow \C^n$ admitting an AAT. For that reason we introduce the following notation. Let $f:\mathbb{C}^n\dashrightarrow \mathbb{C}^n$ be  a $\mathbb{K}$-meromorphic map admitting an AAT and  satisfying 
 the following condition: 
\begin{itemize}
  \item[(*)] there exist $a\in \mathbb{K}^n$ and an open neighbourhood $U\sub \mathbb{K}^n$ of $0$ such that
\[
\varphi : U\rightarrow \mathbb{K}^n: u\mapsto \varphi(u):=f(u+a) 
\]
is an analytic diffeomorphism onto its image. 
\end{itemize} 
By \cite[Lem.\,3]{BDOAAT}  such $\varphi$  is algebraic over $f$, and by \cite[Lem.\,4]{BDOAAT}  $\varphi$ admits an AAT. We will denote by $(\mathbb{K}^n,+,f)$  the locally $\K$-Nash group $(\mathbb{K}^n,+,\varphi|_V)$, where  $V\sub U$  is an open neighbourhood of $0$ given by Lemma\,\ref{AAT-star}.

To make the notation sound, it remains to check that the locally $\mathbb{K}$-Nash group structure is independent of $a$ and the domains $U$ and $V$. That is, we 
have to show that given a $\mathbb{K}$-meromorphic map $f:\mathbb{C}^n\dashrightarrow \mathbb{C}^n$ admitting an AAT, and given 
$a_1,a_2\in \mathbb{K}^n$ such that
\[
\varphi_1: U_1\rightarrow \mathbb{K}^n: u\mapsto f(u+a_1), \quad \varphi_2: U_2\rightarrow \mathbb{K}^n: u\mapsto f(u+a_2),
\]
are analytic diffeomorphisms onto their corresponding images, we have that $(\mathbb{K}^n,+,\varphi_1|_{V_1})$ and $(\mathbb{K}^n,+,\varphi_2|_{V_2})$ 
are isomorphic as locally $\mathbb{K}$-Nash groups, where $V_1\sub U_1$ and $V_2\sub U_2$ are given by Lemma\,\ref{AAT-star}.
By \cite[Lem.\,3]{BDOAAT}  $\varphi_1$ is algebraic over $\mathbb{K}(\varphi_2)$.
Hence, by Proposition\,\ref{compatibilityAut} the identity map is a locally $\mathbb{K}$-Nash isomorphism between
$(\mathbb{K}^n,+,\varphi_1|_{V_1})$ and $(\mathbb{K}^n,+,\varphi_2|_{V_2})$.

\medskip

Hereafter, when we write \emph{$(\mathbb{K}^n,+,f)$ is a locally $\mathbb{K}$-Nash group}, 
we are also assuming that $f:\C^n\dashrightarrow \C^n$ is a $\K$-meromorphic map  admitting an AAT and satisfying condition (*) above.

\begin{rem}\label{LNG-LCNG}
If $(\mathbb{R}^n,+,f)$ is a locally Nash group then $(\mathbb{C}^n,+,f)$ is a locally $\mathbb{C}$-Nash group.
Indeed, by definition  the $\R$-meromorphic  $f:\C^n\dashrightarrow\C^n$ admits an AAT and satisfies condition $(*)$ for $\mathbb{K}=\mathbb{R}$. That is, there exist $a\in \mathbb{R}^n$ and an open neighbourhood $U\sub \mathbb{R}^n$ of $0$ such that the map
$U\rightarrow \mathbb{R}^n: u\mapsto f(u+a) 
$
is an analytic diffeomorphism.  Hence, the determinant of its Jacobian at 0 is not zero. Thus, the determinant of the  Jacobian at 0 of $f(u+a):\C^n\dashrightarrow \C^n$ is not zero, and we get that 
 $f$ is a local diffeomorphism at $0$.  Conversely,  if $(\mathbb{C}^n,+,f)$ is a locally $\mathbb{C}$-Nash group  and $f$ is $\R$-meromorphic then $(\mathbb{R}^n,+,f)$ is a locally Nash group. For, working again  with the Jacobian it is easy to find $a\in\R^n$ such that condition (*) is satisfied. 
\end{rem}

Now we are ready to prove the main results of this section. We will make use of the following extension result from \cite{BDOAAT}:

\begin{fact}\label{T2}{\cite[Thm.\,1]{BDOAAT}}
Let $\phi :=(\phi _1,\ldots ,\phi _n)\in \CM_{\mathbb{K},n}^n$ admit an AAT.
Then, there exist $\psi :=(\psi _1,\ldots ,\psi _n)\in \CM_{\mathbb{K},n}^n$ admitting an AAT and  algebraic over $\mathbb{K}(\phi)$, 
and an additional meromorphic series $\psi _0\in \CM_{\mathbb{K},n}$ algebraic over $\mathbb{K}(\psi )$ such that:
\begin{enumerate}
\item[$(1)$] For each $f(u)\in \mathbb{K}\big(\psi _0(u),\ldots ,\psi _n(u)\big)$,
\smallskip
\begin{enumerate}
\item[$(a)$] $f(u+v)\in \mathbb{K}\big(\psi _0(u),\ldots ,\psi _n(u),\psi _0(v),\ldots ,\psi _n(v)\big)$ and
\smallskip
\item[$(b)$] $f(-u)\in \mathbb{K}\big(\psi _0(u),\ldots ,\psi _n(u)\big)$.
\end{enumerate}
\smallskip
\item[$(2)$] Each $\psi _0,\ldots ,\psi _n$ is the quotient of two convergent power series whose complex domain of convergence is 
$\mathbb{C}^n$.
\end{enumerate}
\end{fact}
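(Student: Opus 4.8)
The plan is to realize the algebraic addition theorem as the function field of a commutative algebraic group, and then to read off all three conclusions from the geometry of that group.

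First I would produce the group. Since the $\phi_i$ are algebraically independent, the field $\mathbb{K}(\phi_1,\dots,\phi_n)$ has transcendence degree $n$ over $\mathbb{K}$ and is the function field of an irreducible $n$-dimensional variety $X$ defined over $\mathbb{K}$ (over $\mathbb{R}$ this uses the invariance $\overline{f(\bar u)}=f(u)$, which keeps the coefficients real). The AAT asserts that $\phi(u+v)$ is algebraic over $\mathbb{K}(\phi(u),\phi(v))$, which defines a priori an algebraic correspondence on $X\times X$. After passing to a finite extension of $\mathbb{K}(\phi)$ — equivalently, to a finite cover of $X$ — I would make this correspondence single-valued, obtaining a rational map $X\times X\dashrightarrow X$ which, being induced by $(u,v)\mapsto u+v$ on $\mathbb{K}^n$, is an associative and commutative birational group law. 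Weil's group chunk theorem then yields a commutative algebraic group $G$ of dimension $n$, defined over $\mathbb{K}$, birationally equivalent to $X$, whose multiplication restricts to the above law on a dense open set; in particular $\mathbb{K}(G)$ is a finite extension of $\mathbb{K}(\phi)$.

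Next I would fix the coordinates. Take $\psi_1,\dots,\psi_n\in\mathbb{K}(G)$ a transcendence basis (a birational chart $G\dashrightarrow\mathbb A^n$), and by the primitive element theorem adjoin a single $\psi_0\in\mathbb{K}(G)$, algebraic over $\mathbb{K}(\psi_1,\dots,\psi_n)=\mathbb{K}(\psi)$, so that $\mathbb{K}(\psi_0,\dots,\psi_n)=\mathbb{K}(G)$. These lie in $\mathbb{K}(G)$, hence are algebraic over $\mathbb{K}(\phi)$. Because $G\times G$ is geometrically irreducible, $\mathbb{K}(G\times G)$ is the compositum of the two pulled-back copies of $\mathbb{K}(G)$, i.e. $\mathbb{K}(G\times G)=\mathbb{K}\big(\psi_0(u),\dots,\psi_n(u),\psi_0(v),\dots,\psi_n(v)\big)$. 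For any $f\in\mathbb{K}(G)=\mathbb{K}(\psi_0,\dots,\psi_n)$ the functions $f\circ m$ and $f\circ i$ are rational on $G\times G$ and on $G$ respectively, since multiplication $m$ and inversion $i$ are morphisms of $G$; read through the coordinates these are precisely (1)(a) and (1)(b). Applying this to $f=\psi_i$ puts $\psi_i(u+v)$ in $\mathbb{K}(G\times G)$, and since $\psi_0(u),\psi_0(v)$ are algebraic over $\mathbb{K}(\psi(u))$ and $\mathbb{K}(\psi(v))$, the tuple $\psi(u+v)$ is algebraic over $\mathbb{K}(\psi(u),\psi(v))$; with the algebraic independence of the $\psi_i$ this shows $\psi$ admits an AAT. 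For the global statement (2), I would identify $\phi$ with pullbacks under the exponential map of $G$: the assignment $u\mapsto$ "the point of $G$ with coordinates $\phi(u)$" is a local analytic homomorphism $(\mathbb{C}^n,+)\to G(\mathbb{C})^o$, and since $\mathbb{C}^n$ is simply connected it is the germ at $0$ of the globally defined universal covering homomorphism $\pi\colon\mathbb{C}^n\to G(\mathbb{C})^o$. Hence each $\psi_i$, being the germ of a rational function $R_i$ on $G$, coincides with the globally meromorphic $R_i\circ\pi$ on $\mathbb{C}^n$; as $\mathbb{C}^n$ is Stein with solvable second Cousin problem, every meromorphic function on it is a quotient of two entire functions, giving (2).

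The main obstacle I expect is the passage from the \emph{algebraic} — hence a priori multivalued — addition correspondence furnished by the AAT to an honest single-valued birational group law, which is exactly what forces the finite extension that produces $\psi_0$ and what must be fed into Weil's theorem; a close second is the local-to-global analytic step of recognizing the germ $\phi$ as coming from the entire covering map $\pi$, after which the solvability of the Cousin II problem on $\mathbb{C}^n$ delivers the required quotient-of-entire-functions form.
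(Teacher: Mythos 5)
This statement is imported verbatim from \cite{BDOAAT} and the present paper offers no proof of it, so there is no internal argument to compare yours against; I can only judge the proposal on its own terms. Your overall architecture --- realize the AAT as a birational group law, invoke Weil's group-chunk theorem, read (1) off the morphisms $m$ and $i$, and get (2) by factoring the germ through the universal covering $\pi:\C^n\to G(\C)^o$ and solving the Poincar\'e/Cousin problem on $\C^n$ --- is the natural classical route, and it closely parallels how this paper \emph{uses} the Fact in Theorem\,\ref{algebraic category covering}.

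However, there is a genuine gap, and it sits exactly at the step you flag as ``the main obstacle'': the passage from the multivalued algebraic addition correspondence to a single-valued rational law. Weil's theorem takes as \emph{input} a rational map $V\times V\dashrightarrow V$ satisfying the generic group axioms; the AAT only supplies an irreducible correspondence $Z\subseteq X\times X\times X$ whose projection to $X\times X$ has some degree $d>1$ in general (already for $n=1$ and $\phi=\wp$ one has $d=2$ and must adjoin $\wp'$). Producing a finite extension of $\K(\phi)$ on which addition and inversion act by honest rational formulas is precisely conclusion $(1)$ of the theorem you are trying to prove, so writing ``I would make this correspondence single-valued'' assumes the conclusion rather than deriving it. A second, related defect: even granting an abstract finite extension $\K(G)\supseteq\K(\phi)$, you must embed it into $\CM_{\K,n}$ compatibly with the given embedding of $\K(\phi)$, i.e.\ realize $\psi_0,\dots,\psi_n$ as single-valued meromorphic \emph{germs} at a point of $\K^n$; the algebraic closure of $\K(\phi)$ inside $\CM_{\K,n}$ is a proper subfield of the abstract algebraic closure, so this is not automatic, and the classical proofs avoid the issue by constructing the extension inside the germ field from the start (from translates and symmetric functions of branches of $\phi(u+v)$). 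Your treatment of (1)(a)--(b) and of (2), by contrast, would go through once a regular system $\psi_0,\dots,\psi_n$ is actually in hand, modulo routine care with the base point of the local homomorphism and with $\R$-invariance when $\K=\R$.
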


\begin{theorem}\label{T1}
Every simply connected $n$-dimensional abelian locally $\mathbb{K}$-Nash group is isomorphic to some
$(\mathbb{K}^n,+, f )$.
\end{theorem}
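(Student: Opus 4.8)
The plan is to reduce to a single chart of the identity and then promote it to a global meromorphic map by means of the extension result Fact \ref{T2}. Let $G$ be a simply connected $n$-dimensional abelian locally $\mathbb{K}$-Nash group. By the remark opening this section it is analytically isomorphic to $(\mathbb{K}^n,+)$ with its standard structure, so I would transport the locally $\mathbb{K}$-Nash atlas along this isomorphism and then apply Proposition \ref{compatibility1} to replace it by one of the form $\mathcal{A}_{(U,\phi)}$ for a chart of the identity $(U,\phi)$; thus $G\cong(\mathbb{K}^n,+,\phi|_V)$ for a suitable open neighbourhood $V\sub U$ of $0$. Viewing $\phi$ as a germ in $\CM_{\mathbb{K},n}^n$, since $(\mathbb{K}^n,+,\phi|_V)$ is a locally $\mathbb{K}$-Nash group condition $(1)$ of Lemma \ref{AAT-star} holds, and therefore $\phi$ admits an AAT.

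Now I would apply Fact \ref{T2} to $\phi$, obtaining $\psi=(\psi_1,\ldots,\psi_n)\in\CM_{\mathbb{K},n}^n$ that admits an AAT, is algebraic over $\mathbb{K}(\phi)$, and whose coordinates are --- by item $(2)$ --- quotients of power series convergent on all of $\mathbb{C}^n$. Hence $\psi$ defines a genuine global $\mathbb{K}$-meromorphic map $\psi:\mathbb{C}^n\dashrightarrow\mathbb{C}^n$. I claim $f:=\psi$ is the map we want: it is $\mathbb{K}$-meromorphic and admits an AAT, so once condition $(*)$ is verified the group $(\mathbb{K}^n,+,f)$ is defined, with chart of the identity $\varphi(u)=\psi(u+a)$ on a neighbourhood $V'$ of $0$.

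To verify $(*)$ I would argue that, since the coordinates of $\psi$ are algebraically independent over $\mathbb{K}$ (part of admitting an AAT), the Jacobian determinant $\det(\partial\psi_i/\partial u_j)$ is a meromorphic function on $\mathbb{C}^n$ that is not identically zero. For $\mathbb{K}=\mathbb{C}$ I would then pick $a\in\mathbb{C}^n$ off the proper analytic set formed by the poles of $\psi$ together with the zero set of this Jacobian, so that $u\mapsto\psi(u+a)$ is a local biholomorphism at $0$. For $\mathbb{K}=\mathbb{R}$ the map $\psi$ is invariant, whence its polar set and its Jacobian are invariant and $\mathbb{R}^n$ is a uniqueness set for them; the same genericity argument then produces $a\in\mathbb{R}^n$. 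This gives $(*)$. Finally, to identify the two structures I would compare them via the identity map: by \cite[Lem.\,3]{BDOAAT} (equivalently, by specialising the AAT of $\psi$ at $v=a$, where $\psi(a)$ is a constant) the chart $\varphi=\psi(\cdot+a)$ is algebraic over $\mathbb{K}(\psi)$, and $\psi$ is algebraic over $\mathbb{K}(\phi)$ by Fact \ref{T2}; composing, $\varphi$ is algebraic over $\mathbb{K}(\phi)$ near the identity. Proposition \ref{compatibilityAut} applied to $\alpha=\mathrm{id}$ with source chart $\phi$ and target chart $\varphi$ then shows the identity is a locally $\mathbb{K}$-Nash isomorphism from $(\mathbb{K}^n,+,\phi|_V)$ onto $(\mathbb{K}^n,+,\varphi|_{V'})=(\mathbb{K}^n,+,f)$, so $G\cong(\mathbb{K}^n,+,f)$.

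The substance of the argument is concentrated in Fact \ref{T2}: replacing the purely local chart $\phi$ by an algebraically equivalent $\psi$ that extends meromorphically to all of $\mathbb{C}^n$ is exactly the global extension theorem of \cite{BDOAAT}, and it is this step that has no elementary reason to hold. Granting that input, the only genuine verification left is condition $(*)$ --- the Jacobian genericity argument, which is marginally more delicate in the real case --- with everything else being an application of the algebraicity criterion in Proposition \ref{compatibilityAut}.
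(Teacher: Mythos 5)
Your proposal is correct and follows essentially the same route as the paper's proof: reduce to a chart of the identity of $(\mathbb{K}^n,+)$ with its standard analytic structure, extract an AAT via Lemma\,\ref{AAT-star}, globalize with Fact\,\ref{T2}, verify condition $(*)$ by a Jacobian non-vanishing argument, and conclude with Proposition\,\ref{compatibilityAut}. The only (harmless) variations are cosmetic: you justify the non-vanishing of the Jacobian directly from the algebraic independence of the coordinates of $\psi$, whereas the paper transfers linear independence of the differentials from $\phi$ to $\psi$, and you spell out the real-case genericity of $a$ slightly more explicitly.
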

\begin{proof}
Let $G$ be a simply connected $n$-dimensional abelian locally $\mathbb{K}$-Nash group equipped with a Nash atlas 
$\mathcal{B}:=\{(W_i,\ff _i)\}_{i\in I}$. 
In particular, $G$ is an analytic group with this atlas and, therefore, there exists an isomorphism of analytic groups
$
\alpha :G \rightarrow \mathbb{K}^n,
$
where $\mathbb{K}^n$ is equipped with its unique analytic group structure, the standard one.
Since $\mathcal{B}$ is a $\mathbb{K}$-Nash atlas for $G$, we have that
\[
\mathcal{A}:=\{ \alpha(W_i), \ff _i  \alpha^{-1}\}_{i\in I}
\]
is a $\mathbb{K}$-Nash atlas for $\mathbb{K}^n$ compatible with its standard analytic structure.
Moreover, $G$ equipped with $\mathcal{B}$ is clearly locally $\mathbb{K}$-Nash isomorphic to 
$\mathbb{K}^n$ equipped with $\mathcal{A}$ (see Proposition\,\ref{compatibilityAut}).

Now, consider a chart of the identity $(U,\ff)\in \mathcal{A}$.
Firstly, note that, as analytic chart, $(U,\ff)$ must be compatible with the standard analytic structure of 
$\mathbb{K}^n$, so $\ff$ is an analytic diffeomorphism.
Also, being a chart of a locally $\mathbb{K}$-Nash group structure, it satisfies condition $(1)$ of Lemma\,\ref{AAT-star}, so
$\ff$ admits an AAT.

Next, we apply Fact\,\ref{T2} to obtain a $\K$-meromorphic map
$$\psi:=(\psi_1,\ldots,\psi_n):\C^n \dashrightarrow \C^n$$
admitting an AAT and such that $\psi$ is algebraic over $\K(\varphi)$ as power series. By algebraic independence, it follows that $\varphi$ is also algebraic over $\K(\psi)$.

We will prove that $(\K^n, +, \psi)$ is a locally $\K$-Nash group isomorphic to $(\K^n, +, \ff|_{U})$ and hence to $\K^n$ with its atlas $ \mathcal{A}$, so we can take $\psi$  for the $f$ in the statement. The only thing which remains to be proved is that $\psi$ satisfies condition (*). Indeed,  there exists an open dense subset 
$W\sub \mathbb{K}^n$ such that 
$
\psi|_W:W\rightarrow \mathbb{K}^n
$
is analytic.
Let $\Delta$ denote the determinant of the Jacobian of $\psi|_W$. Since $\ff$ is an analytic diffeomorphism, we deduce that
$
{d\ff} $ at 0
is a vector space  isomorphism. 
Hence, $ {d\ff} _1,\ldots , {d\ff} _n$ are linearly independent over $\CM_{\mathbb{K},n}$, and so they are  $d\psi _1,\ldots ,d\psi _n$.
Thus, 
$\Delta$ is not identically zero on $W$, so there exists $a\in  W$ such that $\Delta (a)\neq 0$. 
Thus, $\psi$ is a local diffeomorphism at $a$, so there exists an open neighbourhood $V\sub \mathbb{K}^n$ of $0$ such that
$V\rightarrow \mathbb{K}^n: u\mapsto \psi(u+a)$ is a diffeomorphism onto its image, as required. 
\end{proof}

The following result characterises  isomorphism between groups of the above form.

\begin{prop}\label{compatibilityAut.1}
 The locally $\mathbb{K}$-Nash groups $(\mathbb{K}^n,+, f)$ and $(\mathbb{K}^n,+, g)$ are isomorphic if and only if 
there exists $\alpha \in \GL_n(\mathbb{K})$ such that $g  \alpha$ is algebraic over $\mathbb{K}(f)$.
\end{prop}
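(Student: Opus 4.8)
The plan is to identify the abstract isomorphism with an element of $\GL_n(\K)$ and then translate the local-Nash condition of Proposition~\ref{compatibilityAut} into the asserted algebraic relation between the global meromorphic maps. I first record the key reduction: any locally $\K$-Nash isomorphism $(\K^n,+,f)\to(\K^n,+,g)$ is in particular an analytic automorphism of the additive group $(\K^n,+)$, and every such automorphism is linear. Indeed, an analytic $\beta$ with $\beta(u+v)=\beta(u)+\beta(v)$ has translation-invariant differential, so $d\beta\equiv d\beta_0=:\alpha$ is constant and $\beta(u)=\alpha u$ (as $\beta(0)=0$); since $\beta$ is $\K$-analytic, $\alpha$ is $\K$-linear, and since $\beta$ is a diffeomorphism, $\alpha\in\GL_n(\K)$. (In the case $\K=\C$ this is exactly what forces $\alpha$ to be $\C$-linear rather than merely $\R$-linear.) Conversely, any $\alpha\in\GL_n(\K)$ is a continuous group automorphism of $(\K^n,+)$, so in both directions the candidate isomorphism is a linear map to which Proposition~\ref{compatibilityAut} applies.

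Next I set up the dictionary between the global maps and the charts of the identity. Writing $\varphi(u)=f(u+a_f)$ and $\psi(u)=g(u+a_g)$ for the distinguished identity charts of $(\K^n,+,f)$ and $(\K^n,+,g)$, I use \cite[Lem.\,3]{BDOAAT} (as in the well-definedness discussion following condition~(*)) to see that $\varphi$ and $f$ are mutually algebraic, i.e. algebraicity over $\K(\varphi)$ and over $\K(f)$ coincide, and likewise for $\psi$ and $g$. I also observe that $h:=g\circ\alpha$ again admits an AAT: its components $g_i\circ\alpha$ are algebraically independent because $\alpha$ is an invertible linear substitution, and $h(u+v)=g(\alpha u+\alpha v)$ is algebraic over $\K(h(u),h(v))$. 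Finally, a direct computation gives $\psi\circ\alpha(u)=g(\alpha u+a_g)=h(u+b)$ with $b:=\alpha^{-1}a_g$, so $\psi\circ\alpha$ is simply a translate of $h$; since $h$ admits an AAT, \cite[Lem.\,3]{BDOAAT} shows $\psi\circ\alpha$ and $h$ are mutually algebraic.

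With these preparations both implications follow by chasing ``algebraic over'' through the dictionary and the transitivity of algebraicity. For the forward direction, Proposition~\ref{compatibilityAut}.$(1)\Rightarrow(2)$ yields a neighbourhood of the identity on which $\psi\circ\alpha$ is algebraic over $\K(\varphi)$, hence over $\K(f)$; undoing the translate then gives $g\circ\alpha=h$ algebraic over $\K(f)$, as required. For the converse, assuming $g\circ\alpha$ algebraic over $\K(f)$, the translate $\psi\circ\alpha=h(\cdot+b)$ is algebraic over $\K(h)$ and therefore over $\K(f)$, and hence over $\K(\varphi)$; since any two identity charts of a single $\K$-Nash atlas are $\K$-Nash compatible, verifying this for the distinguished pair $(\varphi,\psi)$ suffices to establish condition~$(2)$ of Proposition~\ref{compatibilityAut}, so $\alpha$ is a locally $\K$-Nash isomorphism. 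The main obstacle I anticipate is bookkeeping the shifts cleanly: the global maps $f,g$ are never directly the charts, only their translates are, so the whole argument must be funnelled through the translate-invariance of the AAT and of algebraicity provided by \cite[Lem.\,3]{BDOAAT} and \cite[Lem.\,4]{BDOAAT}, and one must take care that composing with $\alpha$ preserves the AAT so that these lemmas remain applicable to $g\circ\alpha$.
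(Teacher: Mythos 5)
Your proposal is correct and follows essentially the same route as the paper: identify the abstract isomorphism with an element of $\GL_n(\mathbb{K})$, invoke Proposition\,\ref{compatibilityAut} to reduce to an algebraicity condition between the identity charts, and use \cite[Lem.\,3]{BDOAAT} to transfer that condition between the charts $f(\cdot+a)$, $g(\cdot+b)$ and the global maps $f$, $g\circ\alpha$. You merely spell out details the paper leaves implicit (that $g\circ\alpha$ still admits an AAT and that one pair of identity charts suffices in condition $(2)$), which is harmless.
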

\proof
By definition, there exist $a, b\in \mathbb{K}^n$ and an open neighbourhoods  $U, V\sub\K^n$ of 0 such that the maps
$$
f_a:U \rightarrow \mathbb{K}^n : u\mapsto f(u+a)
\text{ and }
g_b : V\rightarrow \mathbb{K}^n : u\mapsto g(u+b)
$$are  charts  of the corresponding groups.  Note that any analytic automorphism of $(\K,+)$ is of the form $\alpha\in \GL_n(\K)$. Note that any analytic automorphism of $(\K,+)$ is of the form $\alpha\in\GL_n(\K)$. Thus,  by Proposition\,\ref{compatibilityAut} it suffices to prove that 
for any $\alpha \in \GL_n(\mathbb{K})$,  $g  \alpha$ is algebraic over $\mathbb{K}(f)$ if and only if 
$g_b  \alpha$ is algebraic over $\mathbb{K}(f_a)$. The latter is true by \cite[Lem.\,3]{BDOAAT}.
\endproof

Next, we introduce an  invariant   of isomorphism types of abelian  locally $\K$-Nash groups, which will be specially useful for the classification of the latter groups in dimension one and two (see \cite{BDO2ALKNG}).

 Let $\Lambda$ be a discrete subgroup of $\mathbb{C}^n$.
Then, there exist $r\leq 2n$ and $\lambda _1,\ldots ,\lambda _r\in \Lambda$, linearly independent over $\mathbb{R}$, such that 
\[
\Lambda = \mathbb{Z}\lambda _1 \oplus \ldots \oplus \mathbb{Z}\lambda _r.
\]
We call $r$ the rank of $\Lambda$ (the dimension of $\Lambda$ as a free $\mathbb{Z}$-module), and denote it  by $\rank \Lambda$.
A discrete subgroup $\Lambda$ of $\mathbb{C}^n$ is a \emph{lattice} if $\rank  \Lambda =2n$.
We say that a subgroup $G\leq\mathbb{C}^n$ is an \emph{invariant subgroup} if $G =\overline{G}$, {\it i.e.}, if $g\in G$ implies that $\overline{g}\in G$.

The previous concepts are related to meromorphic maps as follows.
Given a meromorphic map $f:\mathbb{C}^n\dashrightarrow \mathbb{C}^m$, we define the \emph{group of periods of $f$} as
\[
\Lambda _f:=\{  a \in \mathbb{C}^n \suchthat f(u)=f(u+a)\},
\]
where $f(u)=f(u+a)$ means that if $f=g/h$ then $g(b)h(b+a)=h(b)g(b+a)$, for all $b\in \mathbb{C}^n$.
Note that $\Lambda _f$ is a subgroup of $\mathbb{C}^n$, which is invariant, provided that $f$ is also invariant. We have the following:

\begin{lemma}\label{discrete groups}
Let $f:\mathbb{C}^n\dashrightarrow \mathbb{C}^n$ be a meromorphic map.
We have:
\\ $(1)$ If $f$ is a local diffeomorphism at some point, then $\Lambda _f$ is a discrete subgroup of  $\mathbb{C}^n$.
\\ $(2)$ If $f$ is a $\K$-meromorphic map, $a\in \mathbb{\K}^n$ and, for some open neighbourhood of $0$,
$U\sub \mathbb{K}^n$, the restriction of $f(u+a)$ to $U$ is an analytic diffeomorphism, then $\Lambda _f$ 
is a discrete subgroup of $\mathbb{C}^n$. 
\\ $(3)$ If $\Lambda _f$ is a discrete subgroup of $\mathbb{C}^n$ and $\alpha \in \GL_n(\mathbb{C})$, then 
$\Lambda _{f  \alpha}$ is a discrete subgroup of $\mathbb{C}^n$ with $\rank  \Lambda _{f  \alpha}=\rank  \Lambda _f$.
\end{lemma}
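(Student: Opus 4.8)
The plan is to establish $(1)$ directly from the inverse function theorem, to reduce $(2)$ to $(1)$, and to prove $(3)$ by computing the period group of a composite explicitly.

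For $(1)$, I would show that $0$ is isolated in $\Lambda_f$; since $\Lambda_f$ is a subgroup of the topological group $\C^n$, isolation of $0$ already forces discreteness. Let $p$ be a point at which $f$ is a local diffeomorphism. By the inverse function theorem there is an open neighbourhood $W$ of $p$ on which $f$ is analytic, injective, and has non-vanishing denominator. I would then pick $\varepsilon>0$ so that $p+a\in W$ whenever $\|a\|<\varepsilon$. If $a\in\Lambda_f$ and $\|a\|<\varepsilon$, the meromorphic identity $f(u)=f(u+a)$ evaluated at $u=p$ (where both sides are genuinely defined) yields $f(p)=f(p+a)$, so injectivity on $W$ gives $p=p+a$, that is, $a=0$. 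Thus $\Lambda_f\cap B(0,\varepsilon)=\{0\}$.

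For $(2)$, I would check that the hypothesis makes $f$ a complex local diffeomorphism at $a$, after which $(1)$ applies. When $\K=\C$ this is immediate, since $u\mapsto f(u+a)$ is a complex analytic diffeomorphism on $U$ and translation by $a$ is a diffeomorphism, so $f$ is a local diffeomorphism at $a$. The delicate case, and the main obstacle, is $\K=\R$: here one only knows that $u\mapsto f(u+a)$ is a \emph{real} analytic diffeomorphism on $U\sub\R^n$, and must upgrade this to complex invertibility of the derivative $Df(a)$. The key observation is that, $f$ being invariant, its Taylor coefficients at the real point $a$ are real, so $Df(a)$ is a matrix with real entries; consequently the complex Jacobian determinant of $f$ at $a$ equals the real Jacobian determinant of $f|_{\R^n}$ at $a$, which is nonzero by hypothesis. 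Hence $Df(a)$ is invertible over $\C$ and $f$ is a complex local diffeomorphism at $a$, so $(1)$ applies.

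For $(3)$, I would first note that, $\alpha$ being invertible, $b\in\Lambda_{f\alpha}$ means $f(\alpha u)=f(\alpha u+\alpha b)$ for all $u$, which, as $\alpha u$ ranges over all of $\C^n$, is equivalent to $\alpha b\in\Lambda_f$; thus $\Lambda_{f\alpha}=\alpha^{-1}\Lambda_f$. Since $\alpha^{-1}$ is a $\C$-linear, hence $\R$-linear, automorphism of $\C^n\iso\R^{2n}$, it is a homeomorphism carrying the discrete group $\Lambda_f$ to the discrete group $\alpha^{-1}\Lambda_f$, and it sends an $\R$-linearly independent $\Z$-basis $\lambda_1,\dots,\lambda_r$ of $\Lambda_f$ to an $\R$-linearly independent $\Z$-basis $\alpha^{-1}\lambda_1,\dots,\alpha^{-1}\lambda_r$ of $\Lambda_{f\alpha}$. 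Therefore $\rank\Lambda_{f\alpha}=\rank\Lambda_f$, completing the plan.
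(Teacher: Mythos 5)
Your proposal is correct and follows essentially the same route as the paper: part $(1)$ via injectivity of $f$ near the point of local diffeomorphism combined with the subgroup property (the paper phrases this as a contradiction with a convergent sequence in $\Lambda_f$, you phrase it as isolation of $0$, which is equivalent), part $(2)$ by the same real-versus-complex Jacobian observation the paper invokes from Remark\,\ref{LNG-LCNG}, and part $(3)$ by transporting a basis through $\alpha^{-1}$ exactly as in the paper.
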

\begin{proof}
$(1)$ We can assume that $f$ is a local diffeomorphism at 0.
If $\Lambda _f$ is not a discrete subgroup of $\C^n$
then, there exists an infinite sequence $\{ a_k \suchthat k\in \mathbb{N}\}$ of points of $\Lambda _f$ that converges to some 
$a\in \mathbb{C}^n$. Since  $f$ is a local diffeomorphism at $0$, we can take $\epsilon>0$ such that $f$ is injective and analytic on an open ball of radius $\epsilon$ centered at $0$. 
Let $N\in \mathbb{N}$ be such that $\| a_k-a_N\| <\epsilon$ for all $k\geq N$.
Now,  $\Lambda _f$ is a subgroup of $\mathbb{C}^n$,  thus  $a_k-a_N\in \Lambda _f$ for all $k\in \mathbb{N}$.
This implies that $f(a_k-a_N)=f(0)$ for all $k\in \mathbb{N}$, which contradicts that $f$ is injective in the mentioned ball.

$(2)$  By (1) we have the case $\K=\C$, the case $\K=\R$ reduces to the former one  by the argument in Remark\,\ref{LNG-LCNG}.

$(3)$ Take $r\leq 2n$ and $\lambda _1,\ldots ,\lambda _r\in \Lambda$ linearly independent over $\mathbb{R}$ such that 
$\Lambda _f= \mathbb{Z}\lambda _1 \oplus \ldots \oplus \mathbb{Z}\lambda _r$.
Then, $\alpha ^{-1}(\lambda _1),\ldots ,\alpha ^{-1}(\lambda _r)\in \Lambda$ are linearly independent over $\mathbb{R}$ and
$\Lambda _{f  \alpha}= \mathbb{Z}\alpha ^{-1}(\lambda _1) \oplus \ldots \oplus \mathbb{Z}\alpha ^{-1}(\lambda _r)$.
\end{proof}

\begin{lemma}\label{algebraicity of periods} Let $f,g:\mathbb{C}^n\dashrightarrow \mathbb{C}^n$ be meromorphic maps such that  $\Lambda _g$ is a discrete subgroup of $\mathbb{C}^n$ and  $g$ is algebraic over $\mathbb{C}(f)$.  Then, $\Lambda _f$ is also discrete and there exists $N\in \mathbb{N}\setminus \{0\}$ such that $N\Lambda _f\leq\Lambda _g$,  
and so  $\rank \Lambda _f \leq \rank \Lambda _g$.  Moreover, if the coordinate functions of $g$ are algebraically independent over $\mathbb{C}$ then $\rank \Lambda _f = \rank \Lambda _g$.
\end{lemma}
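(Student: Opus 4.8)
The plan is to read the period condition through translation automorphisms. For $a\in\C^n$ let $\tau_a$ denote the translation $h(u)\mapsto h(u+a)$ acting on $\mathcal M$, the field of meromorphic functions on $\C^n$; this is a $\C$-field automorphism of $\mathcal M$, and by definition $a\in\Lambda_f$ exactly when $\tau_a$ fixes every $f_j$, hence fixes the subfield $\C(f):=\C(f_1,\dots,f_n)$ pointwise. Fix $i$ and let $m_i(Y)\in\C(f)[Y]$ be the minimal polynomial of $g_i$ over $\C(f)$, of degree $d_i=[\C(f)(g_i):\C(f)]$. For $a\in\Lambda_f$, applying $\tau_a$ to the relation $m_i(g_i)=0$ and using that $\tau_a$ fixes the coefficients of $m_i$ shows that $\tau_a g_i=g_i(u+a)$ is again a root of $m_i$. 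Since $m_i$ has at most $d_i$ roots in the field $\mathcal M$, the assignment $a\mapsto\tau_a g_i$ takes at most $d_i$ values on $\Lambda_f$.

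First I would turn this into an index bound. Writing $H_i:=\{a\in\Lambda_f:\tau_a g_i=g_i\}=\Lambda_f\cap\Lambda_{g_i}$, a short computation with $\tau_{a+b}=\tau_a\tau_b$ shows $\tau_a g_i=\tau_b g_i$ if and only if $a-b\in H_i$, so $a\mapsto\tau_a g_i$ descends to an injection $\Lambda_f/H_i\hookrightarrow\{\text{roots of }m_i\}$; hence $[\Lambda_f:H_i]\le d_i$. As $\Lambda_g=\bigcap_{i=1}^n\Lambda_{g_i}$ we get $\Lambda_f\cap\Lambda_g=\bigcap_{i=1}^n H_i$, which has finite index $N:=[\Lambda_f:\Lambda_f\cap\Lambda_g]\le\prod_i d_i$ in $\Lambda_f$. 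Since the quotient $\Lambda_f/(\Lambda_f\cap\Lambda_g)$ is a finite abelian group of order $N$, we have $N\cdot\big(\Lambda_f/(\Lambda_f\cap\Lambda_g)\big)=0$, that is $N\Lambda_f\sub\Lambda_f\cap\Lambda_g\sub\Lambda_g$, which is the asserted $N$. This simultaneously proves discreteness of $\Lambda_f$: from $N\Lambda_f\sub\Lambda_g$ we get $\Lambda_f\sub\tfrac1N\Lambda_g$, and $\tfrac1N\Lambda_g$ is discrete because $\Lambda_g$ is and the homothety $x\mapsto x/N$ preserves discreteness, so $\Lambda_f$, being a subgroup of a discrete subgroup of $\C^n$, is discrete as well. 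Finally $N\Lambda_f\le\Lambda_g$ together with $\rank(N\Lambda_f)=\rank\Lambda_f$ yields $\rank\Lambda_f\le\rank\Lambda_g$.

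For the last assertion I would use transcendence degree and symmetry. If $g_1,\dots,g_n$ are algebraically independent over $\C$ then $\tr_\C\C(g)=n$; since $\C(g)$ is algebraic over $\C(f)$ while $\C(f)$ is generated by $n$ elements, $\tr_\C\C(f)=n$ as well, so $f_1,\dots,f_n$ are algebraically independent and $\C(f,g)$ is algebraic over $\C(g)$. In particular $f$ is algebraic over $\C(g)$. As $\Lambda_f$ is now known to be discrete, the first part applies verbatim with the roles of $f$ and $g$ interchanged, giving $\rank\Lambda_g\le\rank\Lambda_f$; combined with the previous inequality this forces $\rank\Lambda_f=\rank\Lambda_g$.

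The main obstacle is the first paragraph: recognizing that a period of $f$ acts as a $\C(f)$-automorphism of $\mathcal M$ and therefore can only permute the finitely many roots of the minimal polynomial of each $g_i$. Once this is in place the rest is bookkeeping with finite-index subgroups of $\C^n$ and a transcendence-degree count; the one point requiring care is that the period groups are a priori only abstract subgroups of $\C^n$, so discreteness of $\Lambda_f$ must be \emph{deduced} (via $\Lambda_f\sub\tfrac1N\Lambda_g$) rather than assumed.
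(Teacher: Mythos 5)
Your proof is correct and takes essentially the same route as the paper's: both rest on the key observation that a period of $f$ fixes $\C(f)$ and therefore sends each $g_i$ to one of the finitely many roots of its minimal polynomial over $\C(f)$. The only (harmless) difference is bookkeeping — you extract $N$ directly as the index $[\Lambda_f:\Lambda_f\cap\Lambda_g]$ via Lagrange and deduce discreteness from $\Lambda_f\subseteq\frac{1}{N}\Lambda_g$, whereas the paper first gets discreteness from finiteness of $(\Lambda_f+\Lambda_g)/\Lambda_g$ and then builds $N$ as an l.c.m.\ over a basis of $\Lambda_f$.
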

\begin{proof}We begin by proving that $\Lambda_f$ is discrete. Fix $j\in \{1,\ldots ,n\}$ and let $P_j(Z)$ be the minimum polynomial of $g_j(u)$ over $\mathbb{C}(f(u))$. For every $\lambda \in \Lambda _f$ we have that  $g_j(u+\lambda)$ is also a root of $P_j(Z)$. Therefore there are $\lambda_1,\ldots,\lambda_{\ell_j} \in \Lambda _f$ such that for every $\lambda\in \Lambda _f$ there is $i\in \{1,\ldots, \ell_j\}$ with $g_j(u+\lambda)=g_j(u+\lambda_i)$, and so $\lambda-\lambda_i \in \Lambda_{g_j}$. Thus, $(\Lambda_f+\Lambda_{g_j})/\Lambda_{g_j}$ has finite order $\ell_j\in \N^*$ and we can conclude since the order of $(\Lambda_f+\bigcap^n_{j=1} \Lambda_{g_j})/\bigcap^n_{j=1} \Lambda_{g_j}$ is less or equal than $\ell_1\cdots \ell_n$.

Now, we show that there exists $N\in \mathbb{N}\setminus \{0\}$ such that $N\Lambda _f\leq\Lambda _g$. We may assume that $\Lambda _f\neq \{0\}$. Take $\lambda \in \Lambda _f\setminus \{0\}$ and fix $j\in \{1,\ldots ,n\}$.
Since $g_j(u+k \lambda)$ is a root of $P_j(Z)$ for each $k\in \mathbb{Z}$, there exist $k_1,k_2\in \mathbb{Z}$, $k_2>k_1$, such that 
$g_j(u+k_1\lambda )=g_j(u+k_2\lambda)$. Define $N_j:=k_2-k_1\in \mathbb{N}\setminus \{0\}$, and consider $N_\lambda$ the l.c.m. of $N_1,\ldots ,N_n$. Let $\{\lambda_1,\ldots ,\lambda_m\}$ be a basis for $\Lambda _f$, and denote by $N$ the l.c.m. of the $N_{\lambda_1},\ldots, N_{\lambda_m}$. Clearly $N\Lambda _f\leq\Lambda _g$, as required.
This also shows that $\Lambda _g$ contains at least $\rank \Lambda _f$ linearly independent vectors over $\mathbb{R}$, so 
$\rank\Lambda _f \leq \rank \Lambda _g$.
The other assertion follows by symmetry, since if $g_1,\ldots ,g_n$ are algebraically independent over $\mathbb{C}$
then $f$ is algebraic over $\mathbb{C}(g)$.
\end{proof}

Next  result states that the rank is an invariant of the isomorphism class of a locally $\K$-Nash group. However, we point out that by the classification of two-dimensional locally $\K$-Nash groups provided in \cite{BDO2ALKNG}, the rank does not characterise the isomorphism class.

\begin{proposition}\label{different ranks}
\ Let $(\mathbb{K}^n,+,f)$ and $(\mathbb{K}^n,+,g)$ be isomorphic locally $\mathbb{K}$-Nash groups, for some $\mathbb{K}$-meromorphic maps 
$f,g:\mathbb{C}^n\dashrightarrow \mathbb{C}^n$ that admit an AAT.
If $(\mathbb{K}^n,+,f)$ and $(\mathbb{K}^n,+,g)$ are isomorphic then $\rank \Lambda _f=\rank \Lambda _g$.
\end{proposition}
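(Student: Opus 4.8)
The plan is to assemble the three results immediately preceding the statement, comparing three period groups. First I would invoke Proposition~\ref{compatibilityAut.1}: since $(\mathbb{K}^n,+,f)$ and $(\mathbb{K}^n,+,g)$ are isomorphic, there is some $\alpha\in\GL_n(\mathbb{K})$ such that $g\circ\alpha$ is algebraic over $\mathbb{K}(f)$, and hence, as $\mathbb{K}\subseteq\mathbb{C}$, also over $\mathbb{C}(f)$. The strategy is then to interpose $\Lambda_{g\circ\alpha}$ between $\Lambda_f$ and $\Lambda_g$, and to show $\rank\Lambda_f=\rank\Lambda_{g\circ\alpha}=\rank\Lambda_g$.

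Next I would settle discreteness. Because $(\mathbb{K}^n,+,f)$ and $(\mathbb{K}^n,+,g)$ are locally $\mathbb{K}$-Nash groups, by our standing convention the maps $f$ and $g$ satisfy condition $(*)$, so Lemma~\ref{discrete groups}$(2)$ guarantees that both $\Lambda_f$ and $\Lambda_g$ are discrete subgroups of $\mathbb{C}^n$. Applying Lemma~\ref{discrete groups}$(3)$ to $g$ together with $\alpha\in\GL_n(\mathbb{K})\subseteq\GL_n(\mathbb{C})$ then shows that $\Lambda_{g\circ\alpha}$ is discrete and that $\rank\Lambda_{g\circ\alpha}=\rank\Lambda_g$. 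This already reduces the proposition to the single equality $\rank\Lambda_f=\rank\Lambda_{g\circ\alpha}$.

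For that equality I would apply the ``moreover'' clause of Lemma~\ref{algebraicity of periods}, with $f$ and $g\circ\alpha$ playing the roles of the lemma's $f$ and $g$ respectively. The hypotheses to check are that $\Lambda_{g\circ\alpha}$ is discrete (established above), that $g\circ\alpha$ is algebraic over $\mathbb{C}(f)$ (established in the first step), and that the coordinate functions of $g\circ\alpha$ are algebraically independent \emph{over $\mathbb{C}$}. Granting the last point, the lemma delivers $\rank\Lambda_f=\rank\Lambda_{g\circ\alpha}$, which combined with the previous paragraph finishes the argument.

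The one step requiring genuine care, and the place I would expect a reader to pause, is the algebraic independence over $\mathbb{C}$. Since $g$ admits an AAT, its coordinates $g_1,\dots,g_n$ are algebraically independent over $\mathbb{K}$ by definition, and composition with the linear automorphism $\alpha$ is the pullback by an automorphism of the field of germs of meromorphic functions fixing the constants, hence preserves algebraic independence; so $g_1\circ\alpha,\dots,g_n\circ\alpha$ are independent over $\mathbb{K}$. For $\mathbb{K}=\mathbb{C}$ this is exactly what is needed. For $\mathbb{K}=\mathbb{R}$ one must upgrade from $\mathbb{R}$ to $\mathbb{C}$: if some nonzero $Q\in\mathbb{C}[X_1,\dots,X_n]$ satisfied $Q(g\circ\alpha)=0$, then $Q\overline{Q}\in\mathbb{R}[X_1,\dots,X_n]$ would be a nonzero relation with real coefficients, contradicting independence over $\mathbb{R}$; thus independence over $\mathbb{R}$ forces independence over $\mathbb{C}$. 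With this minor field-of-definition point dispatched, the remainder is a direct concatenation of the quoted lemmas.
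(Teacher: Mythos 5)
Your argument is correct and follows essentially the same route as the paper's proof: Proposition~\ref{compatibilityAut.1} to produce $\alpha$, Lemma~\ref{discrete groups} for discreteness and rank-invariance under $\alpha$, and Lemma~\ref{algebraicity of periods} to equate $\rank\Lambda_f$ with $\rank\Lambda_{g\circ\alpha}$. The only difference is that you explicitly verify the hypotheses of the ``moreover'' clause (algebraic independence of the coordinates of $g\circ\alpha$ over $\mathbb{C}$, including the passage from $\mathbb{R}$ to $\mathbb{C}$ via $Q\overline{Q}$), a point the paper leaves implicit.
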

\begin{proof}
By Proposition\,\ref{compatibilityAut.1}, there exists $\alpha \in \GL_n(\mathbb{K})$ such that $g  \alpha$ is algebraic 
over $\mathbb{K}(f)$.
We note that, by Lemma\,\ref{discrete groups}.$(4)$ and $(5)$, both $\Lambda _g$ and $\Lambda _f$ are discrete subgroups of 
$\mathbb{C}^n$.
By Lemma\,\ref{discrete groups}.$(6)$, $\Lambda _{g  \alpha }$ is also a discrete subgroup of $\mathbb{C}^n$,
with $\rank \Lambda _{g  \alpha}=\rank \Lambda _g$.
Now, by Lemma\,\ref{algebraicity of periods}, $\rank \Lambda _{g  \alpha}=\rank \Lambda _f$, so
$\rank \Lambda _f=\rank \Lambda _g$.
\end{proof}

Now, we show that the classification of connected abelian locally $\mathbb{K}$-Nash groups reduces to the classification of 
quotients of locally $\mathbb{K}$-Nash structures over the additive group $\mathbb{K}^n$ by discrete subgroups. This also happens in the complex analytic context (see \cite[\S C.3.Cor.3]{Lojasewicz}). Here we prove that the relevant induced maps are  $\mathbb{K}$-Nash.

\begin{proposition}\label{quotients} 
\emph{(I)} Every connected $n$-dimensional abelian locally $\mathbb{K}$-Nash group is isomorphic to some
$(\mathbb{K}^n,+, f)/\Gamma$, where $\Gamma$ is 
a discrete subgroup of $\mathbb{K}^n$.

\emph{(II)} Let  $\alpha: (\mathbb{K}^n,+,\phi)$ and  $(\mathbb{K}^n,+,\psi)$  be  locally $\mathbb{K}$-Nash. Let $\Gamma _1$ and $\Gamma _2$  be discrete subgroups of $\mathbb{K}^n$.  Then the following are equivalent.
\begin{enumerate}
\item The locally $\mathbb{K}$-Nash groups $(\mathbb{K}^n,+,\phi )/\Gamma _1$ and $(\mathbb{K}^n,+,\psi )/\Gamma _2$ are isomorphic, and  
\item there exists an isomorphism $\alpha: (\mathbb{K}^n,+,\phi) \rightarrow (\mathbb{K}^n,+,\psi)$ such that $\alpha (\Gamma_1)=\Gamma_2$.
\end{enumerate}
\end{proposition}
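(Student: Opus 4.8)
The plan is to reduce both parts to the simply connected case treated by Theorem \ref{T1}, and then to read off the local $\mathbb{K}$-Nash data in the charts induced on the quotients, so that the chart-level criterion of Proposition \ref{compatibilityAut} applies.

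For (I), I would start from a connected $n$-dimensional abelian locally $\mathbb{K}$-Nash group $G$ and pass to its analytic universal cover $\pi:\widetilde{G}\to G$. By Proposition \ref{covering}, $\widetilde{G}$ can be equipped with a locally $\mathbb{K}$-Nash structure making $\pi$ a locally $\mathbb{K}$-Nash homomorphism. Since $\widetilde{G}$ is simply connected, abelian and $n$-dimensional, Theorem \ref{T1} provides a locally $\mathbb{K}$-Nash isomorphism $\widetilde{G}\cong(\mathbb{K}^n,+,f)$; transporting $\Gamma:=\ker\pi$ along it yields a discrete subgroup of $\mathbb{K}^n$. Then $\pi$ factors as $\pi=\overline{\pi}\circ q$, where $q:(\mathbb{K}^n,+,f)\to(\mathbb{K}^n,+,f)/\Gamma$ is the projection (locally $\mathbb{K}$-Nash by Remark \ref{quotients by normal subgroups}) and $\overline{\pi}$ is an analytic group isomorphism onto $G$. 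To see $\overline{\pi}$ is a locally $\mathbb{K}$-Nash isomorphism, I would observe that near the identity $q$ and $\pi$ are local diffeomorphisms and, by the construction in Remark \ref{quotients by normal subgroups}, the chart of the quotient at the identity is the one induced by $f$; in these charts $\overline{\pi}$ is the identity map, hence locally $\mathbb{K}$-Nash, and so is $\overline{\pi}^{-1}$.

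For (II), the implication $(2)\Rightarrow(1)$ is straightforward: an isomorphism $\alpha:(\mathbb{K}^n,+,\phi)\to(\mathbb{K}^n,+,\psi)$ with $\alpha(\Gamma_1)=\Gamma_2$ descends to a bijective group homomorphism $\overline{\alpha}$ between the quotients, and since the projections and $\alpha$ are locally $\mathbb{K}$-Nash, reading $\overline{\alpha}$ off in the induced charts near the identity shows that $\overline{\alpha}$ and $\overline{\alpha}^{-1}$ are locally $\mathbb{K}$-Nash. For $(1)\Rightarrow(2)$, write $q_1,q_2$ for the two projections; since each $\Gamma_i$ is discrete and $\mathbb{K}^n$ is simply connected, $q_1,q_2$ are universal covering homomorphisms. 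Given a locally $\mathbb{K}$-Nash isomorphism $\beta$ of the quotients, the lifting property of $q_2$ applied to the analytic map $\beta\circ q_1$ produces an analytic lift $\alpha:\mathbb{K}^n\to\mathbb{K}^n$ with $q_2\circ\alpha=\beta\circ q_1$; normalising $\alpha(0)=0$ and comparing the two lifts $(u,v)\mapsto\alpha(u+v)$ and $(u,v)\mapsto\alpha(u)+\alpha(v)$ of the map $(u,v)\mapsto\beta\big(q_1(u)+q_1(v)\big)$, uniqueness of lifts forces $\alpha$ to be a group homomorphism, and as $\beta$ is an isomorphism, $\alpha$ is an analytic automorphism of $(\mathbb{K}^n,+)$, i.e.\ $\alpha\in\GL_n(\mathbb{K})$. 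From $q_2\circ\alpha=\beta\circ q_1$ and $\ker q_i=\Gamma_i$ one gets $\alpha(\Gamma_1)\subseteq\Gamma_2$, and applying the same argument to $\beta^{-1}$ and $\alpha^{-1}$ gives $\alpha(\Gamma_1)=\Gamma_2$.

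It then remains to verify that this $\alpha$ is a locally $\mathbb{K}$-Nash isomorphism, and this is the step I expect to be the main obstacle. The point is to match the charts carefully: near the identity the charts of $(\mathbb{K}^n,+,\phi)/\Gamma_1$ and $(\mathbb{K}^n,+,\psi)/\Gamma_2$ are, by the construction of Remark \ref{quotients by normal subgroups}, exactly those induced by $\phi$ and $\psi$, and there $q_1,q_2$ are local diffeomorphisms. Expressing $\beta$ in these charts, its locally $\mathbb{K}$-Nash-ness means precisely that $\psi\circ\alpha\circ\phi^{-1}$ coincides near $0$ with a $\mathbb{K}$-Nash map, that is, $\psi\circ\alpha$ is algebraic over $\mathbb{K}(\phi)$ on a neighbourhood of the identity. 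This is exactly condition $(2)$ of Proposition \ref{compatibilityAut} for the continuous isomorphism $\alpha:(\mathbb{K}^n,+,\phi)\to(\mathbb{K}^n,+,\psi)$, so that proposition yields that $\alpha$ is a locally $\mathbb{K}$-Nash isomorphism, completing $(1)\Rightarrow(2)$. The delicate bookkeeping is entirely in translating the local $\mathbb{K}$-Nash condition on $\beta$ downstairs into the algebraicity of $\psi\circ\alpha$ over $\mathbb{K}(\phi)$ upstairs; once that identification is made, Proposition \ref{compatibilityAut} does the rest.
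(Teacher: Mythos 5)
Your argument is correct, and its overall shape (reduce to the universal cover, produce a linear lift $\alpha\in\GL_n(\mathbb{K})$ with $q_2\circ\alpha=\beta\circ q_1$, then verify that $\alpha$ is a locally $\mathbb{K}$-Nash isomorphism) is the same as the paper's; part (I) and the direction $(2)\Rightarrow(1)$ match the paper's proof essentially verbatim. Where you diverge is in the two key steps of $(1)\Rightarrow(2)$. First, you construct $\alpha$ by lifting $\beta\circ q_1$ through the covering $q_2$ and using uniqueness of lifts to force additivity, whereas the paper builds $\alpha$ Lie-theoretically: it takes an analytic section $s_2$ of $\pi_2$, forms the differential $d_e(s_2\circ\beta\circ\pi_1)$, and integrates it to a Lie group homomorphism; both constructions are standard and yield the same $\alpha$. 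Second, and more substantively, for the Nashness of $\alpha$ the paper avoids any chart computation: it invokes Proposition~\ref{covering} (projections \emph{and} analytic sections of the covering are locally $\mathbb{K}$-Nash) to write $\alpha|_U=s_2\circ\beta\circ\pi_1|_U$ as a composition of locally $\mathbb{K}$-Nash maps, and then applies Proposition~\ref{lochomo}; you instead unwind the induced charts of Remark~\ref{quotients by normal subgroups} to translate the Nashness of $\beta$ into the algebraicity of $\psi\circ\alpha$ over $\mathbb{K}(\phi)$ near the identity and feed that into Proposition~\ref{compatibilityAut}. Your route costs a little more bookkeeping (one must keep track of the translated charts $u\mapsto\phi(u+a)$ implicit in the notation $(\mathbb{K}^n,+,\phi)$), but it buys the two-sided conclusion in one stroke, since Proposition~\ref{compatibilityAut} directly certifies that $\alpha$ is an isomorphism of locally $\mathbb{K}$-Nash groups, whereas the paper's use of Proposition~\ref{lochomo} only gives that $\alpha$ is a locally $\mathbb{K}$-Nash homomorphism and implicitly relies on symmetry (or linearity of $\alpha^{-1}$) for the inverse.
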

\begin{proof}
(I) Let $G$ be a connected $n$-dimensional abelian locally $\mathbb{K}$-Nash group.
By Proposition\,\ref{covering}, $\widetilde{G}$ is a a simply connected $n$-dimensional abelian locally $\mathbb{K}$-Nash group, so we 
can apply Theorem\,\ref{T1}.

(II) We begin with (1) implies (2).
Let $\pi _1$ and $\pi _2$ denote the projections of $\mathbb{K}^n$ onto $\mathbb{K}^n/\Gamma _1$ and 
$\mathbb{K}^n/\Gamma _2$, respectively.
Let $\beta$ be the locally $\mathbb{K}$-Nash isomorphism from $(\mathbb{K}^n,+,\phi )/\Gamma _1$ to $(\mathbb{K}^n,+,\psi )/\Gamma _2$.
Take an open neighbourhood of the identity $U\sub \mathbb{K}^n$ and an analytic section 
$s_2 :\beta (\pi _1(U))\rightarrow \mathbb{K}^n$ such that $\pi _2  s_2= Id$ and $e\in s_2(\beta (\pi _1(U)))$.
Then, the map $d_e(s_2  \beta   \pi _1): T_e\,\mathbb{K}^n \rightarrow T_e\,\mathbb{K}^n$ is trivially a homomorphism of Lie 
algebras. 
Hence,  there exists a homomorphism of Lie groups, 
$\alpha :\mathbb{K}^n\rightarrow \mathbb{K}^n$,
such that $d_e \alpha =d_e (s_2  \beta   \pi _1)$. 
By symmetry, changing $\beta$ by $\beta ^{-1}$, we get that $\alpha$ is an isomorphism of Lie groups.
So $\alpha \in \GL_n(\mathbb{K})$ and $\beta   \pi _1=\pi _2  \alpha$.
Now, we note that both $\alpha$ and $\beta$ are injective maps and, hence, 
$\ker(\beta   \pi _1)=\Gamma _1$, $\ker(\pi _2  \alpha)=\alpha ^{-1}(\Gamma _2)$ and $\alpha (\Gamma _1)=\Gamma _2$.
It only remains to show that $\alpha$ is a locally $\mathbb{K}$-Nash map.
By Proposition\,\ref{covering}, the maps $\pi _1$ and $s_2$ are locally $\mathbb{K}$-Nash maps, so 
$\alpha |_{U}=s_2  \beta   \pi_1|_U$ is a locally $\mathbb{K}$-Nash map.
Thus, by Proposition\,\ref{lochomo}, $\alpha$ is a locally $\mathbb{K}$-Nash homomorphism.

For (2) implies (1), let  $\alpha: (\mathbb{K}^n,+,\phi) \rightarrow (\mathbb{K}^n,+,\psi)$ such that $\alpha (\Gamma_1)=\Gamma_2$, and 
$$
\beta :\mathbb{K}^n/\Gamma _1\rightarrow \mathbb{K}^n/\Gamma _2:u+\Gamma _1\mapsto \alpha (u)+\Gamma _2.
$$
By our assumption on $\alpha$, $\beta$ is an analytic isomorphism.
It only remains to show that $\beta$ is a locally $\mathbb{K}$-Nash map.
Take a sufficiently small open neighbourhood of the identity $U$ of $\mathbb{K}^n/\Gamma _1$ and an analytic section 
$s_1 :U\rightarrow \mathbb{K}^n$ such that $\pi _1  s_1= Id$.
We note that $\beta |_{U}=\pi _2  \alpha   s_1|_{U}$.
By Proposition\,\ref{covering}, the maps $s_1$ and $\pi _2$ are locally $\mathbb{K}$-Nash maps, so $\beta |_{U}$ is a locally 
$\mathbb{K}$-Nash map.
By Proposition\,\ref{lochomo}, $\beta$ is a locally $\mathbb{K}$-Nash map.
\end{proof}

We finish the paper by proving   one of our main results.

\begin{theorem}\label{algebraic category covering}
Every simply connected $n$-dimensional abelian locally $\mathbb{C}$-Nash group is the universal 
covering of some (abstract) abelian complex irreducible algebraic group of dimension $n$.
\end{theorem}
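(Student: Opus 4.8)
The plan is to combine Theorem\,\ref{T1} with the extension result Fact\,\ref{T2} and to read off a commutative algebraic group from the function field they produce. Let $G$ be a simply connected $n$-dimensional abelian locally $\C$-Nash group. By Theorem\,\ref{T1}, and by the proof of that theorem (which applies Fact\,\ref{T2}), we may assume $G=(\C^n,+,\psi)$, where $\psi=(\psi_1,\ldots,\psi_n)$ admits an AAT and Fact\,\ref{T2} furnishes an additional series $\psi_0$ such that the field $K:=\C(\psi_0,\psi_1,\ldots,\psi_n)$ has transcendence degree $n$, is closed under the substitutions $F(u)\mapsto F(u+v)$ and $F(u)\mapsto F(-u)$ as rational functions of the two variable blocks, and has each $\psi_i$ a quotient of entire functions on $\C^n$. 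It then suffices to exhibit $(\C^n,+,\psi)$ as the universal covering of an abelian irreducible algebraic group of dimension $n$.

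The heart of the argument is to build that algebraic group out of $K$. Let $X\sub\C^{n+1}$ be the irreducible affine variety with function field $K$, namely the Zariski closure of the image of $\Psi:=(\psi_0,\ldots,\psi_n):\C^n\dashrightarrow\C^{n+1}$; it has dimension $n$ because $\psi_1,\ldots,\psi_n$ are algebraically independent. The closure of $K$ under $F(u)\mapsto F(u+v)$ and $F(u)\mapsto F(-u)$ transports the group law of $(\C^n,+)$ to rational maps $X\times X\dashrightarrow X$ and $X\dashrightarrow X$ which satisfy the group axioms on dense open sets, i.e.\ a commutative birational group law on $X$ (this is the classical mechanism by which a system of functions admitting an algebraic addition theorem encodes a commutative group law). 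Invoking Weil's theorem on birational group laws, one obtains an abstract, abelian, irreducible algebraic group $A$ of dimension $n$ --- in the sense recalled before Lemma\,\ref{C-Nash structure} --- birationally equivalent to $X$, for which the rational map $\Psi$ extends, off its indeterminacy locus by translation with the group law, to an everywhere-defined analytic homomorphism $\Psi:\C^n\to A(\C)$ with $\Psi(0)=e_A$.

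It remains to identify $\Psi$ as the required universal covering and to check that it is locally $\C$-Nash. Condition (*) for $\psi$ makes $\psi$, and thus $\Psi$, a local analytic diffeomorphism at the relevant point; since $\Psi$ is a homomorphism of connected complex Lie groups of equal dimension $n$, its differential has constant rank and $\Psi:\C^n\to A(\C)$ is therefore an analytic covering homomorphism. Its kernel is the group of periods $\Lambda_\psi$, which is discrete by Lemma\,\ref{discrete groups}, so $A(\C)\iso\C^n/\Lambda_\psi$ is a connected abelian complex Lie group of dimension $n$; as $\C^n$ is simply connected, $\Psi$ is its universal covering. Equip $A$ with the locally $\C$-Nash structure of Lemma\,\ref{C-Nash structure}. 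Taking on the source the chart $\psi$ near a point where condition (*) makes it a diffeomorphism, and on $A$ a chart obtained from a Zariski chart with coordinates $\psi_0,\ldots,\psi_n$ by projecting onto the $(\psi_1,\ldots,\psi_n)$-coordinates, the coordinate expression of $\Psi$ is simply $\psi\circ\psi^{-1}=\id$, hence $\C$-Nash; by Proposition\,\ref{lochomo}, $\Psi$ is a locally $\C$-Nash homomorphism. Finally, by the uniqueness part of Proposition\,\ref{covering}, $(\C^n,+,\psi)$ is locally $\C$-Nash isomorphic to the universal covering $\widetilde{A}$, and since $(\C^n,+,\psi)\iso G$, the group $G$ is the universal covering of the abelian complex irreducible algebraic group $A$ of dimension $n$, as required.

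The step I expect to be the main obstacle is the passage from the rational group law on $X$ to a genuine algebraic group $A$. The closure properties of $K$ only supply multiplication and inversion as rational maps defined on dense opens, so one must pass from this birational group law to an honest algebraic group (Weil's group-chunk construction) and verify that the result is an abstract algebraic group in the precise sense used here, with compatible Zariski charts and locally rational operations. By contrast, once $A$ is in hand, checking that $\Psi$ is a covering with discrete period kernel and that it is locally $\C$-Nash is routine from the earlier results of the paper.
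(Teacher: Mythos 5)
Your outline is the paper's own: reduce to $(\C^n,+,\psi)$ via Theorem\,\ref{T1} and Fact\,\ref{T2}, pass to the $n$-dimensional irreducible affine variety cut out by the algebraic relation among $\psi_0,\ldots,\psi_n$, apply Weil's theorem on birational group laws to produce an abelian irreducible algebraic group, and then check that the induced map is a locally $\C$-Nash covering. However, two steps are asserted where the paper has to do real work, and one of them is stated in a form that skips a necessary normalization. First, the claim that closure of $\C(\psi_0,\ldots,\psi_n)$ under $F(u)\mapsto F(u+v)$ and $F(u)\mapsto F(-u)$ yields a birational group law satisfying Weil's axioms (G1) and (G2) is exactly the paper's Step 1, and it is not automatic: identities such as $f(f(x,y),g(x))=y$ are only known a priori on the image of $\Phi\times\Phi$, which is not Zariski open, and one must argue (as the paper does, using that this image has dimension $2n$ inside the irreducible $V\times V$) that the locus where they hold exhausts the relevant Zariski open set. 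You flag this region as the main obstacle but do not carry out the verification; note also that the obstacle is not Weil's theorem itself (a citation) but checking its hypotheses.

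Second, the assertion that $\Psi=(\psi_0,\ldots,\psi_n)$ ``extends, off its indeterminacy locus by translation with the group law, to an everywhere-defined analytic homomorphism $\Psi:\C^n\to A(\C)$ with $\Psi(0)=e_A$'' hides the delicate part. The $\psi_i$ are meromorphic and need not be defined at $0$, and the map one actually has is $h=(\rho^{-1}\omega)\circ\Psi$, defined only on a dense connected open set $U$ and satisfying $h(x+y)=h(x)h(y)$ there; there is no reason for it to send $0$ to the identity unless $0\in U$. The paper therefore chooses a point $a$ with $a,2a,-a\in U$ at which $\psi$ is a local diffeomorphism, sets $s(-a+u):=h(a)^{-1}h(u)$, verifies by hand that $s$ is a local isomorphism, and only then lifts $s$ to the universal cover by the monodromy theorem; your ``extension by translation'' can be made to work but its well-definedness is an argument of comparable length. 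Relatedly, the coordinate expression of the covering map is not literally $\psi\circ\psi^{-1}=\id$: the chart on the algebraic group passes through the birational map $\omega$ onto Weil's group-chunk, so what one must show (the paper's Step 3.4) is that $z\mapsto\pi(\omega(\psi_0(\psi^{-1}(z)),z))$ is algebraic over $\C(\id)$, using that $\psi_0\circ\psi^{-1}$ is. None of these gaps is fatal, but each requires an argument the proposal omits.
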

\begin{proof}
By Theorem\,\ref{T1}, we may assume that the locally $\mathbb{C}$-Nash group is of the form $(\mathbb{C}^n,+,\phi)$, for some meromorphic map 
$\phi =(\phi _1,\ldots ,\phi _n):\mathbb{C}^n \dashrightarrow \mathbb{C}^n$ admitting an AAT.
Moreover, by Fact\,\ref{T2}, we may assume that there exist a 
meromorphic function $\phi_0:\mathbb{C}^n \dashrightarrow \mathbb{C}$ and $R\in \mathbb{C}[X_0,\ldots, X_n]\setminus \{0\}$, irreducible,
such that $R(\phi_0,\ldots,\phi_n)= 0$ and, for each $\phi_i$, $i\in \{0,\dots,n\}$, there exist 
$R_i \in \mathbb{C}(X_{0}, \ldots, X_{2n+1})$ and $Q_i \in \mathbb{C}(X_{0},\ldots, X_{n})$ satisfying
$$
(\ast)\left\{ 
\begin{array}{l}
\phi_i(u+v)  = R_i (\phi_0(u),\ldots, \phi_n (u), \phi_0(v), \ldots,\phi_n(v)) \quad \text{and} \vspace{0.2cm}\\
\phi_i(-u)  = Q_i(\phi_0(u),\ldots,\phi_n(u)).
\end{array}
\right.
$$
Let $F$ be the field generated by the coefficients of $R,R_0,\ldots,R_n,Q_0,\ldots,Q_n$ and let $V\sub \mathbb{C}^{n+1}$ be  the zero set of $R$. Let us denote by $X$ the analytic subset of $\mathbb{C}^n$ such that $\mathbb{C}^n\setminus X$ is the set of points where all 
$\phi_0,\ldots, \phi_n$ are defined and let us consider the natural map
\[
\Phi:\mathbb{C}^n\setminus X \rightarrow V: u \mapsto (\phi_0(u),\ldots,\phi_n(u)) .
\]
Note that, since $\phi_1,\ldots,\phi_n$ are algebraically independent over $\mathbb{C}$, the affine irreducible algebraic 
subset $V$ of $\mathbb{C}^{n+1}$ is $n$-dimensional.

\smallskip
\emph{Step $1$}. Our  first aim is to show that $V$ is a pre-group in the sense of Weil  \cite[\S\,I.1]{Weil}, via the group $(\mathbb{C}^n,+,\phi)$. That is, we will prove that there is a rational map $f:V\times V \dashrightarrow V$ such that:

\smallskip
\noindent{(G1)} If $x,y$ are independent generic points of $V$ over  the field $F$ and $z=f(x,y)$ then 
\[
F(x)\sub F(z,y) \quad \text{ and  } \quad F(y)\sub F(x,z).
\]
\smallskip
{(G2)} If $x,y,t$ are independent generic points of $V$ over the field $F$ then 
\[
f(f(x,y),t)=f(x,f(y,t)). 
\]

Indeed, we consider 
$$
f:V\times V  \dashrightarrow  \C^{n+1}: (x,y)  \mapsto  (R_0(x,y),\ldots,R_n(x,y))$$
and
$$g:V  \dashrightarrow  \C^{n+1}:  x  \mapsto  (Q_0(x),\ldots,Q_n(x)),$$
which are rational maps over $F$. As in the meromorphic context, we use the symbol $\dashrightarrow$ to stress that these functions are not global, they are defined off an algebraic subset. We first note that the image of both $f$ and $g$ is contained in $V$. For, take $D$ the Zariski open subset of $V\times V$ of all points $(x,y)$ where $f$ is defined. The preimage $f^{-1}(V)$ is a Zariski closed subset of $D$, so there is an algebraic subset $Y$ of $V\times V$ such that $f^{-1}(V)=D\cap Y$. Since $D\cap Y$ contains $\Phi(\mathbb{C}\setminus X)\times \Phi(\mathbb{C}\setminus X)$ by $(\ast)$, we deduce that $Y$ has dimension $2n$ and, therefore, it equals the irreducible $V\times V$, so that $f^{-1}(V)=D\cap Y=D$, as required. For $g$ the argument is similar.

Let us show that $f$ satisfies property {G1}. Indeed, take $D_0$ the Zariski open subset of $V\times V$ of all points $(x,y)$ where $f(f(x,y),g(x))$ is defined and consider the subset 
of $D_0$ given by
\[
\{(x,y)\in D_0 \suchthat f(f(x,y),g(x))=y \}.
\]
The set is Zariski closed in $D_0$ and arguing as above we get that it equals $D_0$. This shows that $F(y)\sub F(x,z)$ and, by symmetry, we get that $F(x)\sub F(z,y)$, as required.
The proof of {G2} is similar.

\smallskip
\emph{Step} 2.
By \cite[Prop.\,4]{Weil}, there exists a birational equivalence $\omega:V\dashrightarrow W$, where $W$ is an affine variety 
group-chunk (see the definition just before \cite[Prop.\,3]{Weil}).
By \cite[Thm., p.$375$]{Weil} and the beginning of the proof in \cite[\S\,III.6]{Weil}, there exists an algebraic 
group $G$ with a Zariski chart of the form $\rho:W_1\rightarrow W$, such that:
\begin{enumerate}
 \item We have the following diagram:
\begin{displaymath}
    \xymatrix{ \mathbb{C}^n\setminus X \ar[d]^\Phi & \hspace{1cm}W_1\sub G \ar[d]^\rho \\
               V \ar[r]^\omega & W \ar@/^/@{.>}[u]^{\rho^{-1}} }.
\end{displaymath}
\item The following equalities hold:
\begin{align*}
f(x,y) & = (\rho^{-1}  \omega)^{-1}\big( (\rho^{-1}  \omega)(x)\cdot(\rho^{-1}  \omega)(y) \big)\\
g(x) & =(\rho^{-1}  \omega)^{-1}\big([(\rho^{-1}  \omega)(x)]^{-1}\big),
\end{align*}
for all points $x,y\in V$ on which the involved functions can be evaluated.
\end{enumerate}
We also note that $G$ is abelian because the algebraic subset of the irreducible $G\times G$ which consists in commuting elements does 
contain a subset of dimension $2n$. 

\emph{Step} 3.
Now, our purpose is to define a local isomorphism between $\mathbb{C}^n$ and $G$. 
Let $Z$ be the algebraic subset of $V$ for which the birational maps $\omega$ and $g$ are well-defined maps in $V\setminus Z$. 
Consider also the analytic subset $\Phi^{-1}(Z)$ of $\mathbb{C}^n\setminus X$ and denote 
$U:=(\mathbb{C}^n\setminus X)\setminus \Phi^{-1}(Z)$, which is an open dense subset of $\mathbb{C}^n\setminus X$. 
We also note that $U$ is connected, since it is the complement in $\mathbb{C}^n$ of a (thin) analytic subset. 
Therefore, we can consider the following analytic map:
\[
h:=(\rho^{-1}  \omega)   \Phi:U \rightarrow W_1\sub G.
\]

\emph{Step} 3.1.
We claim that the map $h$ satisfies that $h(x+y)=h(x)h(y)$, for all $x,y\in U$ with $x+y\in U$. Again, the set $D_1:=\{(x,y)\in U\times U: x+y\in U\}$ is an open subset of the connected $U\times U$ whose complement is  the preimage of an analytic subset, hence analytic. Thus, $D_1$ is connected. On the other hand, 
the analytic subset $\{(x,y)\in D_1:h(x+y)=h(x)h(y)\}$ has non-empty interior in $D_1$ because of the above relation (2) of $f$ and 
the group operation of $G$.
Since $D_1$ is connected, they must coincide, as required.
Similarly, we have that $h(-x)=h(x)^{-1}$, for all $x\in U$ with $-x\in U$.

\emph{Step} 3.2. As $U$ is dense in $\mathbb{C}^n$ and the maps $\mathbb{C}^n\rightarrow \mathbb{C}^n:x\mapsto 2x$ and 
$\mathbb{C}^n\rightarrow \mathbb{C}^n:x\mapsto -x$ are homeomorphisms, 
the sets $\{x\in \mathbb{C}^n \suchthat 2x\in U\}$ and $\{x\in \mathbb{C}^n \suchthat -x\in U\}$ are dense subsets of $\mathbb{C}^n$. 
The subset of points of $U$ where $\phi$ is a local analytic diffeomorphism is a proper analytic subset of $U$. 
Thus, there exists $a\in U$ such that $2a\in U$, $-a\in U$ and $\phi$ is a local diffeomorphism at $a$. 
It follows that there exists an open semialgebraic neighbourhood $U_1$ of $a$ for which we have that $U_1+U_1\sub U$ and such that 
$\phi|_{U_1}$ is an analytic diffeomorphism onto its image. Consider the following analytic diffeomorphism onto its image,
\[
\begin{array}{rcl}
s: -a+U_1 & \rightarrow & G\\
-a+u  &  \mapsto &  h(a)^{-1}h(u).
  \end{array}
\]

\emph{Step} 3.3. We claim that $s$ is a local isomorphism. 
Indeed, given points $u,v\in U_1$ for which $(-a+u)+(-a+v)\in -a+U_1$, we have that 
$-a+u+v\in U_1$ and by definition
\begin{align*}
s\big((-a+u)+(-a+v) \big) & =s\big(-a+(-a+u+v) \big)\\
& =h(a)^{-1}h(-a+u+v).
\end{align*}
As $-a\in U$ and $u+v\in U_1+U_1\sub U$, it follows from Step 3.1 that
\begin{align*}
s\big((-a+u)+(-a+v) \big) &=h(a)^{-1}\big(h(-a)h(u+v)\big)\\
& =h(a)^{-1}\big(h(-a)h(u)h(v)\big)\\
& =h(a)^{-1}\big(h(a)^{-1}h(u)h(v)\big)\\
& =h(a)^{-1}h(u)h(a)^{-1}h(v)\\
& =s(-a+u)s(-a+v),
\end{align*}
as required. 

\emph{Step} 3.4.
By Corollary\,\ref{C-Nash structure}, we have that $G$ has a locally $\mathbb{C}$-Nash structure and our aim now is to show that $s$ 
is a locally $\mathbb{C}$-Nash map. 
We explicitly define charts of $G$ and $(\mathbb{C}^n,+,\phi)$ to work with. 
Note that $h(a)^{-1}W_1\rightarrow W:y \mapsto \rho(h(a)y)$ is also a Zariski chart. 
In fact, since $h(a)\in W_1$, it is a Zariski chart of the identity. 
By Corollary\,\ref{C-Nash structure}, there exist an open subset $W_2$ of $W_1$, with $h(a)\in W_2$, and a projection $\pi$ such that 
\[
h(a)^{-1}W_2\rightarrow \pi(\rho(W_2)):y\mapsto \pi(\rho(h(a)y))
\]
is a chart of the identity of the locally $\mathbb{C}$-Nash structure of $G$, as required. 
We also consider the following chart of $0$ of $(\mathbb{C}^n,+,\phi)$,
\[
-a+U_1\rightarrow \phi(U_1): y \mapsto \phi(a+y),
\]
where we have shrunk $U_1$ so that $s(-a+U_1)\sub h(a)^{-1}W_2$.

To show that the map $s$ is a locally $\mathbb{C}$-Nash map, using the above charts, it reduces to show that the map
\[
\phi(U_1)\rightarrow \pi\big(\rho(W_2)\big): z \mapsto \pi(\rho(h(\phi^{-1}(z)))) 
\]
is algebraic over $\mathbb{C}(\id)$. 
By definition of the map $h$, we have that
\[
\pi(\rho(h(\phi^{-1}(z))))=\pi(\omega(\phi_0(\phi^{-1}(z)),z)). 
\]
On the other hand, $\phi_0$ is algebraic over $\mathbb{C}(\phi)$ and, hence, $\phi_0  \phi^{-1}$ is algebraic over $\mathbb{C}(\id)$. 
Thus, since $\omega$ is a rational map and $\pi$ is a projection, we deduce that $s$ is a locally $\mathbb{C}$-Nash map, as required.

\emph{Step} 3.5.
Finally, 
 by the monodromy theorem the local isomorphism $s$ can be lifted to an analytic global isomorphism 
$S:(\mathbb{C}^n,+,\phi)\rightarrow \widetilde{G}$. 
 Thus showing   that $S$ is a locally $\mathbb{C}$-Nash map reduces to check that the map $s$ is a locally $\mathbb{C}$-Nash 
(see Proposition\,\ref{lochomo}), so we are done.
\end{proof} 

From the latter theorem, Lemma\,\ref{C-Nash structure}  and Proposition\,\ref{covering} we get the following.

\begin{cor}\label{equalcat} The category of simply connected  abelian locally $\mathbb{C}$-Nash groups coincides with  that of universal coverings of the abelian complex irreducible algebraic groups.
\end{cor}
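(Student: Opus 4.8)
The plan is to realise both classes as full subcategories of the category of locally $\mathbb{C}$-Nash groups, with locally $\mathbb{C}$-Nash homomorphisms as morphisms, and then to check that they have the same objects up to isomorphism; since both subcategories are full, coincidence on objects forces coincidence of the categories. To that end I would establish the two required inclusions separately.

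For the inclusion of simply connected abelian locally $\mathbb{C}$-Nash groups into universal coverings of abelian complex irreducible algebraic groups, I would simply invoke Theorem\,\ref{algebraic category covering}: an $n$-dimensional object $G$ of the first category is, by that theorem, the universal covering of some abelian complex irreducible algebraic group of dimension $n$, hence an object of the second. For the reverse inclusion I would start with an abelian complex irreducible algebraic group $H$. By Lemma\,\ref{C-Nash structure} the group $H$ carries a locally $\mathbb{C}$-Nash group structure, and by Proposition\,\ref{covering} its analytic universal cover $\widetilde{H}$ inherits a locally $\mathbb{C}$-Nash structure---unique up to isomorphism---for which the covering map $\pi:\widetilde{H}\to H$ is a locally $\mathbb{C}$-Nash homomorphism. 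Since $H$ is connected and abelian, $\widetilde{H}$ is abelian (its Lie algebra is that of $H$), and it is simply connected by construction; thus $\widetilde{H}$ is a simply connected abelian locally $\mathbb{C}$-Nash group, i.e.\ an object of the first category. The uniqueness clause of Proposition\,\ref{covering} is precisely what guarantees that this assignment is well defined, so that the two classes are genuinely identified rather than merely related through a non-canonical choice of structure.

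The only point requiring attention---and the closest thing to a genuine obstacle---is the matching of morphisms: I must confirm that a group homomorphism between two such objects is a morphism in one category exactly when it is one in the other. This becomes immediate once both are taken as \emph{full} subcategories inside the locally $\mathbb{C}$-Nash category, since then the morphisms on both sides are by definition the locally $\mathbb{C}$-Nash homomorphisms between the underlying objects; hence the identification of objects obtained above upgrades to an identification of categories, which is the assertion of the corollary.
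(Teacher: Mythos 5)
Your proposal is correct and follows essentially the same route as the paper, whose entire proof consists of citing exactly the three results you invoke: Theorem\,\ref{algebraic category covering} for one inclusion, and Lemma\,\ref{C-Nash structure} together with Proposition\,\ref{covering} for the other. Your additional remarks on the abelianness of the universal cover and on fullness of the subcategories are sensible elaborations of details the paper leaves implicit.
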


\begin{rem} In the proof of Theorem\,\ref{algebraic category covering}, if $f$ is invariant under complex conjugation ($\overline{f(\overline{x})}=f(x)$ for all $x\in V\times V$) then the algebraic group we obtain is defined over $\R$. Therefore, we also have an alternative proof -- in the abelian case --  to the result in \cite{Hrushovski_Pillay}  mentioned in the Introduction.
\end{rem}

\bibliographystyle{plain}
\bibliography{biblioLCN}
\end{document}